\numberwithin{subsection}{section} 
\newtheorem{theorem}{Theorem}[section]
\newtheorem{corollary}[theorem]{Corollary}
\newtheorem{proposition}[theorem]{Proposition}
\newtheorem{lemma}[theorem]{Lemma}
\newtheorem{lem}[theorem]{}
\theoremstyle{definition}
\newtheorem{definition}[theorem]{Definition}
\theoremstyle{remark}
\newtheorem{remark}[theorem]{Remark}
\newtheorem{example}[theorem]{Example}
\newcommand{\blem}{\begin{lem} \rm}
\newcommand{\elem}{\end{lem}}
\newcommand{\N}{\mathbb{N}}
\newcommand{\R}{\mathbb
{R}}
\renewcommand{\H}{\mathbb{H}}
\newcommand{\C}{\mathbb{C}}
\newcommand{\Z}{\mathbb{Z}}
\newcommand{\bA}{\mathbb{A}}
\renewcommand{\P}{\mathbb{P}}
\newcommand{\on}{\operatorname}
\renewcommand{\Vert}{\on{Vert}}
\newcommand{\dist}{\on{dist}}
\newcommand\bra[1]{ < \kern-.7ex {#1} \kern-.7ex >} 
\newcommand{\Edge}{\on{E}}
\newcommand{\Hom}{ \on{Hom}}
\newcommand{\ssm}{\kern-.5ex \smallsetminus \kern-.5ex}
\newcommand\dirac{/\kern-1.2ex\partial} 
\newcommand\qu{/\kern-.7ex/} 
\newcommand\lqu{\backslash \kern-.7ex \backslash} 
\newcommand\dr{r_+ \kern-.7ex - \kern-.7ex r_-}
\newcommand{\labell}\label
\newcommand{\lra}{\longrightarrow}
\newcommand{\ol}{\overline}
\newcommand\col{{\on{col}}}
\newcommand\WW{\on{W}}
\newcommand\ul{\underline}
\renewcommand\Im{\on{Im}}
\newcommand\bdefn{\begin{definition}}
\newcommand\edefn{\end{definition}}
\newcommand\bea{\begin{eqnarray*}}
\newcommand\eea{\end{eqnarray*}}
\newcommand\bcv{\left[ \begin{array}{r} }
\newcommand\ecv{\end{array} \right] }
\newcommand\bma{\left[ \begin{array} }
\newcommand\ema{\end{array} \right]}
\newcommand\ben{\begin{enumerate}}
\newcommand\een{\end{enumerate}}
\newcommand\bex{\begin{example}}
\newcommand\bsj{\left\{ \begin{array}{rrr} }
\newcommand\esj{\end{array} \right\}}
\newcommand\eex{\end{example}}
\newcommand\sx{*\kern-.5ex_X}
\newcommand{\ainfty}{{$A_\infty$\ }}
\def\mathunderaccent#1{\let\theaccent#1\mathpalette\putaccentunder}
\def\putaccentunder#1#2{\oalign{$#1#2$\crcr\hidewidth \vbox
to.2ex{\hbox{$#1\theaccent{}$}\vss}\hidewidth}}
\newcommand{\nN}{\ol{M}^w}
\begin{document}

\title[Geometric realizations of the multiplihedron]{Geometric
  realizations of the multiplihedron} \author{S. Mau}

\author{C. Woodward}

\begin{abstract}
We realize Stasheff's multiplihedron geometrically as the moduli space
of stable quilted disks.  This generalizes the geometric realization
of the associahedron as the moduli space of stable disks.  We show
that this moduli space is the non-negative real part of a complex
moduli space of {\em stable scaled marked curves}.
\end{abstract}

\maketitle

\section{Introduction}

The Stasheff polytopes, also known as {\em associahedra}, have had
many incarnations since their original appearance in Stasheff's work
on homotopy associativity \cite{stasheff}.  A particular realization
of the associahedra as the compactified moduli space of nodal disks
with markings is described by Fukaya and Oh \cite{zero-loop}. The
natural cell decomposition arising from this compactification is dual
to the cell decomposition arising from the compactification of a space
of metric trees studied by Boardman and Vogt \cite{boardman-vogt}.  In
this paper we describe analogous constructions for a related family of
polytopes $J_n$, called the {\em multiplihedra}, which appeared in
\cite{stasheff} when defining $A_\infty$ maps between $A_\infty$
spaces, see also Iwase and Mimura \cite{iwase-mimura}.  The
multiplihedra have a realization as metric trees with levels as found
in \cite{boardman-vogt}, which in a certain sense dualizes the CW
structure in Stasheff.  We consider a moduli space $M_{n,1}$ of marked
{\em quilted} disks, which are disks with $n+1$ marked points $z_0,
\ldots, z_n$ on the boundary, and an interior circle passing through
the marked point $z_0$.  This moduli space has a compactification
$\ol{M}_{n,1}$ by allowing nodal disks as in the definition of the
moduli space of stable marked disks.  Our first main result is

\begin{theorem} \label{main} The moduli space of stable $n$-marked quilted disk 
$\ol{M}_{n,1}$ is isomorphic as a CW-complex to the multiplihedron
  $J_n$.
\end{theorem}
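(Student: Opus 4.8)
The plan is to realize both $J_n$ and $\ol{M}_{n,1}$ as regular CW-complexes indexed by the same combinatorial data, and then to check that their face posets coincide. On the polytope side I would use the description recalled above: the faces of $J_n$ are indexed by $n$-leaved rooted trees equipped with a level structure in the sense of \cite{boardman-vogt}, each vertex being labelled according to whether it sits below, on, or above the distinguished level, with the two extreme ways of collapsing levels giving the two families of facets. On the moduli side I would stratify $\ol{M}_{n,1}$ by the \emph{combinatorial type} of a stable quilted disk, namely the dual tree recording the decomposition into disk components, decorated by which components lie inside the quilting circle, which single component carries the circle, and which lie outside. The heart of the argument is a bijection between these two families of decorated trees that intertwines the degeneration order on the moduli side with the face-inclusion order on the polytope side.

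First I would fix coordinates on the open stratum $M_{n,1}$. Sending $z_0 \mapsto \infty$ identifies the disk with the upper half-plane $\H$, so that the remaining marked points become reals $z_1 < \cdots < z_n$ and the quilting circle through $z_0$ becomes a horocycle $\{\Im z = c\}$ with $c > 0$. The automorphisms fixing $\infty$ are the affine maps $z \mapsto az + b$ with $a > 0$, a two-dimensional group, so $\dim M_{n,1} = (n+1) - 2 = n-1 = \dim J_n$; normalizing $c = 1$ and the barycenter of the $z_i$ to $0$ exhibits $M_{n,1}$ as an open $(n-1)$-cell and identifies it with the space of stable scaled marked curves of the abstract. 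By the cross-ratio analysis used for ordinary stable disks I would then show that each boundary stratum is again an open cell whose codimension equals the number of interior nodes of the dual tree plus the number of collapsed levels, so that the stratification of $\ol{M}_{n,1}$ is a regular CW-decomposition.

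The central step is to read the degenerations off geometrically and match them with level-collapses of level trees. There are three basic ways the configuration degenerates: (i) adjacent points $z_i, z_{i+1}$ collide, bubbling off a disk component lying entirely below or entirely above the horocycle, which reproduces the associahedral degenerations and corresponds to contracting an internal edge within a single level; (ii) the height $c$ grows relative to the spread of the $z_i$, so the circle shrinks toward $z_0$ and all marked points fall inside it, giving the limit where the source multiplication occurs before the map; and (iii) $c \to 0$, so the circle approaches the boundary and the marked points rise outside it, giving the limit where the map is applied before the target multiplication. Cases (ii) and (iii) furnish exactly the two families of facets of $J_n$, and iterating all three recovers the full poset of level trees; assembling these identifications yields the order-isomorphism between the stratification poset of $\ol{M}_{n,1}$ and the face poset of $J_n$.

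Since an isomorphism of face posets between two regular CW-complexes whose cells are balls induces an isomorphism of CW-complexes, this completes the proof. I expect the main obstacle to be the third step: tracking the quilting circle through the bubbling process, making precise when a node is inside, on, or outside the circle, and verifying that the codimension contributed by a collapsing level matches the corresponding drop in dimension on the polytope side. One must also check that no spurious strata appear and that stability cuts out precisely the level trees indexing $J_n$; here the stability condition for quilted disks is slightly subtler than for ordinary disks, since a component carrying the circle may be stable even with comparatively few special points.
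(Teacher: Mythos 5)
Your strategy --- index both spaces by the same decorated trees and invoke the fact that a regular CW complex is determined up to cellular homeomorphism by its face poset --- is genuinely different from the paper's, and it has a real gap at the step you pass over most quickly. The assertion ``each boundary stratum is an open cell of the expected codimension, so the stratification of $\ol{M}_{n,1}$ is a regular CW-decomposition'' does not follow: regularity requires the \emph{closure} of every stratum to be a closed ball attached along a homeomorphism of its boundary sphere. For the positive-codimension strata one can hope to induct, since the closure of $M_{n,1,T}$ factors as a product of lower-dimensional $\ol{M}_{k,1}$'s and $\ol{M}_j$'s; but for the top cell the needed statement is precisely that $\ol{M}_{n,1}$ itself is a closed $(n-1)$-ball, equivalently the cone on its boundary, and nothing in your outline delivers this. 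A compact space that is the union of one open $(n-1)$-cell and an $(n-2)$-sphere need not be a ball (the attaching map of the top cell could fail to be injective). Nor can you fall back on a manifold-with-corners/collar argument: the paper shows that $\ol{M}_{n,1}$ has genuine toric singularities on its boundary already for $n=4$ (a vertex of the three-dimensional $\ol{M}_{4,1}$ from which four edges emanate), so local regularity near the boundary is a substantive issue, not a routine transversality check.

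The paper closes exactly this gap by a different mechanism: it assigns Forcey-type weight vectors to the maximal colored trees, realizes $\ol{M}_{n,1}$ as the non-negative part of the associated projective toric variety (Theorem \ref{thmtoric}), and identifies that non-negative part, via the moment map, with the convex hull of the weight vectors; being homeomorphic to a convex polytope, $\ol{M}_{n,1}$ is the cone on its boundary, and the CW-isomorphism with $J_n$ then follows by a short induction on $n$. To salvage your route you would need to import such a global convexity statement or prove directly that the closed top stratum is a ball. Two smaller cautions: your geometric dictionary in steps (ii)/(iii) is inverted --- as the seam shrinks toward $z_0$ (height $c\to\infty$ in the half-plane model) the marked points end up \emph{outside} the circle, while $c\to 0$ pushes the seam to the boundary and produces an unquilted root component --- and the Boardman--Vogt level-tree indexing is \emph{dual} to the stratification by combinatorial type (the paper's Theorem \ref{homeo3}), so the poset you use for $J_n$ must be the colored/painted-tree poset rather than the level-tree poset if it is to match the degeneration order on moduli.
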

\noindent Another geometric realization of the multiplihedron, which
gives a different CW structure, appears in Fukaya-Oh-Ohta-Ono
\cite{fooo}.  The authors of \cite{fooo} denote them by $\nN_{n}$ for
$k = 1,2,\ldots$ and use them to define \ainfty maps. The geometric
description of $\nN_{n}$ is similar to the space of quilted disks, in
that it is a moduli space of stable marked nodal disks with some
additional structure.  The main difference is that their complex is
has the structure of a manifold with corners, whereas the moduli space
of quilted disks has real toric singularities on its boundary.

Using our geometric realization, we introduce a natural
complexification of the multiplihedron.  The moduli space of quilted
disks ${M}_{n,1}$ can also be naturally identified with the moduli
space of $n$ points on the real line modulo translation only.  As
such, it sits inside the moduli space $M_{n,1}(\C)$ of $n$ points on
the complex plane modulo translation.  A natural compactification
$\ol{M}_{n,1}(\C)$ of this space was constructed in Ziltener's thesis
\cite{zilt:phd}, as the moduli space of symplectic vortices on the
affine line with trivial target.  Our second main result concerns the
structure of Ziltener's compactification $\ol{M}_{n,1}(\C)$, and its
relationship with the multiplihedron:

\begin{theorem} The moduli space of stable scaled marked curves 
$\ol{M}_{n,1}(\C)$ admits the structure of a complex projective
  variety with toric singularities that contains the multiplihedron
  $\ol{M}_{n,1}$ as a fundamental domain of the action of the
  symmetric group $S_n$ on its real locus.
\end{theorem}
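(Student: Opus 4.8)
The plan is to equip Ziltener's compactification $\ol{M}_{n,1}(\C)$ with an explicit algebraic structure and then read off both the toric singularities and the fundamental-domain statement from it. I would start from coordinates on the open stratum: placing $z_0$ at infinity, the space $M_{n,1}(\C)$ of $n$ distinct points $z_1,\dots,z_n\in\C$ modulo translation is the complement of the diagonals $z_i=z_j$ in $\C^{n-1}$, where we quotient only by translation and \emph{retain} the overall scale. Since $\dim_\C M_{n,1}(\C)=n-1$ while $\dim \ol{M}_{0,n+1}=n-2$, the forgetful map that remembers only the underlying stable $(n+1)$-marked curve and discards the scaling should have generically one-dimensional fibres recording this retained scale. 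The first step is therefore to realize $\ol{M}_{n,1}(\C)$ over $\ol{M}_{0,n+1}$ as a family whose fibres are chains of $\P^1$'s (the scale degenerating level by level along the bubble tree), exhibiting $\ol{M}_{n,1}(\C)$ inside a projective bundle and hence as a projective variety; equivalently one can present it as an iterated blow-up of $\P^{n-1}$.

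Next I would analyze the boundary strata combinatorially. A stable scaled curve is encoded by a tree with a level structure — precisely the metric trees with levels of Boardman--Vogt that realize $J_n$ — and near such a stratum I expect the local model to be a product of a smooth factor with an affine toric variety $Y_\sigma$ attached to a rational polyhedral cone $\sigma$ built from the level combinatorics of the tree. The toric singularities of $\ol{M}_{n,1}(\C)$ would then be exactly those supported on strata where several distinct scalings degenerate simultaneously, and they should match, after passing to real points, the real toric singularities already present on the boundary of $\ol{M}_{n,1}$.

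For the group action, the symmetric group $S_n$ acts algebraically on $\ol{M}_{n,1}(\C)$ by permuting $z_1,\dots,z_n$ and fixing $z_0$, preserving stability and the scaling; complex conjugation $z\mapsto\bar z$ on $\C$ induces an anti-holomorphic involution whose fixed-point set I take as the real locus. The totally real configurations, those with all $z_i\in\R$, form the real points of $M_{n,1}(\C)$, which is the space of $n$ points on the line modulo translation; its decomposition by the ordering of the $z_i$ cuts it into $n!$ chambers, one for each element of $S_n$, each chamber being a copy of the quilted-disk moduli $M_{n,1}$. By Theorem \ref{main} the closure of one chamber is a copy of the multiplihedron $J_n$, and $S_n$ permutes the chambers simply transitively.

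Finally, to conclude that $\ol{M}_{n,1}$ is a fundamental domain I would verify that the $S_n$-translates of this closed chamber cover the totally real locus and that two distinct translates meet only along boundary strata forced by a coincidence $z_i=z_j$, hence in lower dimension. The main obstacle is twofold. First, one must match Ziltener's analytically defined vortex compactification with the algebraic model above, so that the projective and toric structures just constructed are genuinely carried by $\ol{M}_{n,1}(\C)$. Second, one must control the deepest strata, where many markings collide at several scales at once, both to pin down the precise toric local models $Y_\sigma$ and to check that the chamber closures tile the real locus without top-dimensional overlap and with no spurious real points (for instance those coming from conjugate pairs $z,\bar z$) outside the union of multiplihedra. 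Getting the level-tree bookkeeping to track these simultaneous degenerations correctly is the technical heart of the proof.
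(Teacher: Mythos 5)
Parts of your outline do track the paper's argument: the identification of the open stratum with configurations modulo translation only, the expectation that each boundary stratum has a local model of the form (smooth factor) $\times$ (affine toric variety), and the fundamental-domain argument — $S_n$ permuting the $n!$ order-chambers of the totally real configurations, with the closure of one chamber being $\ol{M}_{n,1}\cong J_n$ by Theorem \ref{main} — are all essentially what the paper does. (Your worry about ``spurious'' real points from conjugate pairs is largely moot on the open stratum: a fixed point of the antiholomorphic involution carries an antiholomorphic automorphism fixing every labelled marking, which forces the markings onto a real circle.) The paper realizes the local toric models concretely as the varieties $X(T)$ of balanced labellings of colored trees, via the simple-ratio charts $\phi_T$ of Theorem \ref{toricprop}, rather than abstractly as cones $Y_\sigma$, but that part of your plan is sound.

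The genuine gap is in your route to projectivity. The forgetful map $\ol{M}_{n,1}(\C)\to\ol{M}_{0,n+1}(\C)$ is not a family of chains of $\P^1$'s and is not even equidimensional: already for $n=4$, over the zero-dimensional stratum of $\ol{M}_{0,5}$ where $\{z_1,z_2\}$ and $\{z_3,z_4\}$ have bubbled off, the fibre contains the two-dimensional stratum in which \emph{both} bubbles carry non-degenerate scalings (this is the square face of the ``Chinese lantern'' of Figure \ref{singularity}), while the generic fibre is one-dimensional. So there is no projective bundle over $\ol{M}_{0,n+1}$ into which the space naturally embeds, and the ``equivalently an iterated blow-up of $\P^{n-1}$'' claim is inconsistent with the very singularities you are trying to exhibit: $\ol{M}_{4,1}(\C)$ has a local model $xy=zw$ (Figure \ref{complex_chart}), the cone over a quadric, which no iterated blow-up of a smooth variety along smooth centres can produce. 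The paper's actual mechanism is an explicit closed embedding: the cross-ratios $\rho_{ijkl}$ together with the scale ratios $\rho_{ij}$ map $\ol{M}_{n,1}(\C)$ into a product of copies of $\P^1(\C)$, and Theorem \ref{bijection} — proved via Lemma \ref{refine_lemma} and an explicit reconstruction of a stable scaled curve from a point of $\ol{A}_{n,1}(\C)$ — identifies the image with the closed subvariety cut out by the recursion relations. That identification is the technical heart of the projectivity claim, and it has no counterpart in your outline; as written, your first step would need to be replaced rather than merely fleshed out.
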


This result is analogous to that for the Grothendieck-Knudsen moduli
space of genus zero marked stable curves, which contains the
associahedron as a fundamental domain for the action of the symmetric
group on its real locus.  In \cite{morphism} this moduli space is used
to define a notion of {\em morphism of cohomological field theories}.

\section{Background on associahedra}

Let $n > 2$ be an integer.  The $n$-th associahedron $K_n$ is a
$CW$-complex of dimension $n-2$ whose vertices correspond to the
possible ways of parenthesizing $n$ variables $x_1,\ldots,x_n$.  Each
facet of $K_n$ is the image of an embedding
\begin{equation} \label{boundary}
\phi_{i,e} : K_{i} \times K_{e} \to K_n, \ \ i + e = n+1 \end{equation} 
corresponding to the expression $x_1 \ldots x_{i-1} (x_i \ldots x_{i +
  e}) x_{i + e + 1} \ldots x_n $.  The associahedra have geometric
realizations as moduli spaces of genus zero nodal disks with markings:

\begin{definition} 
A {\em marked nodal disk} consists of a collection of disks, a
collection of {\em nodal points}, and a collection of {\em markings}
$(z_1,\ldots,z_n)$ disjoint from the nodes, in clockwise order around
the boundary, see \cite{zero-loop}.  The {\em combinatorial type} of
the nodal disk is the ribbon tree obtained by replacing each disk with
a vertex, each nodal point with a finite edge between the vertices
corresponding to the two disk components, and each marking with a
semi-infinite edge.  A marked nodal disk is {\em stable} if each disk
component contains at least three nodes or markings.  A {\em morphism}
between nodal disks is a collection of holomorphic isomorphisms
between the disk components, preserving the singularities and
markings.
\end{definition} 

Any combinatorial type has a distinguished edge defined by the
component containing the zeroth marking $z_0$.  Thus the combinatorial
type of a nodal disk with markings is a rooted tree.  Let $M_{n,{T}}$
denote the set of isomorphism classes of stable nodal marked disks of
combinatorial type ${T}$, and
$ \ol{M}_n = \bigcup_{{T}} M_{n,{T}} .$
$\ol{M}_{n}$ can be identified with a part of the real locus of the
Grothendieck-Knudsen moduli space $\ol{{M}}_{n+1}(\C)$ of stable genus
zero marked complex curves.  The topology on $\ol{M}_{n+1}(\C)$ has an
explicit description in terms of cross-ratios \cite[Appendix
  D]{mcd-sal}, hence so does the topology on $\ol{M}_n$.  The
cross-ratio of four distinct points $w_1,w_2,w_3,w_4 \in \C$ is
$$ \rho_4(w_1,w_2,w_3,w_4) = \frac{(w_2 - w_3)(w_4 - w_1)}{(w_1 -
w_2)(w_3 - w_4)} $$
and represents the image of $w_4$ under the fractional linear transformation that sends $w_1$ to 0, $w_2$ to 1, and $w_3$ to $\infty$. $\rho_4$ is invariant under the action of $SL(2,\C)$ on $\C$ by
fractional linear transformations.  By identifying $\P^1(\C) \to \C \cup
\{ \infty \}$ and using invariance we obtain an extension of $\rho_4$ to
$\P^1(\C)$, that is, a map
$$ \rho_4: \{ (w_1,w_2,w_3,w_4) \in (\P^1(\C))^4, \ \ i \neq j \implies w_i
\neq w_j \} \to \C - \{ 0 \} .$$
$\rho_4$ naturally extends to the geometric invariant theory quotient
$$ (\P^1(\C))^4 \qu SL(2,\C) = \{ (w_1,w_2,w_3,w_4), \ \text{no more than
two points equal} \} / SL(2,\C) $$
by setting
\begin{equation} \label{extend}
 \rho_4(w_1,w_2,w_3,w_4) = \left\{ \begin{array}{ll} 0 & \text{\ if \ } w_2
= w_3 \text{\ or \ } w_1 = w_4 \\ 
1 & \text{\ if \ } w_1 = w_3
\text{\ or \ } w_2= w_4\\
\infty & \text{\ if \ } w_1 = w_2
\text{\ or \ } w_3= w_4 \end{array} \right\} \end{equation}
and defines an isomorphism from $(\P^1(\C))^4 \qu SL(2,\C)$ to $\P^1(\C) .$
Let $\R^4_+ \subset \R^4$ denote the subset of distinct points
$(w_1,w_2,w_3,w_4) \in \R^4$ in cyclic order.  The restriction of
$\rho_4$ to $\R^4_+$ takes values in $(-\infty,0)$ and is invariant
under the action of $SL(2,\R)$ by fractional linear transformations.
Hence it descends to a map
$ (\R^4)_+ / SL(2,\R) \to (-\infty,0) .$
Let $D$ denote the unit disk, and identify $D \setminus \{-1\} $ with the half
plane $\H$ by $ z \mapsto 1/(z+1)$.  Using invariance one constructs an
extension
$ \rho_4: (\partial D)_+^4 / SL(2,\R) = M_4 \to (-\infty,0) $
where $(\partial D)_+^4$ is the set of distinct points on $\partial D$
in counterclockwise cyclic order.  $\rho_4$ admits an extension to $\ol{M}_4$ 
via \eqref{extend} and so defines an isomorphism 
$ \rho_4: \ol{M}_4 \to [-\infty,0] .$
For any distinct indices $i,j,k,l$ the cross-ratio $\rho_{ijkl}$ is
the function
$$ \rho_{ijkl}: M_n \to \R, \ \ \ [w_0,\ldots,w_n] \mapsto
\rho_4(w_i,w_j,w_k,w_l) .$$
Extend $\rho_{ijkl}$ to $\ol{M}_n$ as follows.  Let ${T}(ijkl) \subset
{T}$ be the subtree whose ending edges are the semi-infinite edges
$i,j,k,l$.  The subtree ${T}(ijkl)$ is one of the three types in
Figure \ref{edge}.
\begin{figure}[h]
\setlength{\unitlength}{0.00027489in}
\begingroup\makeatletter\ifx\SetFigFont\undefined%
\gdef\SetFigFont#1#2#3#4#5{%
  \reset@font\fontsize{#1}{#2pt}%
  \fontfamily{#3}\fontseries{#4}\fontshape{#5}%
  \selectfont}%
\fi\endgroup%
{\renewcommand{\dashlinestretch}{30}
\begin{picture}(11360,3975)(0,-10)
\path(9499.066,3746.360)(9393.000,3810.000)(9456.640,3703.934)
\path(9393,3810)(10293,2910)(11193,3810)
\path(11129.360,3703.934)(11193.000,3810.000)(11086.934,3746.360)
\path(10293,2910)(10293,1110)(9393,210)
\path(9456.640,316.066)(9393.000,210.000)(9499.066,273.640)
\path(10293,1110)(11193,210)
\path(11086.934,273.640)(11193.000,210.000)(11129.360,316.066)
\put(9318,3795){\makebox(0,0)[lb]{{{$i$}}}}
\put(11268,3755){\makebox(0,0)[lb]{{{$l$}}}}
\put(11238,15){\makebox(0,0)[lb]{{{$k$}}}}
\put(9303,30){\makebox(0,0)[lb]{{{$j$}}}}
\path(241.066,2817.360)(135.000,2881.000)(198.640,2774.934)
\path(135,2881)(1035,1981)(135,1081)
\path(198.640,1187.066)(135.000,1081.000)(241.066,1144.640)
\path(1035,1981)(2835,1981)(3735,2881)
\path(3671.360,2774.934)(3735.000,2881.000)(3628.934,2817.360)
\path(2835,1981)(3735,1081)
\path(3628.934,1144.640)(3735.000,1081.000)(3671.360,1187.066)
\put(45,3016){\makebox(0,0)[lb]{{{$i$}}}}
\put(3615,3001){\makebox(0,0)[lb]{{{$l$}}}}
\put(3675,841){\makebox(0,0)[lb]{{{$k$}}}}
\put(15,871){\makebox(0,0)[lb]{{{$j$}}}}
\path(5361.066,3322.360)(5255.000,3386.000)(5318.640,3279.934)
\path(5255,3386)(6605,2036)(5255,686)
\path(5318.640,792.066)(5255.000,686.000)(5361.066,749.640)
\path(7891.360,3279.934)(7955.000,3386.000)(7848.934,3322.360)
\path(7955,3386)(6605,2036)(7955,686)
\path(7848.934,749.640)(7955.000,686.000)(7891.360,792.066)
\put(5090,3416){\makebox(0,0)[lb]{{{$i$}}}}
\put(7955,3466){\makebox(0,0)[lb]{{{$l$}}}}
\put(7985,476){\makebox(0,0)[lb]{{{$k$}}}}
\put(5105,536){\makebox(0,0)[lb]{{{$j$}}}}
\end{picture}
}
\caption{Cross-ratios by combinatorial type: for the first type, $\rho_{ijkl}(S) = -\infty$, for the second type $\rho_{ijkl}(S) \in (-\infty, 0)$, and for the third type $\rho_{ijkl}(S) = 0$.}\label{edge}
\end{figure}
In the first resp. third case, we define $ \rho_{ijkl}(S) = -\infty
\ \ \ \text{resp.} \ 0.$ In the second case, let
$\ol{w}_i,\ol{w}_j,\ol{w}_k,\ol{w}_l$ be the points on the component
where the four branches meet and define $ \rho_{ijkl}(S) = \rho(
\ol{w}_i,\ol{w}_j,\ol{w}_k,\ol{w}_l) .$  
%
\noindent Properties of $\rho_{ijkl}$ that follow from elementary facts about 
cross-ratios \cite[Appendix D]{mcd-sal} are

\begin{proposition}  
\begin{enumerate} 
\item {\bf (Invariance):} For all marked nodal disks $S$, and for all
  $\phi \in SL(2,\R)$, $\rho_{ijkl}(\phi(S)) = \rho_{ijkl}(S)$.
\item {\bf (Symmetry):} $\rho_{jikl} = \rho_{ijlk} = 1 - \rho_{ijkl}$, and $\rho_{ikjl} = \rho_{ijkl}( \rho_{ijkl}-1)$.
\item {{\bf (Normalization):}} $\rho_{ijkl} = \left\{
\begin{array}{cc} 
\infty, & \mbox{if}\ i=j\ \mbox{or}\ k=l,\\ 1, &
\mbox{if}\ i=k\ \mbox{or}\ j=l,\\ 0, & \mbox{if}\ i=l\ \mbox{or}\ j=k.
\end{array}\right.$
\item {\bf (Recursion):} As long as the set $\{1, \infty, \rho_{ijkl}, \rho_{ijkm}\}$ contains three distinct numbers, then 
$\rho_{jklm} = \frac{\rho_{ijkm}-1}{\rho_{ijkm} - \rho_{ijkl}}$
for any five pairwise distinct integers $i, j, k, l, m \in \{0, 1, \ldots, d\}$.  
\end{enumerate}
\end{proposition}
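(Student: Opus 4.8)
The plan is to handle the four items separately, the first three being the classical transformation laws of the cross-ratio of four points and the last being the only one that requires a genuine computation. Throughout, the key tool is that on the open stratum $M_n$ the function $\rho_{ijkl}$ is the composite of the projection $[w_0,\ldots,w_n]\mapsto(w_i,w_j,w_k,w_l)$ with $\rho_4$, so every identity among cross-ratios of four points lifts to an identity among the $\rho_{ijkl}$; the work on the boundary is then to check compatibility with the extension \eqref{extend} and the subtree prescription of Figure~\ref{edge}.

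\emph{Invariance} is immediate on $M_n$ from the stated $SL(2,\C)$-invariance of $\rho_4$, restricted to $SL(2,\R)$. On a boundary stratum the two constant cases ($0$ and $-\infty$) depend only on the combinatorial type of $T(ijkl)$, which $\phi\in SL(2,\R)$ preserves; in the remaining case $\rho_{ijkl}(S)$ is $\rho_4$ evaluated at the four branch points on a single component, on which $\phi$ acts by a fractional linear transformation, so invariance again reduces to that of $\rho_4$. \emph{Symmetry} and \emph{Normalization} follow by direct substitution into $\rho_4(w_1,w_2,w_3,w_4)=(w_2-w_3)(w_4-w_1)/\bigl((w_1-w_2)(w_3-w_4)\bigr)$: transposing the first pair or the last pair of arguments replaces $\rho_4$ by $1-\rho_4$ (a one-line factorization, since the numerator of $1-\rho_4$ factors as $(w_1-w_3)(w_2-w_4)$), and the remaining transposition produces the corresponding member of the anharmonic orbit; the values $0,1,\infty$ of Normalization are exactly the collisions recorded in \eqref{extend}. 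Extending these from $M_n$ to $\ol M_n$ is a case check against \eqref{extend}.

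The content is \emph{Recursion}, which I would prove by normalizing with Invariance. Recall $\rho_4(w_1,w_2,w_3,w_4)$ is the image of $w_4$ under the unique fractional linear map sending $(w_1,w_2,w_3)\mapsto(0,1,\infty)$. Let $T\in SL(2,\C)$ send $(w_i,w_j,w_k)\mapsto(0,1,\infty)$; then by definition $T(w_l)=\rho_{ijkl}=:p$ and $T(w_m)=\rho_{ijkm}=:q$, while $T(w_j)=1$ and $T(w_k)=\infty$. By Invariance $\rho_{jklm}$ is unchanged under $T$, so $\rho_{jklm}=\rho_4(1,\infty,p,q)$; evaluating (letting the second argument tend to $\infty$, or directly via \eqref{extend}) gives $\rho_4(1,\infty,p,q)=(q-1)/(q-p)$, which is the asserted identity $\rho_{jklm}=(\rho_{ijkm}-1)/(\rho_{ijkm}-\rho_{ijkl})$. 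The hypothesis that $\{1,\infty,\rho_{ijkl},\rho_{ijkm}\}$ contains three distinct numbers is precisely the condition that no more than two of the arguments $1,\infty,p,q$ coincide, i.e.\ that this last cross-ratio lies in the locus of $(\P^1(\C))^4\qu SL(2,\C)$ on which $\rho_4$ is defined.

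The main obstacle is not the interior computation but its transport to $\ol M_n$: there the five points $w_i,\ldots,w_m$ may be distributed over several components of a nodal disk, and the normalizing map $T$ together with the evaluation points $0,1,\infty,p,q$ may degenerate. I expect the cleanest argument is to note that both sides of each identity are prescribed by \eqref{extend} and Figure~\ref{edge}, that they agree on the dense stratum $M_n$, and that $\rho_4\colon\ol M_4\to[-\infty,0]$ is continuous; the finitely many boundary configurations are then matched by comparing the combinatorial-type cases of Figure~\ref{edge} with \eqref{extend}. Care is needed to confirm that the normalization argument for Recursion survives degeneration exactly when the stated three-distinct-values hypothesis holds, since that is what keeps the target cross-ratio well defined.
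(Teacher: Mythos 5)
The paper offers no proof of this proposition at all --- it is asserted to ``follow from elementary facts about cross-ratios'' with a citation to Appendix~D of \cite{mcd-sal} --- so you are supplying exactly the argument the authors leave implicit, and it is the standard one: verify each identity for $\rho_4$ on the open stratum, use the normalization $(w_i,w_j,w_k)\mapsto(0,1,\infty)$ for Recursion (your evaluation $\rho_4(1,\infty,p,q)=(q-1)/(q-p)$ is correct and gives the stated formula), and then match boundary cases against \eqref{extend} and Figure~\ref{edge}. Two small caveats. First, if you actually perform the ``direct substitution'' for the second Symmetry identity you get $\rho_{ikjl}=\rho_{ijkl}/(\rho_{ijkl}-1)$, not $\rho_{ijkl}(\rho_{ijkl}-1)$: the printed expression is a quadratic polynomial in $\rho_{ijkl}$ and hence not a member of the anharmonic orbit at all, so it must be a typo in the paper; your appeal to ``the corresponding member of the anharmonic orbit'' is correct in spirit but glosses over a discrepancy that your own method would have exposed. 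Second, the hypothesis that $\{1,\infty,\rho_{ijkl},\rho_{ijkm}\}$ contain three distinct values is strictly stronger than the condition that $(1,\infty,p,q)$ lie in the locus ``no more than two points equal'': when $p=1$ and $q=\infty$ no three arguments coincide and \eqref{extend} still gives $\rho_4(1,\infty,1,\infty)=1$, yet the right-hand side $(q-1)/(q-p)$ is indeterminate. So the hypothesis is there to keep the \emph{formula} well defined, not the cross-ratio itself; your gloss identifying the two conditions is slightly off. Neither point undermines the overall correctness of the approach.
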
 

The collection of functions
$\rho_{ijkl}, \ i < j < k < l$ defines a map of sets
\begin{equation}\label{embedding_md}
 \rho_n: \ol{M}_n \mapsto [-\infty,0]^N, \quad \ N ={d+1\choose 4},
 \end{equation} 
 which is the restriction of the corresponding map $\rho_n: \ol{M}_{n+1}(\C) \to (\P^1(\C))^N$ defined by cross-ratios.
\begin{theorem}[Theorem D.4.5, \cite{mcd-sal}]\label{injection_n}
The map $\rho_n: \ol{M}_{n+1}(\C) \to (\P^1(\C))^N$ is injective, and its image is closed. 
\end{theorem}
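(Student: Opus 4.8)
The plan is to treat the two assertions separately: closedness of the image follows from a soft compactness argument, while injectivity is the substantive claim and requires reconstructing a stable curve from the tuple of its cross-ratios.

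For closedness, recall that $\ol{M}_{n+1}(\C)$ is a compact (indeed smooth projective) variety and that each extended cross-ratio $\rho_{ijkl}$ is continuous on it, by the limiting description given before the Proposition together with the trichotomy of Figure~\ref{edge}. Hence $\rho_n$ is continuous, its domain is compact, and its target $(\P^1(\C))^N$ is Hausdorff; the image is therefore compact and a fortiori closed. Alternatively, one checks that the Symmetry and Recursion relations of the Proposition cut out the image as an algebraic subvariety of $(\P^1(\C))^N$, which is closed by construction.

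For injectivity I would reconstruct a point $[S] \in \ol{M}_{n+1}(\C)$ from $\rho_n([S])$ in two stages. First recover the combinatorial type $T$. By the trichotomy of Figure~\ref{edge}, for each quadruple $i<j<k<l$ the value $\rho_{ijkl} \in \{0, 1, \infty\}$ exactly when the subtree $T(ijkl)$ is degenerate, and the precise boundary value records how the four branches are paired by the separating node, while the finite values single out the quadruples whose branches meet in a single component. I would show that this pattern of degenerations determines, and is determined by, the rooted ribbon tree $T$: the nesting of the partitions read off from the $\{0,1,\infty\}$-values reconstructs the edges and the distribution of the markings $w_0, \ldots, w_n$ among the components. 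Given $T$, it remains to recover the configuration on each vertex (component) up to the $SL(2,\C)$-action. A component with exactly three special points carries no moduli, so there is nothing to check. For a component $C$ whose special points include $a, b, c$, I would use invariance to normalize these to $0, 1, \infty$; then for any further special point $x$ on $C$ the value $\rho_{abcx}$ is precisely the normalized coordinate of $x$, so the cross-ratios of quadruples meeting on $C$ pin down the configuration on $C$ uniquely up to isomorphism. The Recursion property lets me propagate between overlapping quadruples and verify consistency of these normalizations across the whole curve; assembling the reconstructed components along the nodes dictated by $T$ then recovers $[S]$.

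The main obstacle is the first stage: verifying that the $\{0,1,\infty\}$-pattern of the cross-ratios faithfully encodes the tree $T$, i.e. that no two distinct stable combinatorial types produce the same collection of degenerate quadruples, and that the recovered partitions are suitably nested so as to assemble into an actual rooted tree. This is a purely combinatorial argument, but it is precisely where the stability hypothesis (each component carrying at least three special points) and the cyclic ordering of the markings are genuinely used.
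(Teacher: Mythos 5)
First, note that the paper does not actually prove Theorem \ref{injection_n}: it is quoted verbatim from McDuff--Salamon (Theorem D.4.5), and the closest thing to a proof in the paper is the argument for the analogous complex statement, Theorem \ref{bijection}, of which this is declared to be a special case. Your injectivity sketch is essentially that argument: the $\{0,1,\infty\}$-pattern of the $\rho_{ijkl}$ recovers the combinatorial type, and the remaining finite values recover each component up to isomorphism by normalizing three special points and reading off the others. That is the right route, and you correctly flag the combinatorial step (that the degeneration pattern faithfully encodes the rooted ribbon tree) as the place where real work remains. One small caution there: the special points of a component are generally nodes rather than markings, so "for any further special point $x$ on $C$ the value $\rho_{abcx}$ is its normalized coordinate" needs the indices $a,b,c,x$ to be chosen as markings whose branches attach at those special points, exactly as in the second case of Figure \ref{edge}; this is routine but should be said.

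The genuine gap is in your treatment of closedness. Your primary argument assumes $\ol{M}_{n+1}(\C)$ is compact and $\rho_n$ continuous, but in the framework of this paper the topology on the moduli space is \emph{defined} by pulling back along the cross-ratio map (the paper does this explicitly for $\ol{M}_n$ and $\ol{M}_{n,1}$), and compactness is deduced \emph{from} the image being closed in the compact Hausdorff target. Using compactness to prove closedness is therefore circular here, unless you import the projective-variety structure of the Grothendieck--Knudsen space from elsewhere, which is a much heavier input than the theorem itself. Your alternative route --- that the Symmetry and Recursion relations cut out the image as a closed algebraic subvariety --- is the correct one (this is exactly the variety $\ol{A}_n(\C)$ appearing later in the paper), but "one checks" hides the substantive step: you must show the image \emph{equals} that variety, i.e.\ reconstruct a stable curve from an \emph{arbitrary} tuple satisfying the relations, not merely from a tuple already known to be of the form $\rho_n([S])$. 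That surjectivity argument is precisely what the paper carries out in detail for $\ol{A}_{n,1}(\C)$ in the proof of Theorem \ref{bijection} (building the nodal curve component by component from the admissible tuple), and your proposal does not supply it.
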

\begin{corollary}
The map $\rho_n: \ol{M}_n \mapsto [-\infty,0]^N$ is an embedding, and its image is closed.  
\end{corollary}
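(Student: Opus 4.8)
The plan is to deduce the real statement from the complex Theorem~\ref{injection_n} by restriction, using compactness to turn a continuous injection into a closed embedding. Two structural facts set this up. First, $\ol{M}_n$ is identified with a subset of the real locus of $\ol{M}_{n+1}(\C)$, and under this identification the map $\rho_n\colon \ol{M}_n \to [-\infty,0]^N$ of \eqref{embedding_md} is by construction the restriction of the complex map $\rho_n\colon \ol{M}_{n+1}(\C)\to(\P^1(\C))^N$. Second, for $i<j<k<l$ the real cross-ratio $\rho_{ijkl}$ takes values in the closed arc $[-\infty,0]\subset\P^1(\C)$, by the computation on $\R^4_+/SL(2,\R)$ preceding \eqref{extend} together with the boundary values assigned in \eqref{extend}; hence $\rho_n(\ol{M}_n)$ indeed lies in $[-\infty,0]^N$.

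Granting these, injectivity is immediate, since $\rho_n$ is injective on all of $\ol{M}_{n+1}(\C)$ by Theorem~\ref{injection_n} and the restriction of an injection is injective. To promote this to an embedding I would invoke compactness. The space $\ol{M}_n$ is a finite $CW$-complex, the geometric realization of the associahedron, so it is compact; the target $[-\infty,0]^N$ is compact Hausdorff. A continuous injection from a compact space into a Hausdorff space is a homeomorphism onto its image, so $\rho_n$ is an embedding. The same compactness gives closedness of the image: $\rho_n(\ol{M}_n)$ is the continuous image of a compact set, hence compact, hence closed in $[-\infty,0]^N$. (Alternatively, one notes that $\ol{M}_{n+1}(\C)$ is a projective variety, so Theorem~\ref{injection_n} already exhibits the complex $\rho_n$ as a homeomorphism onto a closed subset, and restricting an embedding to a subspace is again an embedding.)

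The only point requiring care, and the step I expect to be the main obstacle, is the continuity of $\rho_n$ on $\ol{M}_n$ across the boundary strata, where individual coordinates degenerate to $0$ or $-\infty$. This is precisely the content of the cross-ratio description of the topology on $\ol{M}_{n+1}(\C)$ cited from \cite[Appendix~D]{mcd-sal}: the extension \eqref{extend} of $\rho_4$ across nodal degenerations is continuous, so each coordinate $\rho_{ijkl}$ is continuous on $\ol{M}_{n+1}(\C)$ and a fortiori on $\ol{M}_n$. With continuity in hand, the compactness argument closes the proof with no further computation.
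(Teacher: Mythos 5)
Your overall strategy---deduce the real statement by restricting the complex Theorem~\ref{injection_n}---is the same one the paper intends, and the injectivity part of your argument is fine: the restriction of an injection is an injection. But the way you promote injectivity to ``embedding with closed image'' has a circularity problem in the paper's own framework. You take as given that $\ol{M}_n$ is compact ``as a finite $CW$-complex, the geometric realization of the associahedron.'' At this point in the paper, however, $\ol{M}_n$ is only a \emph{set} of isomorphism classes; the sentence immediately following the corollary reads ``The topology on $\ol{M}_n$ is defined by pulling back the topology on $[-\infty,0]^N$,'' and compactness and Hausdorffness are then stated as \emph{consequences}. (The identification with the associahedron as a $CW$-complex is precisely what Theorem~\ref{main} is out to prove, so it cannot be used here.) Thus the ``continuous injection from a compact space into a Hausdorff space'' argument assumes the conclusion: you need a topology on $\ol{M}_n$, and compactness in it, before you can run it.

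Your parenthetical alternative is the right repair, but it is incomplete as stated. Viewing $\ol{M}_n$ inside $\ol{M}_{n+1}(\C)$ and restricting the embedding of Theorem~\ref{injection_n} does give an embedding onto its image; what remains is closedness of that image in $[-\infty,0]^N$, and ``restricting an embedding'' does not give this for free. The clean way is to show the set-theoretic identity $\rho_n(\ol{M}_n)=\rho_n(\ol{M}_{n+1}(\C))\cap[-\infty,0]^N$, after which closedness follows because the right-hand side is an intersection of two closed subsets of $(\P^1(\C))^N$. You verified only the inclusion $\subseteq$ (that real cross-ratios of points in cyclic order land in $[-\infty,0]$). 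The reverse inclusion---that a stable complex curve all of whose cross-ratios $\rho_{ijkl}$, $i<j<k<l$, lie in $[-\infty,0]$ is the double of a stable marked disk with boundary markings in cyclic order---is the substantive point and is not addressed in your write-up. Once that identity is in place, compactness and Hausdorffness of $\ol{M}_n$ follow from the corollary rather than feeding into it, which is the order of logic the paper uses.
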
 
The topology on $\ol{M}_n$ is defined by pulling back the topology on $[-\infty,0]^N$.  With respect to this topology, $\ol{M}_n$ is compact and Hausdorff.  
%
%
Explicit coordinate charts which give $\ol{M}_n$ the structure of a
$(n-2)$ dimensional manifold-with-corners can be defined with
cross-ratios.  There is a canonical partial order on the combinatorial
types, and we write $T_0\leq T_1$ to mean that $T_1$ is obtained from
$T_0$ by contracting a subset of finite edges of $T_0$, in other
words, there is a surjective morphism of trees from $T_0$ to $T_1$.  Let
%
\[
\ol{M}_{n,\leq T_1} := \underset{T_0\leq T_1}{\cup} M_{n,T_0} \subset
\ol{M}_n.
\] 

\begin{definition}  \label{crossratios} A {\em cross-ratio chart} for
a combinatorial type ${T}$ is a map 
$$\psi_T: \ol{M}_{n,\leq T} \to
(0,\infty)^{n-2-|E|}\times [0,\infty)^{|E|}$$ 
where $|E|$ is the number of interior edges of ${T}$, given by
\begin{enumerate}
\item $n-2-|E|$ coordinates taking values in $(0,\infty)$, obtained by choosing $m- 3$ coordinates $-\rho_{ijkl}$ for each disk component with $m$ marked or singular points,
\item $|E|$ coordinates with values in $[0,\infty)$, obtained by choosing a coordinate $-\rho_{ijkl} = 0$ for each internal edge such that a $\rho_{ijkl}=0$ for any combinatorial type modeled on that edge.  
\end{enumerate}
\end{definition} 

\begin{theorem}[Theorem D.5.1 in \cite{mcd-sal}]\label{chartdisks}
For any combinatorial type $T$, suppose that $n-2$ cross-ratios have
been chosen as prescribed by (a), (b) above. Then, in the open set
$\ol{M}_{n,\leq {T}}$, all cross-ratios are smooth functions of those
chosen. Hence $\ol{M}_n$ is a smooth manifold-with-corners of real
dimension $n-2$.
\end{theorem}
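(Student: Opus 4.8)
The plan is to show that the $n-2$ chosen cross-ratios form genuine coordinates on $\ol{M}_{n,\leq T}$ by using the embedding of the Corollary together with the recursion and normalization properties of the Proposition to reconstruct every other cross-ratio smoothly from the chosen ones. Since $\rho_n$ is a closed embedding and the topology on $\ol{M}_n$ is pulled back from $[-\infty,0]^N$, a point is completely determined by the collection of all its cross-ratios; thus it suffices to prove two things on $\ol{M}_{n,\leq T}$: first, that every cross-ratio $\rho_{ijkl}$ is a smooth function of the chosen coordinates, and second, that the chosen coordinates already determine all cross-ratios, so that the chart map $\psi_T$ is injective. Granting these, $\psi_T$ is a continuous injection each of whose components is itself one of the $\rho_{ijkl}$, hence smooth, while its inverse is smooth because every ambient coordinate is a smooth function of the $\psi_T$-coordinates; so $\psi_T$ is a diffeomorphism onto a relatively open subset of the model corner $(0,\infty)^{n-2-|E|}\times[0,\infty)^{|E|}$.

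First I would treat the stratum $M_{n,T}$ itself, where all $|E|$ edge coordinates vanish. Here the configuration is a fixed tree of disk components and the moduli space factors as a product over the components, each component with $m_v$ special points contributing its own moduli space of dimension $m_v-3$. The interior coordinates of (a) are exactly $m_v-3$ cross-ratios of points on a single component, and by the classical fact that suitably chosen cross-ratios coordinatize the moduli of points on a disk — which here follows by iterating the recursion relation of the Proposition — these are independent coordinates on that factor, and every cross-ratio whose four indices lie on one component is a smooth, in fact rational, function of them. A cross-ratio whose indices straddle an internal edge is pinned to a boundary value $0$ or $-\infty$ by the normalization property and the trichotomy of Figure \ref{edge}. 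This establishes the claim along $M_{n,T}$ and identifies the $(0,\infty)^{n-2-|E|}$ factor.

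Next I would move transversally off this stratum by smoothing nodes. The content of (b) is that each internal edge $e$ carries a chosen cross-ratio $-\rho_e$ which, by the normalization property, vanishes precisely when the node at $e$ is present; consequently the common zero locus of the $|E|$ edge coordinates is $M_{n,T}$, and the vanishing of any subset of them corresponds exactly to the presence of the corresponding nodes, which is what produces the corner model $[0,\infty)^{|E|}$. To propagate smoothness into these directions I would apply the recursion relation of the Proposition repeatedly, choosing in each application a pivot index $i$ located at the node separating the relevant branches, so that the hypothesis that $\{1,\infty,\rho_{ijkl},\rho_{ijkm}\}$ contains three distinct values holds throughout $\ol{M}_{n,\leq T}$; this expresses each remaining cross-ratio as a rational function of the chosen coordinates whose denominator is bounded away from zero on the chart.

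I expect this last point to be the crux: one must verify that as cross-ratios degenerate to the boundary values $0,1,\infty$, the recursively generated formulas for the remaining cross-ratios stay smooth, i.e. that no denominator in the recursion vanishes anywhere on $\ol{M}_{n,\leq T}$. This is precisely where the combinatorial bookkeeping of which cross-ratios take boundary values on each substratum — as recorded by the normalization property and Figure \ref{edge} — is essential, and where the freedom to place the pivot index at the separating node is used. Once smoothness and injectivity are in hand, the charts $\psi_T$ cover $\ol{M}_n$, since each point lies in the chart $\psi_T$ of any trivalent type $T$ refining its own combinatorial type, and on overlaps the transition maps are smooth because each coordinate of one chart is a cross-ratio and hence, by the above, a smooth function of the other chart's coordinates. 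Therefore the cross-ratio charts furnish $\ol{M}_n$ with the structure of a smooth manifold-with-corners of real dimension $n-2$, as claimed.
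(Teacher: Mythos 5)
First, a point of reference: the paper does not prove Theorem \ref{chartdisks} at all --- it is quoted verbatim from Theorem D.5.1 of \cite{mcd-sal}, and the authors rely on the proof given there. So there is no in-paper argument to compare against; your proposal has to stand on its own, and its overall architecture (use the embedding $\rho_n$ to reduce everything to cross-ratios, handle the stratum $M_{n,T}$ component by component, then propagate off the stratum via the recursion relation) is indeed the architecture of the McDuff--Salamon proof.

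The problem is that, as written, your argument stops exactly where the theorem's content begins. You correctly identify the crux --- that every cross-ratio must be expressible, by iterated application of $\rho_{jklm} = (\rho_{ijkm}-1)/(\rho_{ijkm}-\rho_{ijkl})$, as a function of the chosen coordinates whose denominators are bounded away from zero \emph{uniformly on all of} $\ol{M}_{n,\leq T}$, including the boundary strata where cross-ratios hit $0$, $1$, or $-\infty$ --- but you then assert it ("so that the hypothesis \dots holds throughout") and in the next paragraph concede it still needs to be verified. That verification is the entire theorem: it requires an explicit induction scheme over index sets, a rule for choosing the pivot index $i$ adapted to the tree $T$ (the denominator $\rho_{ijkm}-\rho_{ijkl}$ genuinely can vanish for a bad pivot, e.g.\ when $z_l$ and $z_m$ collide onto a bubble not separated from $i,j,k$ by the pivot's node), and a check that the specific $m-3$ cross-ratios permitted by part (a) of Definition \ref{crossratios} --- which is stated rather loosely --- actually coordinatize each component's factor. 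Without carrying out this bookkeeping, the proposal is a correct plan rather than a proof; to complete it you would either have to reproduce the inductive argument of \cite[Appendix D]{mcd-sal} or simply cite it, as the paper does.
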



\label{metricribbon}

The associahedra have another geometric realization as {\em metric
  trees}, introduced in Boardman-Vogt \cite{boardman-vogt}.  Here we
follow the presentation in \cite{zero-loop}.
\begin{definition}
A {\em rooted metric ribbon tree} consists of
\begin{enumerate}
\item a finite tree $T = (V(T),\ol{E}(T))$ where $\ol{E}(T)$ is the
  union of a set $E(T)$ of {\em finite edges} incident to two vertices
  and a set $E_\infty(T) = \{ e_0,\ldots, e_n \}$ of {\em
    semi-infinite edges}, each of which is incident to a single
  vertex;
\item a cyclic ordering on the edges $\{ e \in \ol{E}(T), v \in e \}$ at
  each vertex $v \in V(T)$;
\item a distinguished edge $e_0 \in E_\infty(T)$, called the {\em
  root}; the other semi-infinite edges are called {\em leaves}.
\item a {\em metric} $\lambda: E(T) \to (0,\infty)$ 
\end{enumerate}
A tree is {\em stable} if each vertex has valence at least $3$.
\end{definition}

\noindent Given a rooted ribbon tree $T$ we denote by $\WW_{n,T}$ the
set of all metrics $\lambda: E(T) \to \R_+$.  The space of
all stable rooted metric ribbon trees with $n$ leaves is denoted
$$ \WW_n = \bigcup\limits_T \WW_{n,T}. $$
There is a natural topology on $\WW_n$, which allows the collapse of
edges whose lengths approach zero in a sequence.  The closure of
$\WW_{n,T}$ in $\WW_n$ is given by
\[
\ol{\WW}_T = \bigcup\limits_{T^\prime \leq T} \WW_{n,T^\prime}.
\]
\noindent 
Each cell $\WW_{n,T}$ is compactified by allowing the edge lengths to
be infinite. We denote the induced compactification of $\WW_n$ by
$\ol{\WW}_n$.
 The following theorem is
well-known:
\begin{theorem}\label{homeo2}
There exists a homeomorphism $\Theta: \ol{\WW}_n \to \ol{M}_{n}$ such that for any combinatorial type ${T}$, $\Theta(\ol{\WW}_{n,T})$ intersects $M_{n,{T}}$ in a single point.
\end{theorem}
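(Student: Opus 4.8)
The plan is to construct the homeomorphism $\Theta$ explicitly by matching the two stratifications. Both $\ol{\WW}_n$ and $\ol{M}_n$ are stratified by combinatorial type $T$, with a cell $\WW_{n,T}$ (resp.\ $M_{n,T}$) for each stable rooted ribbon tree, and in both spaces the closure relation is governed by the same partial order $T' \leq T$ (edge contraction). So the natural strategy is to define $\Theta$ one stratum at a time, compatibly with these closure relations, and then check that the assembled map is a global homeomorphism.

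First I would work on a fixed open stratum. On $\WW_{n,T}$, a point is a metric $\lambda: E(T) \to (0,\infty)$ on the finite edges, so $\WW_{n,T} \cong (0,\infty)^{|E(T)|}$. On the moduli side, Theorem~\ref{chartdisks} gives that $M_{n,T}$ is coordinatized by the cross-ratios, with each disk component carrying $m-3$ internal moduli (where $m$ is its number of special points) and each internal edge carrying one boundary coordinate $-\rho_{ijkl} \in [0,\infty)$. The key idea is the standard dictionary between edge lengths and cross-ratios: a finite edge $e$ of length $\lambda(e)$ should correspond to a cross-ratio $-\rho_{ijkl}(e)$ that degenerates to $0$ precisely as $\lambda(e) \to \infty$ and stays bounded away from $0$ for finite lengths. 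The cleanest way to set this up is via the exponential map, defining $-\rho_{ijkl}(e) = e^{-\lambda(e)}$ (up to a choice of which four leaves straddle the edge), so that $\lambda(e) = 0$ maps to the open-stratum value $1$ and $\lambda(e) = \infty$ maps to the degenerate value $0$. One then verifies that this prescription is consistent across all cross-ratio charts using the recursion and symmetry relations of Proposition~2.1, which pin down all remaining cross-ratios as smooth functions of the chosen ones.

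Next I would check compatibility with the boundary structure. Collapsing a finite edge of $T$ (passing to $T' \geq T$) corresponds on the tree side to letting $\lambda(e) \to 0$, and on the moduli side to $-\rho_{ijkl}(e) \to 1$, i.e.\ moving into the lower-dimensional stratum $M_{n,T'}$; sending $\lambda(e) \to \infty$ corresponds to bubbling off a disk, i.e.\ moving to $M_{n,T''}$ with $T'' \leq T$ having an extra node. Because both $\ol{\WW}_n$ (via edge-collapse topology and the infinite-length compactification) and $\ol{M}_n$ (via pullback of the topology on $[-\infty,0]^N$, by the Corollary) have their topologies determined by exactly these degeneration limits, the stratum-wise maps glue to a continuous bijection $\Theta: \ol{\WW}_n \to \ol{M}_n$. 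The requirement that $\Theta(\ol{\WW}_{n,T})$ meet $M_{n,T}$ in a single point is automatic from this construction: the interior-of-the-stratum locus $M_{n,T}$ corresponds to all finite edges of $T$ collapsed, which is the cone point $\lambda \equiv 0$ of $\ol{\WW}_{n,T}$, a single point.

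Finally, since $\ol{M}_n$ is compact and Hausdorff (stated after the Corollary) and $\ol{\WW}_n$ is compact, a continuous bijection between them is automatically a homeomorphism, so the continuity of $\Theta^{-1}$ comes for free once $\Theta$ is shown continuous and bijective. \textbf{The main obstacle} I anticipate is the consistency check for continuity across strata: one must ensure that the independently chosen cross-ratio coordinates on neighboring charts, expressed through the recursion relation in Proposition~2.1(d), transform smoothly under the exponential dictionary, and in particular that the limits $\lambda(e) \to 0$ and $\lambda(e) \to \infty$ match the boundary values $1$ and $0$ prescribed by the chart definition in a chart-independent way. This is essentially a careful bookkeeping of which four-tuples $(i,j,k,l)$ of leaves detect which edge, and verifying that the resulting transition maps are continuous; the compactness argument then removes any need to analyze $\Theta^{-1}$ directly.
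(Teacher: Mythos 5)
Your core mechanism --- the exponential dictionary $-\rho_{ijkl}(e)=e^{-\lambda(e)}$ plus the ``continuous bijection from a compact space to a Hausdorff space'' trick --- is the right one, and it is essentially how the paper treats the colored analog (Theorem \ref{homeo3}, via simple ratios $\phi_{T,e}=e^{-\lambda(e)}$; Theorem \ref{homeo2} itself is stated as well known and not proved there). But your framing contains a genuine error: the two stratifications are \emph{dual}, not matched. The cell $\WW_{n,T}$ has dimension $|E(T)|$ while $M_{n,T}$ has codimension $|E(T)|$ in $\ol{M}_n$, and the degenerations run in opposite directions: $\ol{\WW}_{n,T}$ accumulates on coarser trees as lengths go to $0$, whereas $\ol{M}_{n,T}$ accumulates on finer types as nodes form. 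In particular $\Theta(\WW_{n,T})$ lands in the open stratum of smooth disks, not in $M_{n,T}$, so ``define $\Theta$ one stratum at a time, compatibly with the (same) closure relations'' is not coherent as stated. The confusion surfaces concretely in your last step: by your own dictionary a node appears exactly when $-\rho_{ijkl}(e)=0$, i.e.\ when $\lambda(e)=\infty$, so a point of $\Theta(\ol{\WW}_{n,T})$ has combinatorial type exactly $T$ if and only if \emph{every} edge of $T$ has infinite length. The intersection $\Theta(\ol{\WW}_{n,T})\cap M_{n,T}$ is therefore the corner $\lambda\equiv\infty$, not the cone point $\lambda\equiv 0$, which maps to a smooth, equally-spaced configuration lying in no $M_{n,T}$ with $E(T)\neq\emptyset$.

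Two further gaps. First, for non-maximal $T$ a vertex of valence $m>3$ corresponds to a disk component with $m-3$ internal moduli about which the metric $\lambda$ says nothing; your prescription fixes only the $|E(T)|$ edge coordinates, so $\Theta$ is not yet well defined on such strata, and the ``single point'' conclusion needs these moduli pinned down (to the equally-spaced configuration, obtained by refining $T$ to a maximal tree with length-zero edges). Second, bijectivity of the glued map is the combinatorial heart of the theorem: one must show that the images $\Theta(\ol{\WW}_{n,T})$, as $T$ runs over maximal trees, cover $\ol{M}_n$ with disjoint interiors. You assert this rather than prove it. This is precisely where the paper's simple ratios are better adapted than cross-ratios: the chamber for $T$ is exactly $\{\phi_{T,e}\in(0,1]\ \forall e\}$, and the ordering of the gaps $x_i=z_{i+1}-z_i$ recovers the tree, which makes the tiling (and hence injectivity and surjectivity) transparent; with cross-ratios this step still has to be supplied.
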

\noindent In other words, the realization as metric trees is dual, in
a CW-sense, to the realization as marked disks.  We give a proof of
the corresponding statement for the multiplihedra in the next section.

\section{The multiplihedra} 

\label{multiplihedron}

Stasheff \cite{stasheffbook} introduced a family of $CW$-complexes
called the {\em multiplihedra}, which play the same role for maps of loop spaces as the associahedra play in the recognition principle
for loop spaces.  The $n$-th multiplihedron $J_n$ is a complex of
dimension $n - 1$ whose vertices correspond to ways of bracketing $n$
variables $x_1,\ldots,x_n$ and applying an operation, say $f$.  The
multiplihedron $J_3$ is the hexagon shown in Figure \ref{K31}.
\begin{figure}[h]
\includegraphics[height=2in]{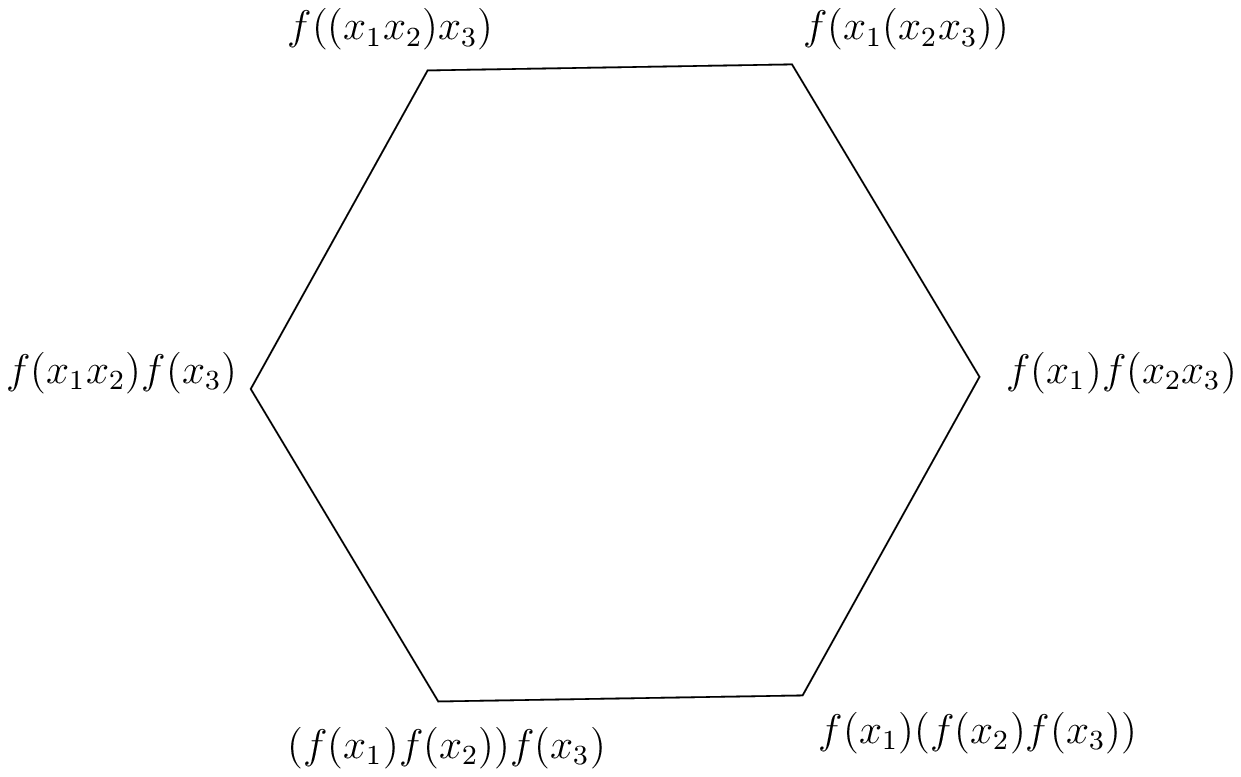}
\caption{ Vertices of $J_{3}$}
\label{K31}
\end{figure}

\begin{figure}[ht]
\setlength{\unitlength}{0.00047489in}
\begingroup\makeatletter\ifx\SetFigFont\undefined%
\gdef\SetFigFont#1#2#3#4#5{%
  \reset@font\fontsize{#1}{#2pt}%
  \fontfamily{#3}\fontseries{#4}\fontshape{#5}%
  \selectfont}%
\fi\endgroup%
{\renewcommand{\dashlinestretch}{30}
\begin{picture}(7625,5428)(0,-10)
\texture{44555555 55aaaaaa aa555555 55aaaaaa aa555555 55aaaaaa aa555555 55aaaaaa 
	aa555555 55aaaaaa aa555555 55aaaaaa aa555555 55aaaaaa aa555555 55aaaaaa 
	aa555555 55aaaaaa aa555555 55aaaaaa aa555555 55aaaaaa aa555555 55aaaaaa 
	aa555555 55aaaaaa aa555555 55aaaaaa aa555555 55aaaaaa aa555555 55aaaaaa }
\put(1995,3448){\shade\ellipse{90}{90}}
\put(1995,3448){\ellipse{90}{90}}
\put(5595,3448){\shade\ellipse{90}{90}}
\put(5595,3448){\ellipse{90}{90}}
\put(3795,3448){\shade\ellipse{90}{90}}
\put(3795,3448){\ellipse{90}{90}}
\path(301.066,5184.360)(195.000,5248.000)(258.640,5141.934)
\path(195,5248)(2895,2548)(4695,4348)(3795,5248)
\path(3901.066,5184.360)(3795.000,5248.000)(3858.640,5141.934)
\path(5531.360,5141.934)(5595.000,5248.000)(5488.934,5184.360)
\path(5595,5248)(4695,4348)
\path(2895,2548)(3795,1648)(7395,5248)
\path(7331.360,5141.934)(7395.000,5248.000)(7288.934,5184.360)
\path(3795,1648)(3795,748)
\path(3765.000,868.000)(3795.000,748.000)(3825.000,868.000)
\put(3795,5248){\makebox(0,0)[lb]{{{$x_2$}}}}
\put(5595,5248){\makebox(0,0)[lb]{{{$x_3$}}}}
\put(7395,5248){\makebox(0,0)[lb]{{{$x_4$}}}}
\put(15,5203){\makebox(0,0)[lb]{{{$x_1$}}}}
\put(3165,73){\makebox(0,0)[lb]{{{($f(x_1)f(x_2x_3))f(x_4)$}}}}
\put(4200,3763){\makebox(0,0)[lb]{{{$x_2x_3$}}}}
\put(3345,2818){\makebox(0,0)[lb]{{{$f(x_2x_3)$}}}}
\put(2040,2908){\makebox(0,0)[lb]{{{$f(x_1)$}}}}
\put(4965,2683){\makebox(0,0)[lb]{{{$f(x_4)$}}}}
\put(2355,1873){\makebox(0,0)[lb]{{{$f(x_1)f(x_2x_3)$}}}}
\end{picture}
}
\caption{Tree for $(f(x_1)f(x_2 x_3)) f(x_4)$}
\label{treeex} \end{figure}

The facets of $J_n$ are of two types.  First, there are the images
of the inclusions
$$ J_{i_1} \times \ldots \times J_{i_j} \times K_j \to J_{n} $$
for partitions $i_1 + \ldots + i_j = n$, and secondly the images of
the inclusions
$$ J_{n-e+1} \times K_e \to J_n $$
for $2 \leq e \leq n$.  One constructs the multiplihedron inductively
starting from setting $J_{2}$ and $K_{3}$ equal to closed intervals.

Each vertex corresponds to a rooted tree with two types of vertices,
the first a trivalent vertex corresponding to a bracketing of two
variables and the second a bivalent vertex corresponding to an
application of $f$, see Figure \ref{treeex}.  Dualizing the rooted
tree gives a triangulation of the $n + 1$-gon together with a
partition of the two-cells into two types, depending on whether they
occur before or after a bivalent vertex in a path from the root, see
Figure \ref{parex}.
\begin{figure}[h]
\setlength{\unitlength}{0.00047489in}
\begingroup\makeatletter\ifx\SetFigFont\undefined%
\gdef\SetFigFont#1#2#3#4#5{%
  \reset@font\fontsize{#1}{#2pt}%
  \fontfamily{#3}\fontseries{#4}\fontshape{#5}%
  \selectfont}%
\fi\endgroup%
{\renewcommand{\dashlinestretch}{30}
\begin{picture}(2495,2503)(0,-10)
\path(465,2323)(2265,2323)(2265,523)
	(465,523)(465,2323)
\thicklines
\path(465,523)(2265,2323)(2265,523)
\put(15,1423){\makebox(0,0)[lb]{{{$x_1$}}}}
\put(1365,2323){\makebox(0,0)[lb]{{{$x_2$}}}}
\put(2265,1423){\makebox(0,0)[lb]{{{$x_3$}}}}
\put(915,73){\makebox(0,0)[lb]{{{$f(x_1x_2)f(x_3)$}}}}
\end{picture}
}
\caption{Triangulation corresponding to $f(x_1x_2)f(x_3)$}
\label{parex}
\end{figure}
The edges of $J_{n}$ are of two types: 
(a) A change in bracketing
$ \ldots x_{i-1}(x_i x_{i+1}) \ldots  \mapsto (x_{i-1} x_i) x_{i+1}  $
or vice-versa;
(b) A move of the form
$ \ldots f(x_i x_{i+1}) \ldots \mapsto f(x_i) f(x_{i+1}) \ldots $
or vice versa, which corresponds to moving one of the 
bivalent vertices past a trivalent vertex, after
which it becomes a pair of bivalent vertices, or vice-versa;
see Figure \ref{splitting}. 
\begin{figure}[h]
\setlength{\unitlength}{0.00047489in}
\begingroup\makeatletter\ifx\SetFigFont\undefined%
\gdef\SetFigFont#1#2#3#4#5{%
  \reset@font\fontsize{#1}{#2pt}%
  \fontfamily{#3}\fontseries{#4}\fontshape{#5}%
  \selectfont}%
\fi\endgroup%
{\renewcommand{\dashlinestretch}{30}
\begin{picture}(5424,1839)(0,-10)
\texture{44555555 55aaaaaa aa555555 55aaaaaa aa555555 55aaaaaa aa555555 55aaaaaa 
	aa555555 55aaaaaa aa555555 55aaaaaa aa555555 55aaaaaa aa555555 55aaaaaa 
	aa555555 55aaaaaa aa555555 55aaaaaa aa555555 55aaaaaa aa555555 55aaaaaa 
	aa555555 55aaaaaa aa555555 55aaaaaa aa555555 55aaaaaa aa555555 55aaaaaa }
\put(1362,912){\shade\ellipse{180}{180}}
\put(1362,912){\ellipse{180}{180}}
\put(4062,1362){\shade\ellipse{180}{180}}
\put(4062,1362){\ellipse{180}{180}}
\put(4062,462){\shade\ellipse{180}{180}}
\put(4062,462){\ellipse{180}{180}}
\put(912,912){\shade\ellipse{180}{180}}
\put(912,912){\ellipse{180}{180}}
\put(4512,912){\shade\ellipse{180}{180}}
\put(4512,912){\ellipse{180}{180}}
\path(2262,912)(3162,912)
\path(2262,912)(3162,912)
\path(3042.000,882.000)(3162.000,912.000)(3042.000,942.000)
\path(118.066,1748.360)(12.000,1812.000)(75.640,1705.934)
\path(12,1812)(912,912)(12,12)
\path(75.640,118.066)(12.000,12.000)(118.066,75.640)
\path(912,912)(1812,912)
\path(1692.000,882.000)(1812.000,912.000)(1692.000,942.000)
\path(4512,912)(5412,912)
\path(5292.000,882.000)(5412.000,912.000)(5292.000,942.000)
\path(3718.066,1748.360)(3612.000,1812.000)(3675.640,1705.934)
\path(3612,1812)(4512,912)(3612,12)
\path(3675.640,118.066)(3612.000,12.000)(3718.066,75.640)
\end{picture}
}

\caption{Splitting of bivalent vertices}
\label{splitting} \end{figure}

\section{Quilted disks}

\begin{definition} 
A {\em quilted disk} is a 
closed disk $D \subset \C$ together with a circle $C \subset D$ (the
{\em seam} of the quilt) tangent to a unique point in the boundary.
Thus $C$ divides the interior of $D$ into two components.  Given
quilted disks $(D_0,C_0)$ and $(D_1,C_1)$, a {\em isomorphism} from
$(D_0,C_0)$ to $(D_1,C_1)$ is a holomorphic isomorphism $D_0 \to D_1$
mapping $C_0$ to $C_1$.  Any quilted disk is isomorphic to the pair
$(D,C)$ where $D$ is the unit disk in the complex plane and $C$ the
circle of radius $1/2$ passing through $1$ and $0$.  Thus the
automorphism group of $(D,C)$ is canonically isomorphic to the group
$T \subset SL(2,R)$ of translations by real numbers.

Let $n \ge 2$ be an integer.  A {\em quilted disk with $n+1$ markings
  on the boundary} consists of a disk $D \subset \C$ (which we may
take to be the unit disk), distinct points $z_0,\ldots,z_n \in
\partial D$ and a circle $C \subset D$ tangent to $z_0$, of radius
between 0 and 1.  A {\em morphism of quilted disks} from
$(D_0,C_0;z_0,\ldots,z_n) \to (D_1,C_0;w_0,\ldots,w_n)$ is a
holomorphic isomorphism $D_0 \to D_1$ mapping $C_0$ to $C_1$ and $z_j$
to $w_j$ for $j = 0,\ldots,n$.  
\end{definition} 

Let $M_{n,1}$ be the set of isomorphism classes of $n+1$-marked
quilted disks.  We compactify $M_{n,1}$ by allowing nodal quilted
disks whose combinatorial type is described as follows.  

\begin{definition}
A {\em colored, rooted ribbon tree} is a ribbon tree $T=(\ol{E}(T),
V(T))$ together with a distinguished subset $V_{col}(T) \subset V(T)$
of {\em colored vertices}, such that in any non-self-crossing path
from a leaf $e_i$ to the root $e_0$, exactly one vertex is a colored
vertex.
\end{definition}

\begin{definition} 
A {\em nodal $(d+1)$-quilted disk} $S$ is a collection of quilted and
unquilted marked disks, identified at pairs of points on the boundary.
The combinatorial type of $S$ is a colored rooted ribbon tree ${T}$, 
where the colored vertices represent quilted disks, and the remaining
vertices represent unquilted disks.
A nodal quilted disk is {\em stable} if and only if
\begin{enumerate}
\item Each quilted disk component contains at least $2$ singular or
marked points;
\item Each unquilted disk component contains at least
$3$ singular or marked points.
\end{enumerate}
\end{definition} 
\noindent Thus the automorphism group of any disk component of a stable disk is
trivial, and from this one may derive that the automorphism group of
any stable $n+1$-marked nodal quilted disk is also trivial.  

The appearance of the two kinds of disks can be explained in the
language of bubbling as in \cite{mcd-sal}.  Suppose that $S_\alpha$ is
a sequence of quilted disks.  We identify the complement of $z_0$ with
the upper half-space $\H$, so that the circle $C_\alpha$ becomes a horizontal
line $L_\alpha \subset \H$.  After a sequence of automorphisms
$\varphi_\alpha$, we may assume that $z_{1,\alpha} - z_{n,\alpha}$ is
constant.  If the line $L_\alpha$ approaches the real axis, or two
points $z_{i,\alpha}, z_{j,\alpha}$ converge then we re-scale so that
the distances remain finite and encode the limit of the re-scaled data
as a bubble.  There are three-different types of bubbles: either
$L_\alpha \to \partial \H$ in the limit, in which case we say that the
resulting bubble is unquilted, or $L_\alpha$ approaches a fixed line
$L_\infty$, in which case the bubble is a quilted disk, or $L_\alpha$ goes 
to $\infty$, in which case the bubble is also unquilted.  Thus the
limiting sequence is a bubble tree, whose bubbles are of the types
discussed above.

Let $\ol{M}_{n,1}$ denote the set of isomorphism classes of stable
$n+1$-marked nodal quilted disks.  For example, $\ol{M}_{3,1}$ is a
hexagon.
\begin{figure}[h]
\includegraphics[height=3in]{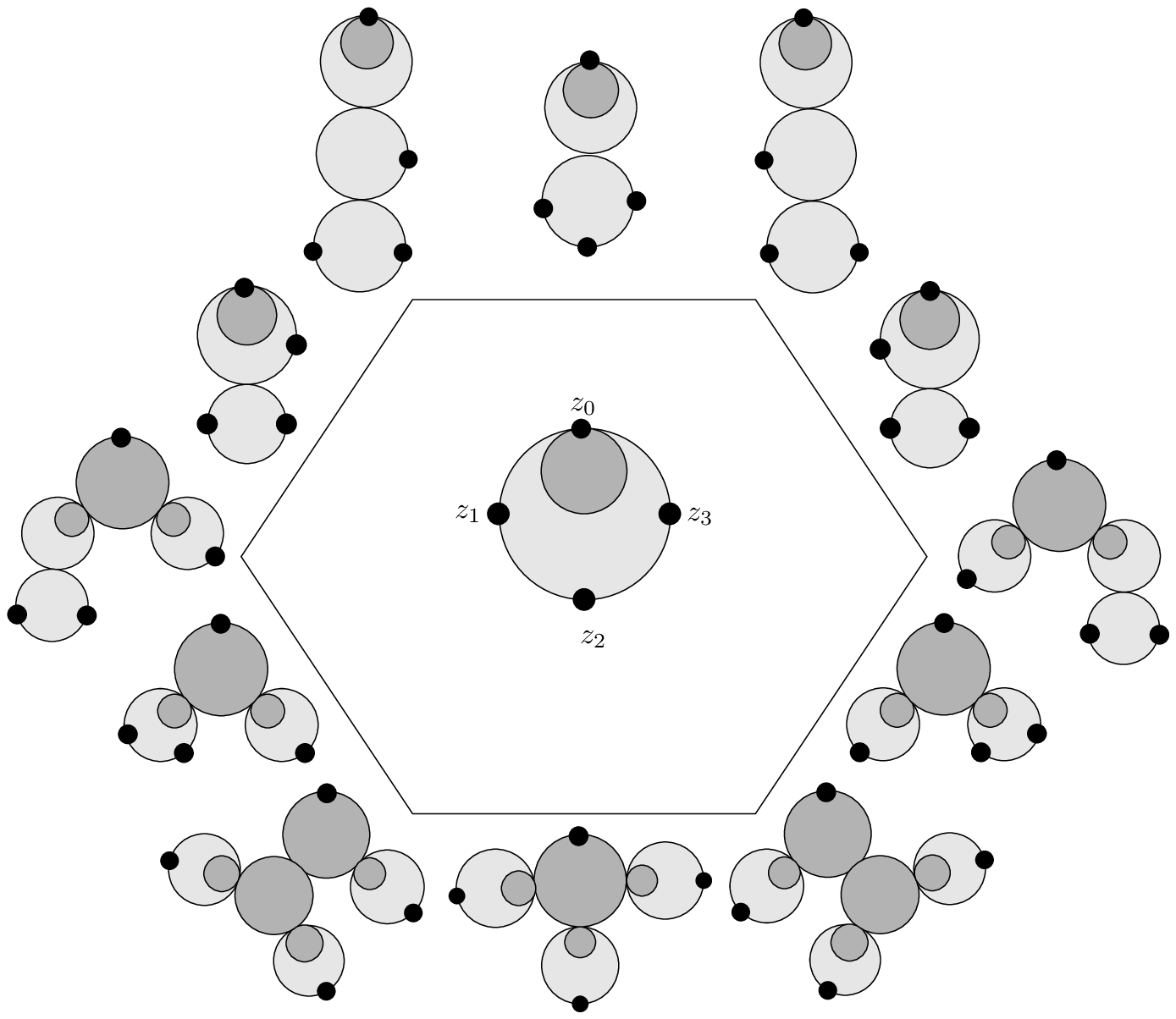}
\caption{$\ol{M}_{3,1}$}
\label{M31}
\end{figure}

\section{The canonical embedding}  

$\ol{M}_{n,1}$ admits a canonical embedding into a product of closed
intervals via a natural generalization of cross-ratios.  Let $D$
denote the unit disk, $C$ a circle in $D$ passing through a unique
point $z_0$ and $z_1,z_2 \in D$ points in $D$ such that $z_0,z_1,z_2$
are distinct.  Let $w$ be a point in $C$ not equal to $z_0$.  Define
$ \rho_{3,1}(D,C,z_1,z_2) = \Im( \rho(z_0,z_1,z_2,w)) ,$
the imaginary part of $ \rho(z_0,z_1,z_2,w)$.
$\rho_{3,1}$ is independent of the choice of $w$ and invariant under
the group of automorphisms of the disk and so defines a map $
\rho_{3,1}: M_{3,1} \to (0,\infty) .$ We extend $\rho_{3,1}$ to
$\ol{M}_{3,1}$ by setting $\rho_{3,1}(S) = 0$ if $S$ is the $3$-marked
quilted nodal disk with three components, and $\rho_{3,1}(S) = \infty$
if $S$ is the $3$-marked nodal disk with two components.  Thus
$\rho_{3,1}$ extends to a bijection
$$ \rho_{3,1}: \ol{M}_{3,1} \to [0,\infty ].$$
More generally, given $n \ge 3$ and a pair $i,j$ of distinct, non-zero
vertices, let ${T}_{ij}$ denote the minimal connected subtree of
${T}$ containing the semi-infinite edges corresponding to
$z_i,z_j,z_0$.  There are three possibilities for ${T}_{ij}$,
depending on whether the quilted vertex appears closer or further away
than the trivalent vertex from $z_0$, or equals the trivalent vertex.
\begin{figure}[h]
\setlength{\unitlength}{0.00037489in}
\begingroup\makeatletter\ifx\SetFigFont\undefined%
\gdef\SetFigFont#1#2#3#4#5{%
  \reset@font\fontsize{#1}{#2pt}%
  \fontfamily{#3}\fontseries{#4}\fontshape{#5}%
  \selectfont}%
\fi\endgroup%
{\renewcommand{\dashlinestretch}{30}
\begin{picture}(7224,1839)(0,-10)
\texture{44555555 55aaaaaa aa555555 55aaaaaa aa555555 55aaaaaa aa555555 55aaaaaa 
	aa555555 55aaaaaa aa555555 55aaaaaa aa555555 55aaaaaa aa555555 55aaaaaa 
	aa555555 55aaaaaa aa555555 55aaaaaa aa555555 55aaaaaa aa555555 55aaaaaa 
	aa555555 55aaaaaa aa555555 55aaaaaa aa555555 55aaaaaa aa555555 55aaaaaa }
\put(462,1362){\shade\ellipse{180}{180}}
\put(462,1362){\ellipse{180}{180}}
\put(462,462){\shade\ellipse{180}{180}}
\put(462,462){\ellipse{180}{180}}
\put(3612,912){\shade\ellipse{180}{180}}
\put(3612,912){\ellipse{180}{180}}
\put(6762,912){\shade\ellipse{180}{180}}
\put(6762,912){\ellipse{180}{180}}
\path(2818.066,1748.360)(2712.000,1812.000)(2775.640,1705.934)
\path(2712,1812)(3612,912)(2712,12)
\path(2775.640,118.066)(2712.000,12.000)(2818.066,75.640)
\path(3612,912)(4512,912)
\path(4392.000,882.000)(4512.000,912.000)(4392.000,942.000)
\path(5518.066,1748.360)(5412.000,1812.000)(5475.640,1705.934)
\path(5412,1812)(6312,912)(5412,12)
\path(5475.640,118.066)(5412.000,12.000)(5518.066,75.640)
\path(6312,912)(7212,912)
\path(7092.000,882.000)(7212.000,912.000)(7092.000,942.000)
\path(912,912)(1812,912)
\path(1692.000,882.000)(1812.000,912.000)(1692.000,942.000)
\path(118.066,1748.360)(12.000,1812.000)(75.640,1705.934)
\path(12,1812)(912,912)(12,12)
\path(75.640,118.066)(12.000,12.000)(118.066,75.640)
\end{picture}
}
\caption{Tree types for $J_{3}$}
\end{figure}
In the first, resp. third case define $\rho_{ij}(S) = 0$ resp
$\infty$.  In the second case let $(D,C)$ denote the disk component
corresponding to the trivalent vertex, $w_i,w_j \in \partial D$ the
points corresponding to the images in $\partial D$ of the marked points $z_i,z_j$, and
define
$$ \rho_{ij}(S) = \rho_{3,1}(D,C,w_i,w_j) .$$
The $\rho_{ij}$ have properties very similar to the $\rho_{ijkl}$:

\begin{proposition} For all quilted disks $S$, 
\begin{enumerate}
\item {\bf (Invariance):} for all $\phi \in SL(2,\R)$,
  $\rho_{ij}(\phi(S)) = \rho_{ij}(S)$.
\item{\bf (Symmetry):} $\rho_{ij}(S) = -\rho_{ji}(S)$.
\item{{\bf (Normalization):}} 
$\rho_{ij}(S) = \left\{
\begin{array}{cc} 
\infty, & \mbox{if}\ i\neq j\ \mbox{and}\ L = \R + i \infty,\\
0, & \mbox{if}\ i\neq j\ \mbox{and}\ L = \R + i 0 .
\end{array}\right.$\\
\item {\bf (Recursion):} 
$\rho_{ik}(S) = \frac{\rho_{ij}(S)}{\rho_{jk}(S)}$
\item {\bf (Relations):}  $ \rho_{jk} =
  \frac{\rho_{ij}}{-\rho_{ijk0}} \label{rel1}, \quad \rho_{ik} =
  \frac{\rho_{ij}}{1-\rho_{ijk0}}\label{rel2}.$
\end{enumerate}
\end{proposition}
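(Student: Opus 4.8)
The plan is to reduce every assertion to an explicit computation in a normalized half-plane model, after which (a)--(e) become elementary identities. First I would fix, for a smooth (non-nodal) quilted disk $S$, the representative in which the tangency point $z_0$ of the seam is sent to $\infty$: identifying $D \ssm \{z_0\}$ with $\H$, the seam $C$ becomes a horizontal line $L = \R + ih$ with $h \in (0,\infty)$, and the remaining markings $z_1,\ldots,z_n$ become real numbers. The residual automorphism group is the real affine group $z \mapsto az+b$ with $a>0$, under which $h \mapsto ah$ and $z_j - z_i \mapsto a(z_j - z_i)$. Hence the quantity $\rho_{ij}(S) = h/(z_j - z_i)$ is invariant under the residual group and descends to $M_{n,1}$; this is the explicit formula I will use throughout.

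The crux is well-definedness, namely independence of the auxiliary point $w \in C$. In the normalized model every point of the seam has the same imaginary part $h$, so the imaginary part of the cross-ratio read off along $C$ does not depend on which $w \in C$ is chosen; equivalently, the expression above involves only $h$ and the $z_\bullet$ and not $w$. This simultaneously shows that $\rho_{3,1}$, and hence each $\rho_{ij}$, is well defined and supplies the coordinate expression. Invariance (a) is then immediate: for $\phi \in SL(2,\R) \subset SL(2,\C)$ the cross-ratio $\rho_4$ is exactly invariant as a complex number, so its imaginary part is unchanged; since $\phi$ carries the seam of $S$ to the seam of $\phi(S)$ and we may evaluate on the image point $\phi(w)$, we get $\rho_{ij}(\phi(S)) = \rho_{ij}(S)$.

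With the formula in hand, properties (b)--(e) are direct substitutions. Symmetry (b) is the identity $h/(z_i - z_j) = -h/(z_j - z_i)$. For (e) I would compute the ordinary four-point cross-ratio in the same coordinates, using the normalization and invariance already recorded for $\rho_{ijkl}$, to get $\rho_{ijk0} = (z_k - z_j)/(z_i - z_j)$; substituting this into $\rho_{ij}/(-\rho_{ijk0})$ and $\rho_{ij}/(1-\rho_{ijk0})$ and simplifying yields $\rho_{jk}$ and $\rho_{ik}$ respectively. The recursion (d) then follows formally by eliminating $\rho_{ijk0}$ between the two identities of (e); equivalently, it is the additivity $(z_k - z_i)/h = (z_j - z_i)/h + (z_k - z_j)/h$. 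Finally, normalization (c) is the observation that $\rho_{ij} = h/(z_j-z_i)$ tends to $\infty$ as the seam rises to $L = \R + i\infty$ and to $0$ as it descends to $L = \R + i0$.

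It then remains to extend each identity across the boundary strata of $\ol{M}_{n,1}$ by matching them with the combinatorial definition of $\rho_{ij}$ on nodal disks: when the minimal subtree $T_{ij}$ through $z_i,z_j,z_0$ places the quilted vertex strictly closer to (resp. farther from) $z_0$ than the trivalent vertex the definition assigns $\rho_{ij}(S)=0$ (resp. $\infty$), and otherwise $\rho_{ij}$ is computed on the single trivalent quilted component by $\rho_{3,1}$. I expect the main obstacle to be precisely this boundary bookkeeping: one must check, case by case over the three possible shapes of $T_{ij}$, that the relations (e) and the recursion (d) remain valid in $[0,\infty]$ under the boundary conventions and do not degenerate into indeterminate forms such as $0/0$ or $\infty/\infty$ on the strata where they are asserted, exactly as in the corresponding extension of the associahedron cross-ratios. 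The interior identities, together with continuity of $\rho_{ij}$ for the topology pulled back from the product of intervals, then give the proposition on all of $\ol{M}_{n,1}$.
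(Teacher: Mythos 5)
The paper states this proposition without proof, so you are supplying the missing computation, and for (a), (b), (c) and (e) you do so correctly: the normalized formula $\rho_{ij}(S)=h/(z_j-z_i)$ (with $z_0=\infty$, markings on $\R$, seam the horizontal line at height $h$) is exactly the one the paper adopts later when it sets $\rho_{ij}=(\phi(1)(z_j-z_i))^{-1}$, and your verification of the two relations in (e) against $\rho_{ijk0}=(z_k-z_j)/(z_i-z_j)$ checks out. One caveat on well-definedness: your justification ("every point of the seam has the same imaginary part $h$, so the imaginary part of the cross-ratio does not depend on $w$") is a non sequitur as written, since M\"obius maps do not preserve imaginary parts; with the paper's literal ordering, under which $\rho(z_0,z_1,z_2,w)$ is the image of $w$ under the map sending $z_0\mapsto 0$, $z_1\mapsto 1$, $z_2\mapsto\infty$, the imaginary part genuinely varies along the seam. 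Independence of $w$ holds only because the normalization sending $z_0$ to $\infty$ makes the relevant M\"obius map real-affine, hence carries the horizontal seam to a horizontal line; you should say this explicitly and note that the displayed ordering must be permuted for the definition to be consistent.

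The genuine gap is in (d). Eliminating $\rho_{ijk0}$ from the two identities of (e) yields $1/\rho_{ik}=1/\rho_{ij}+1/\rho_{jk}$, which is the additivity you write down, but that is \emph{not} the stated recursion $\rho_{ik}=\rho_{ij}/\rho_{jk}$, and the two are not equivalent. With your own formula, $\rho_{ij}/\rho_{jk}=(z_k-z_j)/(z_j-z_i)=-\rho_{ijk0}$, which is independent of $h$, whereas $\rho_{ik}=h/(z_k-z_i)$ is not: taking $z_i=0$, $z_j=1$, $z_k=2$, $h=1$ gives $\rho_{ij}/\rho_{jk}=1$ but $\rho_{ik}=1/2$. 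So (d) as printed is inconsistent with (e) and with the paper's later definition of $\rho_{ij}$; the true statement is the harmonic additivity (equivalently, $\rho_{ij}/\rho_{jk}=-\rho_{ijk0}$, a rearrangement of the first relation in (e)). Declaring that the additivity "is" the recursion papers over this discrepancy: you must either prove the corrected identity and flag the misprint, or concede that (d) as stated is false and unprovable. The boundary bookkeeping over the three shapes of $T_{ij}$ that you defer is reasonable to defer, but it should be carried out for the corrected identity, where one does need to rule out indeterminate forms on the strata with $\rho_{ij}\in\{0,\infty\}$.
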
 

By the invariance property, $\rho_{ij}$ descends to a map 
$$ \ol{M}_{n,1} \to [0,\infty] .$$
In addition, for any four distinct indices $i,j,k,l$ we have the
cross-ratio
$ \rho_{ijkl}: \ol{M}_{n,1} \to [0,\infty] $
defined in the previous section, obtained by treating the quilted disk
component as an ordinary component.

\begin{theorem}  The map 
$$ \rho_{n,1}: \ol{M}_{n,1} \to [-\infty,0]^N \times
  [0,\infty]^{{n(n-1)/2}}, \quad N= {d+1\choose{4}} $$
obtained from all the cross-ratios is injective, and its image is closed. 
\end{theorem}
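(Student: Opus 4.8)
The plan is to bootstrap from the embedding theorem already available for unquilted disks. Forgetting the seam of each quilted component and contracting any component that thereby becomes unstable defines a continuous stabilization map $\pi: \ol{M}_{n,1} \to \ol{M}_n$, and by construction the projection of $\rho_{n,1}$ onto the $[-\infty,0]^N$ factor is exactly $\rho_n \circ \pi$, where $\rho_n$ is the cross-ratio map of the preceding Corollary (a thin component carries no four marked points, so stabilization leaves every $\rho_{ijkl}$ unchanged). Thus the unquilted cross-ratios $\rho_{ijkl}$ already determine the underlying stabilized marked disk $\Sigma = \pi(S)$ up to isomorphism, and the only remaining task is to recover the quilting data — the colored rooted ribbon tree refining the type of $\Sigma$, together with the seam circle on each quilted component — from the values $\rho_{ij} \in [0,\infty]$.

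For injectivity I would first reconstruct the combinatorial type. The extreme-value pattern of the $\rho_{ijkl}$ (which are $-\infty$, interior, or $0$) fixes the underlying ribbon tree as in Figure \ref{edge}, while for each pair $i,j$ the value $\rho_{ij} = 0$, $\rho_{ij} = \infty$, or $\rho_{ij}\in(0,\infty)$ records whether the quilted vertex lies above, below, or at the trivalent vertex separating the branches $i$ and $j$; running over all pairs, this pattern pins down the colored antichain, and hence the position of every quilted component, including the rigid ``thin'' ones that $\pi$ contracts. It then remains to recover the seam radius on each quilted component carrying a continuous modulus. Such a component has at least three special points, so there is a pair $i,j$ meeting precisely at its quilted vertex; for this pair $\rho_{ij} = \rho_{3,1}(D,C,w_i,w_j)$, and since the conformal positions of $z_0, w_i, w_j$ are already known from $\Sigma$, the single number $\rho_{ij}$ determines the seam circle $C$ uniquely. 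A quilted component with only two special points has its seam fixed by the residual scaling gauge and so carries nothing to recover. The Recursion $\rho_{ik} = \rho_{ij}/\rho_{jk}$ and the Relations $\rho_{jk} = \rho_{ij}/(-\rho_{ijk0})$ guarantee that the various $\rho_{ij}$ attached to one component are mutually consistent and propagate correctly across nodes, so $S$ is determined up to isomorphism and $\rho_{n,1}$ is injective.

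For closedness I would identify the image with the closed subset $Z$ of $[-\infty,0]^N \times [0,\infty]^{n(n-1)/2}$ cut out by the relations of the two Propositions: the closedness of $\rho_n(\ol{M}_n)$ from Theorem \ref{injection_n} on the first factor, together with the Symmetry, Normalization, Recursion and Relations equations, which are closed conditions (suitably read at the values $0$ and $\infty$). The Propositions give the inclusion $\rho_{n,1}(\ol{M}_{n,1}) \subseteq Z$; the reverse inclusion is a realization statement, asserting that every admissible tuple is the cross-ratio vector of an honest stable quilted disk, and is proved by running the reconstruction of the previous paragraph in reverse, building the disk component by component and inserting or collapsing quilted components according to which coordinates attain $0$ or $\infty$. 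I expect this realization step — equivalently, the bubbling argument in the half-plane model sketched above, which exhibits the limit of a sequence whose cross-ratios converge as a stable quilted disk — to be the main obstacle, since it requires matching the degeneration patterns of the $\rho_{ijkl}$ and the $\rho_{ij}$ simultaneously and checking that the two families remain compatible in every boundary stratum.
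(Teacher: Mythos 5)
Your proposal is correct and follows essentially the same route as the paper: the paper's stated proof simply defers to Theorem \ref{bijection} (via Lemma \ref{refine_lemma}), whose content is exactly your two-step reconstruction --- recover the underlying stable marked curve from the $\rho_{ijkl}$ using the McDuff--Salamon embedding, then locate the colored vertices from the $0$/finite/$\infty$ trichotomy of the $\rho_{ij}$ and recover each seam from a single finite nonzero $\rho_{ij}$, with closedness established by the same component-by-component realization argument you identify as the crux. The only difference is that the paper carries this out once over $\C$ and restricts to the real locus, whereas you work directly with quilted disks.
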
  

\begin{proof} The proof is similar that of Theorem \ref{injection_n}.
In fact, it is a corollary of Theorem \ref{bijection} in Section
\ref{complex_multi}, which deals with a complex space
$\ol{M}_{n,1}(\C)$ in which $\ol{M}_{n,1}$ sits as a part of the real
locus.
\end{proof}

We define the topology on $\ol{M}_{n,1}$ by pulling back the topology
on the codomain.  Since the codomain is Hausdorff and compact,

\begin{corollary}  
$\ol{M}_{n,1}$ is Hausdorff and compact. 
\end{corollary}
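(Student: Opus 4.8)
The plan is to deduce both properties by elementary point-set topology from the preceding theorem, exploiting the fact that, by construction, $\ol{M}_{n,1}$ carries the initial (pullback) topology induced by the single map $\rho_{n,1}$. The genuine work has already been done in establishing injectivity and closedness of the image; what remains is purely formal.

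First I would record that the codomain is compact and Hausdorff. Each factor $[-\infty,0]$ and $[0,\infty]$ is homeomorphic to a closed interval, hence compact and Hausdorff, and a finite product of compact Hausdorff spaces is again compact and Hausdorff. Second, I would check that $\rho_{n,1}$ is a topological embedding onto its image. Since the open sets of $\ol{M}_{n,1}$ are by definition exactly the preimages $\rho_{n,1}^{-1}(U)$ of open sets $U$ of the codomain, the map is tautologically continuous, and for any such $U$ one has $\rho_{n,1}(\rho_{n,1}^{-1}(U)) = U \cap \im(\rho_{n,1})$, which is open in the subspace topology on the image. Together with the injectivity asserted by the theorem, this makes $\rho_{n,1}$ a continuous open bijection onto $\im(\rho_{n,1})$, and therefore a homeomorphism onto its image.

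Finally, I would invoke the closedness of the image. A closed subset of a compact space is compact, and any subspace of a Hausdorff space is Hausdorff; since the theorem guarantees that $\im(\rho_{n,1})$ is closed in the compact Hausdorff codomain, the image is itself compact and Hausdorff. As $\ol{M}_{n,1}$ is homeomorphic to this image, it inherits both properties, which is exactly the assertion.

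There is essentially no obstacle at this level: the entire substantive content, namely injectivity of the cross-ratio map and closedness of its image, is supplied by the preceding theorem (and ultimately by the complex statement for $\ol{M}_{n,1}(\C)$ referenced in its proof). The only point deserving a moment's attention is that the pullback topology combined with injectivity genuinely yields an embedding rather than merely a continuous bijection; once this is noted, the conclusion is immediate.
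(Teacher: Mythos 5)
Your argument is correct and is exactly the paper's (largely implicit) reasoning: the topology on $\ol{M}_{n,1}$ is by definition the pullback topology under the injective map $\rho_{n,1}$, whose image is closed in the compact Hausdorff codomain, so $\ol{M}_{n,1}$ is homeomorphic to a closed subspace of a compact Hausdorff space. You have merely spelled out the point-set details that the paper leaves to the reader.
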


\begin{remark}  The maps $\ol{M}_{n,1} \to \ol{M}_4$,
$\ol{M}_{n,1} \to \ol{M}_{3,1}$ are special cases of forgetful
  morphisms: For any subset $I \subset \{ 0,\ldots,n \}$ of size $k$
  we have a map
$ \ol{M}_{n,1} \mapsto \ol{M}_k $
obtained by forgetting the position of the circle and collapsing all
unstable components.  Similarly, for any subset $J \subset \{1,\ldots,
n\}$ of size $l$ we have a map
$ \ol{M}_{n,1} \mapsto \ol{M}_{l,1} $
obtained by forgetting the positions of $z_j, j \notin J$ and
collapsing all unstable disk components.  The topology on
$\ol{M}_{n,1}$ is the minimal topology such that all forgetful
morphisms are continuous and the topology on $\ol{M}_{3,1} \cong
[0,\infty]$, $\ol{M}_4 \cong [0,\infty]$ is induced by the
cross-ratio.
\end{remark}

The full collection of cross-ratios contains a large amount of
redundant information.  In the remainder of this section we discuss
certain ``minimal sets'' of cross-ratios, to be used later.  Let ${T}$
be a combinatorial type in $\ol{M}_{n,1}$.

\begin{definition}  A {\em cross-ratio chart}
associated to ${T}$ is a map $\psi_T: \ol{M}_{n,1,\leq T} \to
(0,\infty)^p\times [0,\infty)^q$ for some $p,q \geq 0$ given by
\begin{enumerate}
\item $p$ coordinates taking values in $(0,\infty)$, obtained by taking $m-3$ coordinates of the form $-\rho_{abcd}$ or $\rho_{ab}$  for each disk component that has $m$ special
  features, where a special feature is either a marked point, a nodal
  point, or an inner circle of radius $ 0 < r < 1$;
\item $q$ coordinates taking values in $[0,\infty)$, obtained by
  choosing (i) a coordinate $-\rho_{abcd} $ for each finite edge in
  $T$, such that a combinatorial type has that edge if and only if
  $\rho_{abcd}=0$; (ii) a coordinate $\rho_{ab} $ for each finite edge
  in ${T}$ that is incident to a \emph{bivalent} colored vertex from
  above, such that $\rho_{ab}=0$ for every combinatorial type modeled
  on that edge; (iii) a coordinate $1/\rho_{ab}$ for each finite edge
  in ${T}$ that is incident to a bivalent colored vertex from below,
  such that $1/\rho_{ab} = 0$ for every combinatorial type modeled on
  that edge.
\end{enumerate}%
\end{definition} 

\begin{proposition}  Let $\psi_T$ be as above. 
On $\ol{M}_{n,1,\leq T}$ all cross-ratios $\rho_{ijkl}$ and
$\rho_{ij}$ are compositions of smooth functions with $\psi_T$.
\end{proposition}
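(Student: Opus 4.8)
The plan is to adapt the proof of Theorem~\ref{chartdisks}, treating the two families of cross-ratios separately and reducing each to data supported on a single disk component, where $\psi_T$ restricts to a full coordinate system. By the invariance properties it suffices to produce, near each stratum $M_{n,1,T_0}$ with $T_0\leq T$, a smooth expression for every $\rho_{ijkl}$ and every $\rho_{ij}$ in terms of the coordinates of $\psi_T$; the global statement then follows by patching over $\ol{M}_{n,1,\leq T}$. The engine throughout is that the recursion and relation identities of the two propositions let one rewrite an arbitrary cross-ratio in terms of a small generating set, so the work is to identify that generating set with (smooth functions of) the chart coordinates and to check that the resulting rational expressions have non-vanishing denominators.

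First I would dispose of the ordinary cross-ratios $\rho_{ijkl}$, which depend only on the underlying marked nodal disk and hence factor through the forgetful morphism $\ol{M}_{n,1,\leq T}\to\ol{M}_{n,\leq T'}$ dropping the seam and contracting the resulting unstable bivalent colored components, $T'$ denoting the underlying ribbon tree. I would then verify that the coordinates of $\psi_T$ determine a cross-ratio chart for $T'$ in the sense of Definition~\ref{crossratios}: each unquilted component contributes its $m-3$ coordinates verbatim; a quilted component with $m$ special features contributes $m-3$ coordinates exactly one of which is a quilted $\rho_{ab}$, leaving the correct number $m-4$ of honest cross-ratios once the seam is forgotten; and each edge of $T'$ created by contracting a bivalent colored vertex is governed by an ordinary cross-ratio $\rho_{ijk0}$ which, combining the recursion and relation properties, satisfies $\rho_{ijk0}=-\rho_{ik}$ and so agrees up to sign with the quilted coordinate the chart assigns to that edge. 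Granting this matching, Theorem~\ref{chartdisks} on $\ol{M}_{n,\leq T'}$ writes every $\rho_{ijkl}$ as a smooth function of the chart coordinates, hence of $\psi_T$.

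For the quilted cross-ratios $\rho_{ij}$ I would run the three-case analysis of the minimal subtree $T_{ij}$ spanned by $z_i,z_j,z_0$. In the two degenerate cases the value is the constant $0$ or $\infty$ on $M_{n,1,T}$, carried along the deeper strata by a boundary coordinate of $\psi_T$, namely $\rho_{ab}\to 0$ (seam collapsing to the tangent point) or $1/\rho_{ab}\to 0$ (seam expanding to the boundary) on the finite edge separating the trivalent from the colored vertex, so smoothness is immediate from the normalization property. In the remaining case the colored and trivalent vertices coincide and $\rho_{ij}=\rho_{3,1}$ of that component; here I would use the relations $\rho_{jk}=\rho_{ij}/(-\rho_{ijk0})$ and $\rho_{ik}=\rho_{ij}/(1-\rho_{ijk0})$ together with the recursion $\rho_{ik}=\rho_{ij}/\rho_{jk}$ to express $\rho_{ij}$ in terms of the single quilted chart coordinate $\rho_{ab}$ of that component and the ordinary cross-ratios $\rho_{ijk0}$, which are already smooth in $\psi_T$ by the previous step.

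The main obstacle, exactly as in the associahedron case, is to promote these rational expressions to genuine smooth functions across the boundary strata, that is, to keep the denominators $-\rho_{ijk0}$, $1-\rho_{ijk0}$ and those produced by iterating the recursion bounded away from $0$ on $\ol{M}_{n,1,\leq T}$. Here one part is automatic: since ordinary cross-ratios take values in $[-\infty,0]$ one has $1-\rho_{ijk0}\geq 1$, so the second relation carries no singularity, while the possible vanishing of $-\rho_{ijk0}$ occurs only at boundary strata and is resolved by passing to the appropriate boundary coordinate, which is precisely what the hypothesis of the recursion property (that three of $1,\infty,\rho_{ijkl},\rho_{ijkm}$ be distinct) encodes combinatorially. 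The genuinely new difficulty beyond Theorem~\ref{chartdisks} is the bookkeeping at the bivalent colored vertices: the seam degenerates in two distinct ways, collapsing to the tangent point versus expanding to the full boundary, which is why the chart records $\rho_{ab}$ in one regime and $1/\rho_{ab}$ in the other, and verifying that the formulas for the $\rho_{ij}$ glue smoothly across the locus where a quilted component's seam modulus degenerates is where most of the care is required.
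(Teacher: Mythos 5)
Your proposal follows the paper's own proof essentially step for step: reduce the $\rho_{ijkl}$ to Theorem \ref{chartdisks} by checking that the coordinates of $\psi_T$ assemble into a cross-ratio chart for the uncolored tree $T'$, with the missing coordinates at the edges coming from bivalent colored vertices supplied by the relation $\rho_{ijk0}=-\rho_{ij}/\rho_{jk}$, and then express the $\rho_{ij}$ through \textbf{(Relations)}. The one small imprecision is that the edge of $T'$ obtained by contracting a bivalent colored vertex absorbs \emph{two} edges of $T$, so the ordinary cross-ratio governing it is a smooth combination of \emph{both} quilted chart coordinates $1/\rho_{ij}$ and $\rho_{jk}$ attached to those edges (vanishing exactly when at least one of them does), rather than agreeing "up to sign" with a single quilted coordinate as you state; this is the same level of bookkeeping the paper itself elides, and does not change the argument.
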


\begin{proof}
First we prove that all cross-ratios of the form $\rho_{ijkl}$ are
smooth functions of those in the chart associated to ${T}$.  Let $T'$
be the combinatorial type in $\ol{M}_n$ obtained by forgetting colored
vertices.  Taking all cross-ratios of the form $\rho_{ijkl}$ in the
chart associated to ${T}$ is almost a chart for $T'$ in the sense of
Definition \ref{crossratios}, the only chart coordinates that might be
missing correspond to edges whose pre-image in $T$ had a bivalent
colored vertex.  For each bivalent vertex in $T$, we can assume that
the lower edge has coordinate $\rho_{i, j} = \infty$ and the upper
edge is either $\rho_{jk} = 0$ or $\rho_{hi} = 0$.  Assuming the first
case, relation (\ref{rel1}) holds and $\rho_{i j k0} = -\rho_{ij} /
\rho_{jk}$, which expresses $\rho_{ij k 0}$ as a smooth function of
the chart coordinates, and $\rho_{ijk0}$ is a valid chart coordinate
for the edge in $T'$. The other case is very similar, by relation
(\ref{rel1}), $\rho_{h i j 0} = - \rho_{hi} / \rho_{ij}$, which
expresses $\rho_{hij0}$ as a smooth function of the chart coordinates,
and $\rho_{hij0}$ is a chart coordinate for the corresponding edge in
$T'$. Thus we get a chart for $T'$, so by Theorem \ref{chartdisks} all
cross-ratios of the form $\rho_{abcd}$ are smooth functions of these
coordinates.  Finally, all cross-ratios $\rho_{ab}$ are smooth
functions of the cross-ratios $\rho_{ij}$ in the chart and the
appropriate $\rho_{ijk0}$'s, by {\bf (Relations)}.
\end{proof}

\section{Local structure}  \label{simple_charts_2}

In general, $\ol{M}_{n,1}$ is not CW-isomorphic to a manifold with
corners, but rather has more complicated singularities that we now
describe.  Quilted disks in the interior $M_{n,1}$ can be identified with configurations of
$n$ distinct points $-\infty < z_1 < z_2 < \ldots < z_n < \infty$ in
$\R \subset \C$, together with a horizontal line $L$ in $\H$.
Isomorphisms are transformations of the form $z \mapsto az + b$ for
$a, b \in \R$ such that $a > 0$, i.e. dilation and translation. For
such configurations define coordinates
$(x_1, x_2, \ldots, x_n, y)$ by $x_i = z_{i+1} - z_i$, and $y =
\dist(L, \R)$. A transformation $z \mapsto az + b$ for $a, b \in \R$
sends
$(x_1 , x_2 , \ldots , x_{n-1} , y) \mapsto (a x_1 , a x_2 , \ldots ,
a x_{n-1} , a y),$
so $(x_1 : x_2 : \ldots : x_{n-1} : y)$ are projective coordinates on
${M}_{n,1}$.

Let $T$ be a maximal colored rooted ribbon tree, hence its colored vertices are
bivalent, and all other vertices trivalent.  We construct a
{\em simple ratio chart}
\begin{equation}
 \label{simpleratios}
\phi_T: \ol{M}_{n,1,\leq T} \to \Hom(\Edge(T),\R_{\ge 0 }), \quad [S]
\mapsto (\phi_{T,e}(S) )_{e\in \Edge(T)} \end{equation} 
as follows.  Let $[S] \in \ol{M}_{n,1,\leq T}$.  For each $1\leq i \leq
n-1$, there is a unique vertex of $T$ at which the path from the leaf
$i$ and the leaf $i+1$ back to the root intersect; we label this
vertex $v_i$.  Every trivalent vertex of $T$ can be labeled this
way, so all remaining vertices are colored. The interior
edges of $T$ are of two types: edges that connect two vertices $v_i$
and $v_j$, and edges that connect a vertex $v_i$ to a colored vertex.
Suppose that $e \in \Edge(T)$ connects the vertex $v_i$ with the
vertex $v_j$, with $v_j$ closer to the root (i.e., $v_j$ is above
$v_i$). The vertex $v_i$ labels the unique component of the nodal disk
$S$ on which the markings corresponding to the leaves $z_j$, $z_{j+1}$
and $z_0$ are distinct.  On this component, choose a parametrization
that sends $z_0$ to $\infty$, then label the edge between $v_i$ and
$v_j$ with $\phi_{T,e} = (z_{i+1}-z_i)/(z_{j+1} - z_{j}) = x_i/x_j$.
Note that the label $\phi_{T,e}$ is therefore independent of the choice
of parametrization. If the edge $e$ connects the vertex $v_i$ with a
colored vertex immediately above it, choose the unique component at
which $z_{i+1}$ is distinct from $z_i$, and label $e$ with $\phi_{T,e} =
x_i/y$; if the edge $e$ connects the vertex $v_i$ with a colored
vertex immediately below it, label $e$ with the value $\phi_{T,e} =
y/x_i$.
\begin{figure}[ht]
\includegraphics[height=4in]{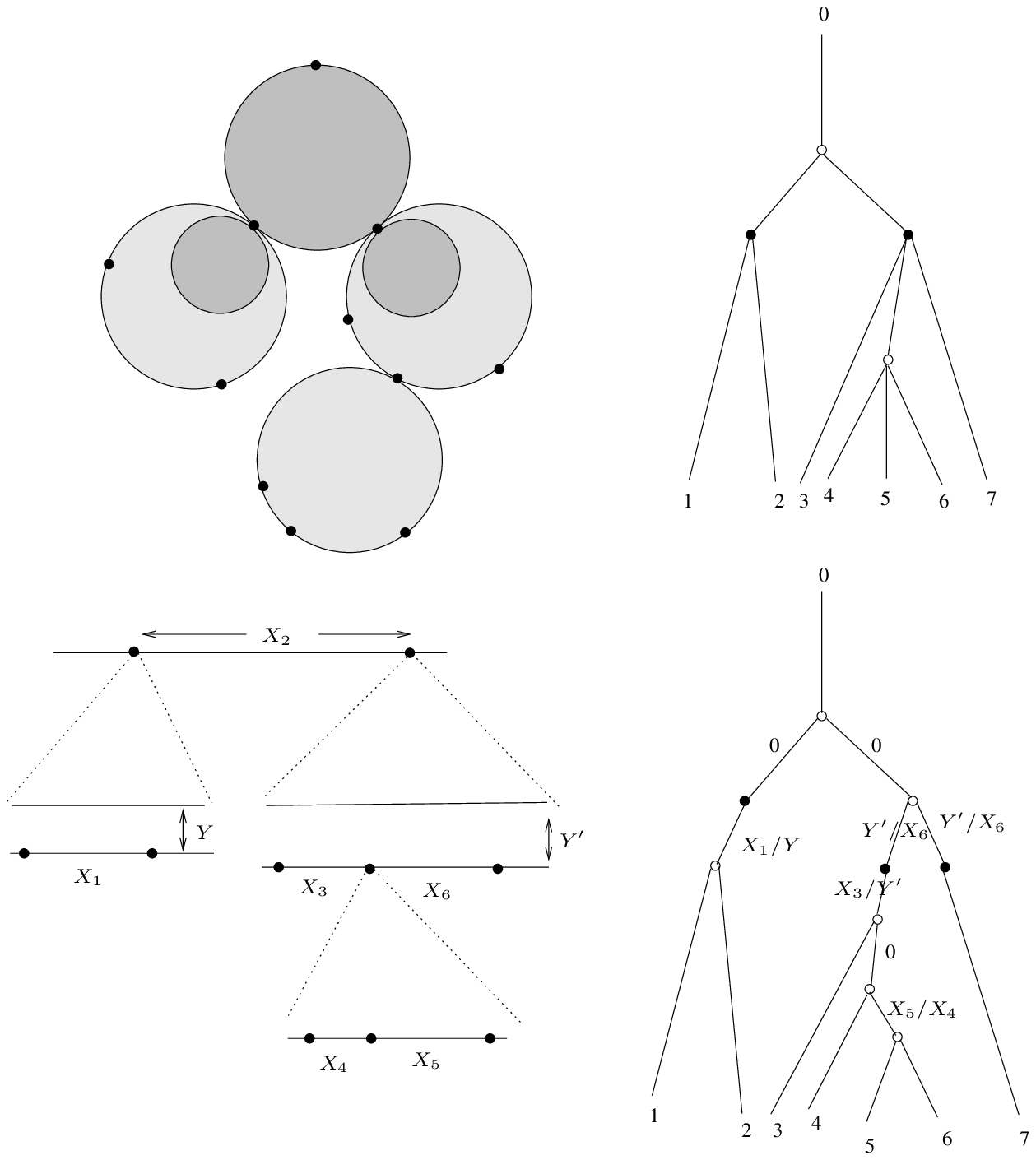}
\caption{Identifying a nodal quilted disk in $\ol{M}_{7,1,\leq T}$ with a balanced labelling of $T$, using simple ratios.}
\label{phi_T}
\end{figure}
We claim that the map $\phi_T$ is defined for all $[S] \in
\ol{M}_{n,1,\leq T}$ with all the ratios $\phi_{T,e}(S)$ landing in
$[0,\infty)$, and with the property that $\phi_{T,e}(S) = 0$ if and only
  if the combinatorial type of $S$ has the edge $e$. To see this,
  recall that if $[S] \in \ol{M}_{n,1,\leq T}$, its combinatorial type
  $T_S$ must be obtained from $T$ by contracting a subset of edges
  (that is, $T_S \leq T$).  In particular, every edge in $T_S$
  corresponds to a unique edge in $T$. Hence, if $e \in \Edge(T)$ connects
  a vertex $v_i$ with a vertex $v_j$ above it, then in $T_S$ either
  $v_i = v_j$ if $e$ is contracted, or the edge $e$ remains.  If $v_i
  = v_j$ in $T_S$, it implies that the disk component of $S$ on which
  $z_j \neq z_{j+1}$ is also the disk component on which $z_i \neq
  z_{i+1}$, hence $x_i > 0$ and $x_j > 0$ and the ratio $\phi_{T,e}(S) =
  x_i/x_j > 0$. If the edge $e$ remains in $T_S$, it means that with
  respect to the markings on the disk component where $z_j \neq
  z_{j+1}$, we have $z_i =z_{i+1}$ and so $\phi_{T,e}(S) = 0/x_j = 0.$

Now if $e \in \Edge(T)$ connects vertex $v_i$ with a colored vertex above
it, then in $T_S$ either $v_i$ becomes a colored vertex, or $e$ is an
edge. In the first case, it implies that the unique component where
$z_i \neq z_{i+1}$ is a quilted component, so parametrizing the
component such that $z_0 = \infty$ the inner circle is a line of
height $y >0$, and $\phi_{T,e}(S) = x_i/y >0$. In the second case, the
unique component where $z_i \neq z_{i+1}$ is unquilted and corresponds
to having the line at height $y = \infty$, so $\phi_{T,e}(S) = x_i/\infty
= 0$. The case of a colored vertex below $v_i$ is similar.
This completes the construction of $\phi_T$. 

\begin{figure}[h]
\includegraphics[height=1.5in]{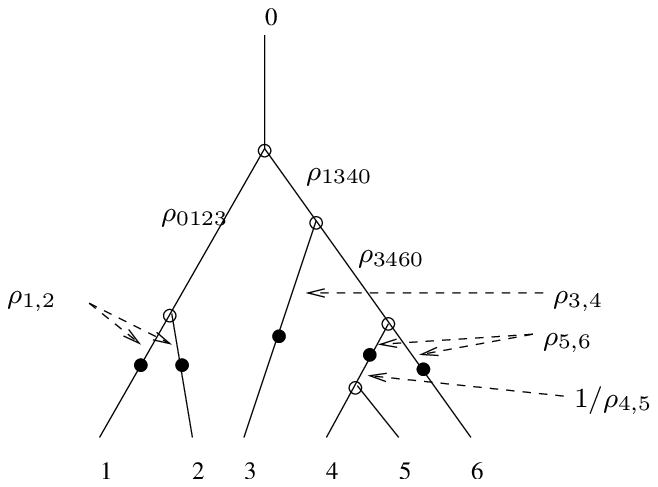} \ \ \includegraphics[height=1.5in]{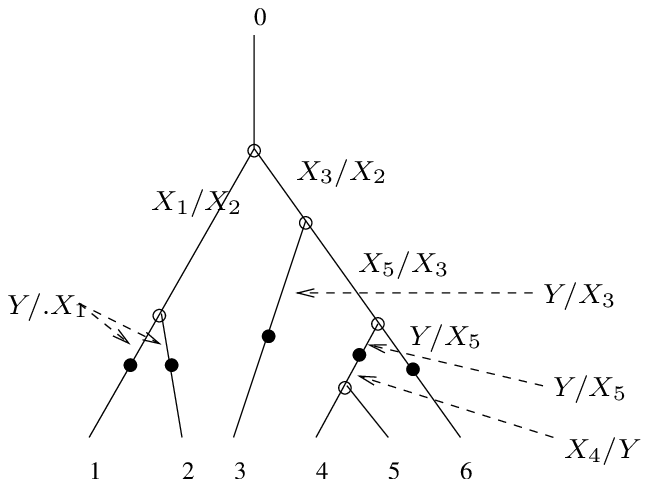}
\caption{A cross-ratio chart and a simple-ratio chart for the same maximal colored tree.}\label{example_charts}
\end{figure}

\begin{definition} 
Let $T$ be a colored tree.  A labelling $ \varphi: \Edge(T) \to
\R_{\ge 0}$ is {\em balanced} if it satisfies the following condition:
denote by $V_-(T)$ the set of vertices on the root side of the colored
vertices, that is, connected to the root by a path not crossing a
colored vertex. For each vertex $v_0 \in V(T)$ and any colored vertex
$v$ connected by a path of edges not crossing the root, let 
$\pi(v_0,v)$
denote the product of the values of $\varphi$ along the unique path of
edges from $v_0$ to $v$.  Then $\varphi$ is balanced if $\pi(v_0,v)$ is
independent of the choice of colored vertex $v$.  Let $X(T)$ denote
the set of balanced labellings:
$$
 X(T) := \{ \varphi:\Edge(T) \to \R_{\ge 0} \, | \, \forall v_0 \in
 V_-(T), \, 
\pi(v_0,v) \ \text{is independent of} \ v \in
 V_{\col}(T) \} .$$
%
\end{definition} 
We denote by $G(T) \subset X(T)$ the subset of non-zero labellings.

\begin{proposition} \label{maxhomeo}
Let $T$ be a maximal colored tree.  Then $\phi_T$ is a homeomorphism
from $\ol{M}_{n,1, \leq T}$ onto $X(T)$, mapping $M_{n,1}$ onto
$G(T)$.
\end{proposition}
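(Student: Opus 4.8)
The plan is to take $\phi_T$ as already constructed (landing in $\Hom(\Edge(T),\R_{\ge 0})$ with $\phi_{T,e}(S)=0$ exactly when the edge $e$ survives in $T_S$) and to establish four things in turn: that $\phi_T$ is continuous, that its image lies in $X(T)$, that $\phi_T$ is a bijection $\ol{M}_{n,1,\le T}\to X(T)$ restricting to $M_{n,1}\to G(T)$, and that the inverse is continuous. For continuity of $\phi_T$ I would use that the topology on $\ol{M}_{n,1}$ is pulled back from the cross-ratios, and express each simple ratio $\phi_{T,e}$ as a composition of continuous functions of the $\rho_{ij}$ and $\rho_{ijkl}$ via the \textbf{(Relations)} established for the $\rho_{ij}$ (e.g. $\rho_{jk}=\rho_{ij}/(-\rho_{ijk0})$). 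Near a stratum where $e$ is contracted these expressions extend continuously with values in $(0,\infty)$, and where $e$ survives they extend by $0$, matching the definition of $\phi_{T,e}$.

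For the image the cleanest route is to verify balancing on the open stratum and then propagate it to the closure. On $M_{n,1}$ the disk is smooth, so there is a single gap $x_i=z_{i+1}-z_i$ attached to each trivalent vertex $v_i$ and a single height $y=\dist(L,\R)$ attached to every colored vertex; each edge label is then the ratio of the value at its endpoint farther from the root to the value at its endpoint nearer the root. Hence along the unique path from $v_0\in V_-(T)$ to a colored vertex $v$ the product telescopes to the ratio of $y$ to the value at $v_0$, and since $y$ is the same for every colored vertex (there is one seam) this is independent of $v$, so $\phi_T(M_{n,1})\subseteq G(T)\subseteq X(T)$. Because $M_{n,1}$ is dense in $\ol{M}_{n,1,\le T}$ (its type is obtained by contracting all finite edges of $T$), because $\phi_T$ is continuous, and because $X(T)$ is cut out of $\R_{\ge 0}^{\Edge(T)}$ by the closed conditions $\pi(v_0,v)=\pi(v_0,v')$, I obtain $\phi_T(\ol{M}_{n,1,\le T})\subseteq X(T)$ with no boundary case analysis.

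To prove that $\phi_T$ is a bijection onto $X(T)$ I would construct an explicit inverse. Given a balanced $\varphi$, declare the edges with $\varphi(e)=0$ to be nodes and let the components be the connected subgraphs spanned by the edges with $\varphi(e)>0$; the balancing condition, together with the colored-tree requirement that exactly one colored vertex lie on each leaf-to-root path, ensures that this assignment is a legitimate combinatorial type $T_S\le T$, in particular that no node joins two quilted components. On each component one reconstructs the marked configuration up to the automorphisms $z\mapsto az+b$ by fixing $z_0=\infty$ and reading the gaps $x_i$ off the positive ratios along contracted edges; if the component is quilted, the several colored vertices of $T$ merged into its single seam all prescribe the same height $y$ precisely because $\varphi$ is balanced, so $y$ is well defined. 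Gluing the components along the nodes yields a stable quilted disk $S$ with $\phi_T(S)=\varphi$, giving surjectivity, and the same reconstruction shows that $[S]$ is determined by $T_S$ together with the isomorphism classes of its components, hence by $\varphi$, giving injectivity (this also follows from injectivity of the full cross-ratio map $\rho_{n,1}$ once one checks that $\phi_T$ determines every $\rho_{ij}$ and $\rho_{ijkl}$). Restricting to all-positive labellings recovers $M_{n,1}\cong G(T)$, and the inverse is continuous because the cross-ratios of the reconstructed $S$ are recovered as continuous functions of $\varphi$ through the same \textbf{(Relations)}.

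The main obstacle is the reconstruction step for a quilted component into which several colored vertices of $T$ have collapsed: one must check that the balancing condition is exactly strong enough to make the seam height $y$ consistent—neither over- nor under-determined—and that the resulting family of configurations varies continuously as some $\varphi(e)\to 0$, i.e. that the inverse matches the bubbling degeneration described in Section~\ref{multiplihedron} and the surrounding discussion. Everything else (the telescoping, density of $M_{n,1}$, and closedness of $X(T)$) is routine once this compatibility between balancing and the single-seam constraint is pinned down.
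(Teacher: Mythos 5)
Your proof is correct and follows essentially the same route as the paper's: verify balancing by telescoping the edge labels to $y/x_{i}$, invert $\phi_T$ by reconstructing a nodal quilted disk from a balanced labelling, and compare the two topologies by relating the simple ratios to the cross-ratios. The paper makes that last step explicit by computing $\rho_e = \phi_{T,e}\, f(\zeta)$ with $f$ smooth, nonvanishing and continuous up to the boundary --- which is exactly the verification your appeal to \textbf{(Relations)} would have to carry out --- while your density argument for showing the image lies in $X(T)$ is a tidy substitute for the paper's direct (and equally terse) assertion.
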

\noindent Thus in particular the simple-ratios and cross-ratios define
the same topology on $\ol{M}_{n,1,\leq T}$.

\begin{proof}  It follows from the definition that 
$\phi_T$ takes values in balanced labellings, with products $y/x_i$
  where $i$ is the top vertex.  The construction of $\phi_T$ also makes it clear how to
  construct a pointed nodal quilted disk from a
  balanced labeling of $T$, showing that $\phi_T$ is onto
  $X(T)$. To make the relationship between the coordinates in the
  balanced labeling and the cross-ratio coordinates in a chart for
  $\ol{M}_{n,1,\leq T}$ explicit, let $\rho = (\rho_e)_{e\in
    \Edge(T)}$ be the cross-ratios in a chart covering
  $\ol{M}_{n,1,\leq T}$.  Without loss of generality, assume that all
  chart cross-ratios of the form $\rho_{ijkl}$ are either of the form
  $\rho_e = \rho_{ijk0}$, or $\rho_{e} = \rho_{0ijk}$, so that on
  $\ol{M}_{n,1, \leq T}$ they take values in $(-\infty, 0]$, and such
that for $[S] \in \ol{M}_{n,1,\leq T}$, $\rho_e(S) = 0$ implies that the
combinatorial type of $S$ has the edge $e$.  Let $\zeta =
(\phi_{T,e})_{e\in \Edge(T)}$ denote the simple ratios in a balanced
labelling of $T$.  We claim that $\rho_{e} = \zeta_{e}f(\zeta)$ for
every edge $e \in \Edge(T)$, where $f(\zeta) $ is a smooth function on
the interior of $\ol{M}_{n,1,\leq T}$ which is continuous up to the
boundary, and $f(\zeta) \neq 0$ on $\ol{M}_{n,1,\leq T}$.  In
particular, $\rho_{e} = 0 \iff \zeta_{e} = 0$.  First we prove it for
the cross-ratios $\rho_{ijk0}$ in the chart.  By symmetry it suffices
to consider the edge pictured in Figure \ref{equichart}, where an edge
$e$ joins vertices $v_r$ and $v_s$, with $v_r$ above $v_s$, so
$\phi_{T,e} = x_s/x_r$, and a chart cross-ratio for $e$ is $\rho_{ijk0}$.
\begin{figure}[h]
\includegraphics[height=1.5in]{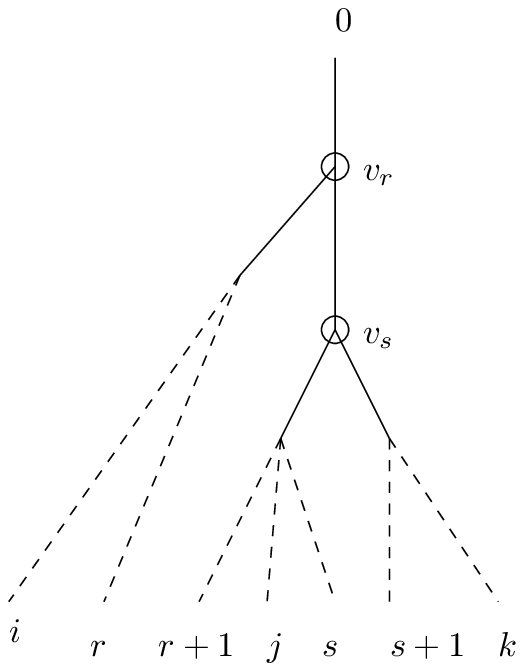}\quad \quad \includegraphics[height=1.5in]{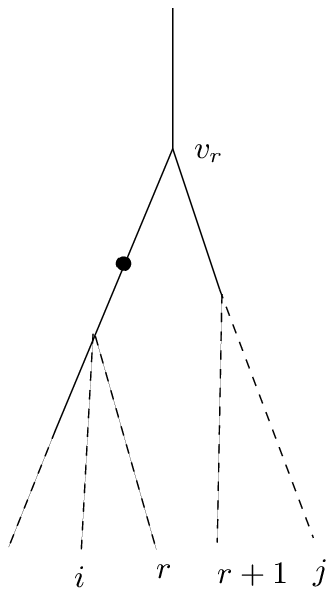}
\caption{Comparing cross-ratios with simple ratios.}\label{equichart}\label{equi_chart2}
\end{figure}
Parametrizing so that $z_0=\infty$, \bea \rho_{ijk0} & = & - \frac{z_j
  - z_k}{z_j - z_i} = - \frac{x_s}{x_{r}}\left( \frac{x_j/x_s +
  x_{j+1}/x_s + \ldots + 1 + \ldots + x_{k-1}/x_s}{x_i/x_r + \ldots +
  1 + \ldots + x_{j-1}/x_r } \right) = \phi_{T,e} f(\zeta).  \eea The
ratios in the bracketed function are products of ratios labeling edges
below $v_r$ and $v_s$, and the presence of the 1's means that
bracketed function is smooth and never zero for all positive non-zero
ratios and continuous as the labels in the chart go to 0, so
$\rho_{ijk0} = 0$ if and only if $\phi_{T,e} = x_s/x_r = 0$.  Now we
prove it for a cross-ratio $\rho_{ij}$ in the chart. Parametrizing so
that $z_0 = \infty$ and using $y$ to denote the height of the line
with respect to this parametrization, consider an edge such as the one
pictured in Figure \ref{equi_chart2}, where the cross-ratio labelling $e$ in
a cross-ratio chart is $\rho_{ij}$, and the simple ratio $\phi_{T,e} =
y/x_r$. Then
$$ \rho_{ij}  =  \frac{y}{z_j - z_i} = 
\frac{y}{x_r}\left(\frac{1}{\frac{x_i}{x_r} + \ldots + 1 + \ldots +
  \frac{x_{j-1}}{x_r}}\right) = \phi_{T,e} f(\zeta)$$ 
where the ratios appearing in the big bracket are products of simple
ratios labelling edges below $v_r$.  The function $f(z)$ is smooth and
never $0$ for all positive non-zero ratios and it is continuous as
ratios $\phi_{T,e} \to 0$.  Moreover $\rho_{ij} = 0$ if and only if
$y/x_r =0$. The case of a colored vertex above a regular vertex is
very similar so we omit it.  This proves that the transition from a
simple ratio chart to a cross ratios chart is a smooth change of
coordinates on $\ol{M}_{n,1,\leq T}$.
\end{proof}

One sees from this description that $\ol{M}_{n,1}$ is {\em not} a
manifold-with-corners.  We say that a point $[S] \in \ol{M}_{n,1}$ is
a {\em singularity} if $\ol{M}_{n,1}$ is not CW-isomorphic to a
manifold with corners near $[S]$.

\begin{example} 
The first singular point occurs for $n = 4$.  The expression
$(f(x_1)f( x_2)) (f(x_3) f(x_4))$ is adjacent to the expressions
{\small
$ f(x_1x_2)(f(x_3)f(x_4))$, $(f(x_1)f(x_2)) f(x_3x_4) $,
$f(x_1)(f(x_2)(f(x_3)f(x_4)))$, $ ((f(x_1)f(x_2))f(x_3))f(x_4) $}
and hence there are four edges coming out of the corresponding vertex.
On the other hand, the dimension of $M_{4,1}$ is $3$, see Figure \ref{singularity}.  Thus $M_{4,1}$
cannot be a manifold with corners (and therefore, not a simplicial
polytope.)  
\begin{figure}[h]
\includegraphics[height=2in]{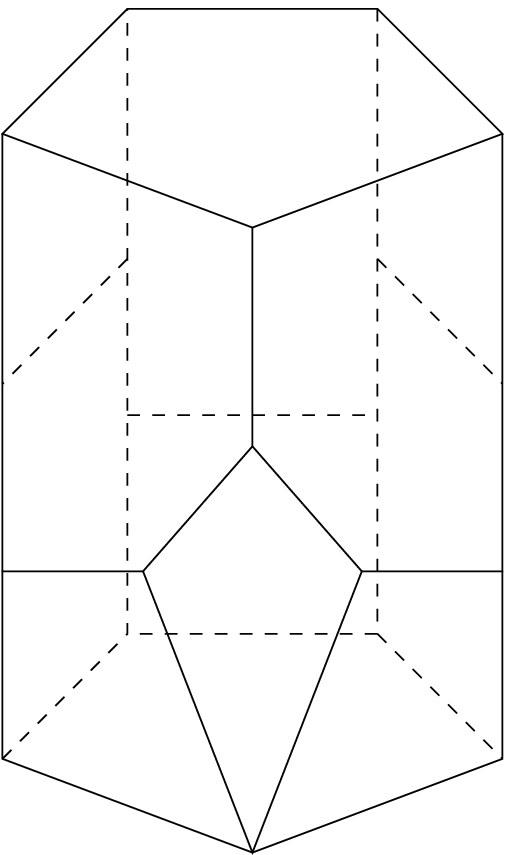}
\includegraphics[height=1.5in]{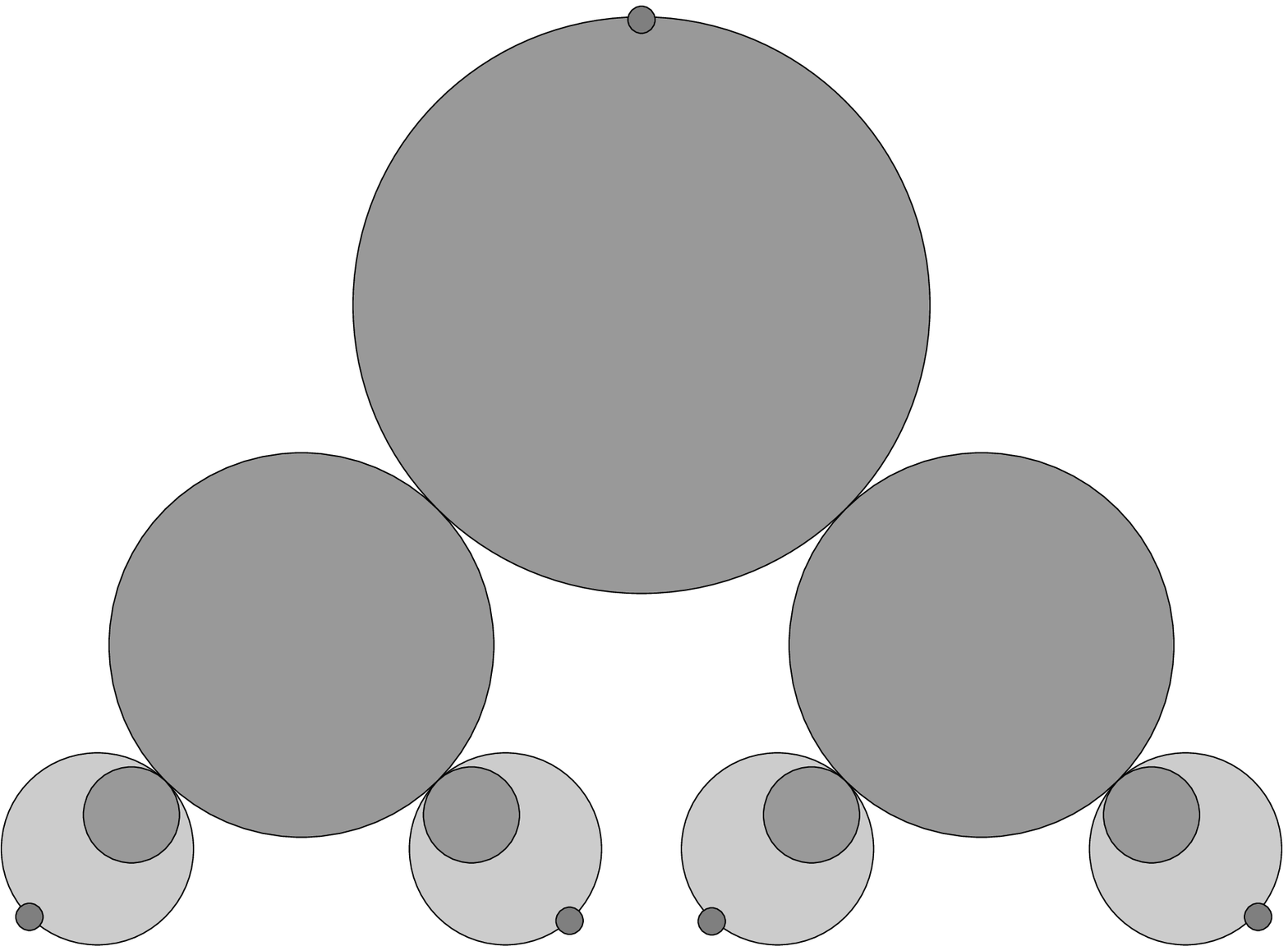}
\caption{$\ol{M}_{4,1}$, sometimes called the ``Chinese lantern''. The singular point on the boundary, which has 4 edges coming out of it, corresponds to the nodal quilted disk at right.}.\label{singularity}
\end{figure}
\end{example}

\begin{lemma} \label{morphism}  Any morphism of trees $f: T_0 \to T_1$ induces
a morphism of balanced labellings $X(f): X(T_0) \to X(T_1)$.
\end{lemma}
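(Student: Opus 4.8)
The plan is to define $X(f)$ by a single multiplicative \emph{product formula} on edge labels, and then to read off balancedness of the image from balancedness of the source by telescoping. I would write $f\colon T_0\to T_1$ as the contraction of a set $C\subset\Edge(T_0)$ of finite edges, so that $\Edge(T_1)$ is identified with $\Edge(T_0)\ssm C$ and each vertex $p\in V(T_1)$ is the image of a connected cluster $f^{-1}(p)\subset V(T_0)$ of vertices joined by edges of $C$. For $p\in V(T_1)$ let $r(p)\in f^{-1}(p)$ denote the vertex of the cluster closest to the root, and for an edge $e'=(a,b)\in\Edge(T_1)$ with $b$ on the far (leaf) side set
$$ X(f)(\varphi)(e') \;=\; \prod_{e\in\,\mathrm{path}_{T_0}(r(a),\,r(b))} \varphi(e), $$
the product of $\varphi$ over the unique $T_0$-path joining the two cluster representatives. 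Since $\varphi$ takes values in $\R_{\ge 0}$ so does this product; thus $X(f)(\varphi)\in\Hom(\Edge(T_1),\R_{\ge 0})$ is defined on \emph{all} of $X(T_0)$ with no indeterminacy, even on the strata where labels vanish. Functoriality $X(g\circ f)=X(g)\circ X(f)$ will then be immediate, because nested products of labels along nested cluster-paths collapse to a single product along the $T_0$-path cut out by the total contraction $g\circ f$.

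The real content is that $X(f)(\varphi)$ is again \emph{balanced}, and the key combinatorial fact I would isolate first is that a morphism of \emph{colored} trees can never contract the edge immediately below a colored vertex. Indeed, if $e=(u,v)$ with $v\in V_{\col}(T_0)$ on the far side of a non-colored vertex $u$, then by stability $u$ has valence $\ge 3$ and carries a second upward edge into a subtree whose leaf-paths already each meet a colored vertex; contracting $e$ would create a vertex through which some root-to-leaf path meets either two colored vertices or none, violating the defining condition of a colored tree. Consequently every edge of $C$ incident to a colored vertex is an upward edge, each cluster contains at most one colored vertex, and when it does that colored vertex is exactly its root-most representative $r(p)=\tilde c$.

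With this in hand, balancedness transfers by telescoping. I would fix $a_0\in V_-(T_1)$ and $c\in V_{\col}(T_1)$, put $\psi=X(f)(\varphi)$, and expand $\psi$ edge by edge along the $T_1$-path from $a_0$ to $c$: the representatives of consecutive vertices lie in root-to-leaf order along one $T_0$-path, so the per-edge factors concatenate and $\pi_{T_1}^{\psi}(a_0,c)=\pi_{T_0}^{\varphi}(r(a_0),r(c))$. Here $r(c)=\tilde c$ is a colored vertex of $T_0$ and $r(a_0)\in V_-(T_0)$, so balancedness of $\varphi$ says $\pi_{T_0}^{\varphi}(r(a_0),\tilde c)$ is independent of $\tilde c\in V_{\col}(T_0)$, whence $\pi_{T_1}^{\psi}(a_0,c)$ is independent of $c$ and $\psi\in X(T_1)$. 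The hard part will be the bookkeeping in this last step — verifying that the representatives $r(\,\cdot\,)$ encountered along a $T_1$-path really do occur in monotone order on a single $T_0$-path, so that the factors concatenate with neither gaps nor overlaps, and checking that the structural claim that colored vertices are cluster roots survives for composite contractions. Everything else, including the behaviour on the degenerate boundary strata, is then forced by the purely multiplicative shape of the formula.
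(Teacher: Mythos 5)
Your construction is the same as the paper's: the paper defines the induced labelling by $\varphi_1(e_1)=\prod\varphi_0(e_0)$, the product over $e_1$'s preimage together with the collapsed edges between it and the next uncollapsed edge on the root side, which is exactly your product over the $T_0$-path between cluster representatives, and it then asserts balancedness with only ``one sees easily.'' Your two supporting observations --- that a contracted edge can never separate a colored vertex from its root-side neighbour (so each colored vertex of $T_1$ lifts to a colored vertex of $T_0$ sitting at the top of its cluster), and that the per-edge products telescope without gaps or overlaps along root-to-leaf paths --- are precisely the content behind that ``easily,'' and they are correct.
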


\begin{proof} 
Let $f: T_0 \to T_1$ be a morphism of trees.  Given a balanced
labelling $\varphi^m : \Edge(T_0) \to \R_{\ge 0 }$ we obtain a
balanced labelling $\varphi_1 : \Edge(T_1) \to \R_{\ge 0}$ by setting
$\varphi_1(e_1) = \prod \varphi_0(e_0)$ where the product is over
edges $e_0$ above $e_1$ that are collapsed under $f$.  One sees easily
that the resulting labelling of $T_1$ is balanced.
\end{proof}  

\begin{corollary}  \label{nonmaxhomeo} Let $T$ be a colored tree.  There 
exists a CW-isomorphism of $M_{n,1,T} \times X(T)$ onto a neighborhood
of $M_{n,1,T}$ in $\ol{M}_{n,1}$.
\end{corollary}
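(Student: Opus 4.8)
The plan is to construct an explicit \emph{smoothing map}
$$\Gamma\colon M_{n,1,T}\times X(T)\longrightarrow \ol{M}_{n,1},\qquad ([S],\varphi)\longmapsto [S_\varphi],$$
where $S_\varphi$ is the nodal quilted disk obtained from $S$ by smoothing exactly those nodes $e$ with $\varphi(e)\neq 0$, using the value $\varphi(e)$ as the simple-ratio gluing parameter across $e$, and keeping the nodes with $\varphi(e)=0$. The key point is that the \textbf{balanced} condition defining $X(T)$ is precisely the compatibility needed for the seam circles on the smoothed components to acquire consistent radii, which is what makes $S_\varphi$ a well-defined stable quilted disk. I would then need to show three things: that $\Gamma$ is continuous, that it is a homeomorphism onto an open neighborhood of $M_{n,1,T}$, and that it carries the product cell structure to the stratification of $\ol{M}_{n,1}$, in the sense that $M_{n,1,T}\times\{\varphi\}$ lands in $M_{n,1,T_\varphi}$, where $T_\varphi$ is obtained from $T$ by contracting the edges on which $\varphi$ is nonzero. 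The relation ``$\phi_{T,e}(S)=0$ if and only if the type of $S$ has the edge $e$'', established in the construction preceding Proposition \ref{maxhomeo}, matches the convention $\varphi(e)=0\iff e$ survives, so the stratum bookkeeping is built into the definition.

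To prove $\Gamma$ is a homeomorphism I would reduce to the maximal case handled by Proposition \ref{maxhomeo}. Every point of $M_{n,1,T}$ lies in some chart $\ol{M}_{n,1,\leq \widehat{T}}$ with $\widehat{T}$ a maximal colored tree refining $T$, since each component of a type-$T$ disk may be degenerated into trivalent and bivalent colored pieces; as $\widehat{T}$ ranges over such refinements these charts cover $M_{n,1,T}$, and on each of them $\phi_{\widehat T}$ is a homeomorphism onto $X(\widehat T)$. Fix one contraction $f\colon \widehat T\to T$ and split $\Edge(\widehat T)$ into the edges \emph{collapsed} by $f$, which are interior to the vertices of $T$, and the edges surviving to $\Edge(T)$. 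On $X(\widehat T)$ the collapsed-edge coordinates are exactly the simple ratios of the individual components of the type-$T$ disk, so by Proposition \ref{maxhomeo} applied componentwise they give a homeomorphism of the locus $\{\varphi|_{\Edge(T)}=0\}$ onto a neighborhood in $M_{n,1,T}$, while the surviving-edge coordinates are the $X(T)$-directions. This exhibits $X(\widehat T)$ locally as a product of a neighborhood in $M_{n,1,T}$ with $X(T)$, and through $\phi_{\widehat T}^{-1}$ identifies $\Gamma$ (restricted to that chart) with a homeomorphism onto $\ol{M}_{n,1,\leq \widehat T}$.

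Finally I would glue the local product structures. On an overlap $\ol{M}_{n,1,\leq\widehat T}\cap\ol{M}_{n,1,\leq\widehat T'}$ the two splittings differ only in the internal refinements $\widehat T,\widehat T'$ of the common coarse type $T$, and Lemma \ref{morphism}, which makes $X(\cdot)$ functorial for tree morphisms, shows that the induced maps $X(\widehat T)\to X(T)$ and $X(\widehat T')\to X(T)$ both agree with the surviving-edge projection; hence the two local trivializations induce the same $X(T)$-factor and the same projection to $M_{n,1,T}$, and the charts assemble into a single homeomorphism $\Gamma$ onto a neighborhood of $M_{n,1,T}$. \textbf{The main obstacle} I anticipate is exactly the factorization used in the previous paragraph: because $\ol{M}_{n,1}$ is not a manifold with corners but carries the toric singularities recorded by the relations among the $\rho_{ij}$, one must verify that the multiplicative balance condition defining $X(\widehat T)$ genuinely \emph{decouples} into the balance conditions internal to the components of $T$ (producing the $M_{n,1,T}$-factor) and the balance condition defining $X(T)$ (producing the normal factor). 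Checking this decoupling directly on balanced labellings $\varphi$, rather than in smooth corner charts, and confirming that it is compatible with the functoriality of Lemma \ref{morphism}, is the technical heart of the argument; the remaining continuity and compactness bookkeeping is inherited from Proposition \ref{maxhomeo}.
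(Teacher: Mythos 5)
Your proposal is correct and follows essentially the same route as the paper: fix a maximal colored refinement $T^m \to T$, invoke Proposition \ref{maxhomeo} for $\phi_{T^m}$, and split the balanced labellings of $T^m$ into the labellings internal to the subtrees over each vertex of $T$ (giving the $M_{n,1,T}$ factor, again via Proposition \ref{maxhomeo} componentwise) times the pushed-forward labelling in $X(T)$ from Lemma \ref{morphism}; the ``decoupling'' you flag as the main obstacle is exactly the step the paper records as ``straightforward to verify.'' The only inessential difference is your final chart-gluing step: since the single chart $\ol{M}_{n,1,\leq T^m}$ already contains an entire neighborhood of $M_{n,1,T}$, one maximal refinement suffices and no patching over several $\widehat T$ is needed.
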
  

\begin{proof}  Let $T^m$ be a maximal tree such that there exists
a morphism of trees $f: T^m \to T$.  For each vertex $v \in V(T)$, let
$T^m_v \subset T$ be the subtree whose vertices map to $v$.  Given a
labelling $\varphi_m \in X(T^m)$, we obtain by restriction labellings
$\varphi_v \in X(T^m_v)$ for each $v \in V(T)$, and from Lemma
\ref{morphism} a labelling $\varphi \in X(T)$.  Thus we obtain a map 
$$ X(T^m) \to  \left( \prod_{v \in V(T)} X(T^m_v) \right) \times X(T) .$$
It is straightforward to verify that this map induces an isomorphism
of $ \{ \varphi \in X(T^m) | \varphi(e) \neq 0 \forall e \in \Edge(T)
\}$ onto $(\prod_{v \in V(T)} X(T^m_v)^*) \times X(T) .$ The former is
the image of $\ol{M}_{n,1,\leq T}$ under $\phi_T^{-1}$.  Since each tree
$T^m_v$ is maximal, Proposition \ref{maxhomeo} gives an isomorphism of
$\ol{M}_{n,1,\leq T}$ onto $M_{n,1,T} \times X(T).$
\end{proof}  

For later use, we describe subsets of the edges whose labels determine
all others.

\begin{definition}
Let $T$ be a maximal colored tree. Let $e$ be an interior edge of
$T$ that is incident to a pair of trivalent vertices. Contraction of
$e$ produces a 4-valent vertex, and we say that the tree obtained by a
{\em flop}\label{flop_nef} of $e$ is that which corresponds to the
alternative resolution of the 4-valent vertex.  A {\em fusion} move
through an interior vertex $v_i$ is one by which two colored vertices
immediately below $v_i$ become a single colored vertex immediately
above $v_i$; we call the reverse move a {\em splitting} move.  We say
that two maximal colored trees $T$ and $T^\prime$ differ by a {\em
  basic move} if they differ by a single flop, fusion, or splitting
move.
\end{definition}

\begin{figure}[h]
\includegraphics[height=1in]{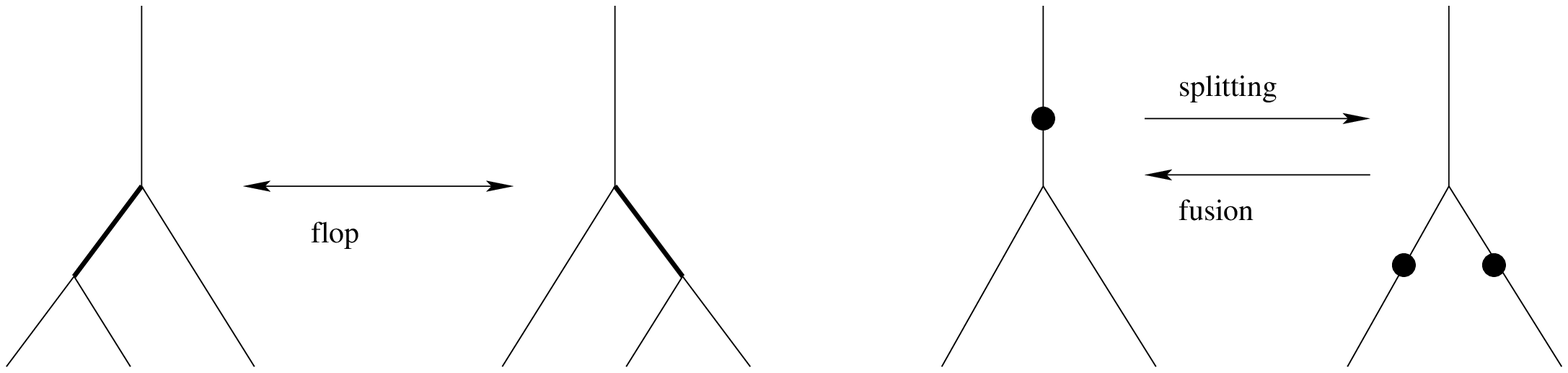}
\caption{Basic moves on edges in a colored
  tree.}\label{flop2_type}
\end{figure}

\noindent Any maximal colored tree can be obtained from a fixed
maximal colored tree by a sequence of basic moves.
Let $T$ be a maximal colored tree.  The simple ratios chart covering
the open set $\ol{M}_{n,1,\leq T}$ assigns a simple ratio coordinate to each
interior edge of $T$; however, on a given chart, some of those ratios
may be functions of other ratios in the chart. There are six
possibilities for an interior edge $e$ of $T$, pictured in Figure
\ref{assoc_moves}.  Note that the edges in the top four cases have a
basic move associated to them, but not the two lower cases.
\begin{figure}
\begin{center}
\includegraphics[height=1.5in]{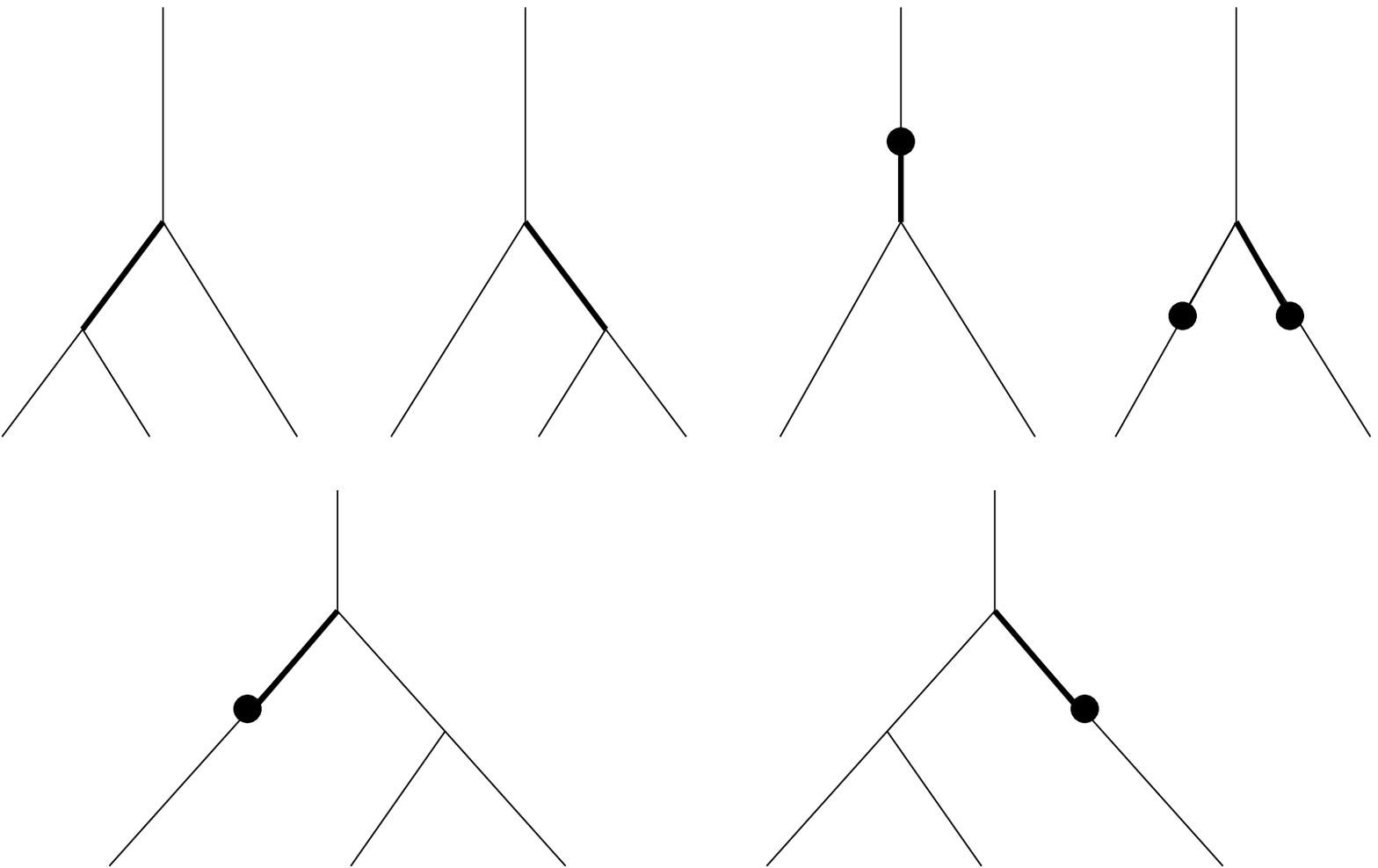}
\end{center}
\caption{The top 4 types of edge have associated basic moves; the lower two types do not.}\label{assoc_moves}
\end{figure}

\begin{lemma}
All simple ratios in $\phi_T$ are determined by the simple ratios
labeling edges which have an associated basic move.
\end{lemma}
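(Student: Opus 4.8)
The plan is to read the determination directly off the balanced condition defining $X(T)$, after pinning down which edges carry no basic move. Since $T$ is maximal, its colored vertices are bivalent and form a cut separating the leaves from the root; let $T_-$ be the (possibly empty) subtree of trivalent vertices lying on the root side of this cut, so that the vertices of $T_-$ are exactly $V_-(T)$. A preliminary remark is that no vertex of $T_-$ has a leaf as a neighbor on the leaf side, since such a leaf edge would force a colored vertex between that vertex and the root, contradicting membership in $V_-(T)$. Consequently every interior edge is of one of three kinds: a flop edge between two trivalent vertices; a split edge joining a colored vertex to its trivalent neighbor on the leaf side (with label $x_i/y$); or a cut edge $e=(v,c)$ from a trivalent vertex $v\in V_-(T)$ to a colored vertex $c$ on its leaf side (with label $y/x_i$). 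A cut edge carries a fusion move exactly when the sibling of $c$ at $v$ is also colored, and carries no basic move exactly when that sibling is a trivalent vertex $w$; flop, split, and fusion edges are themselves move edges. So it suffices to treat the cut edges $e=(v,c)$ whose sibling $w$ is trivalent.

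Next I would record the elimination mechanism. Because $c$ is colored, $v\in V_-(T)$, and $w$ — the trivalent sibling of $c$ at $v$ — also lies in $V_-(T)$, as the path from $w$ to the root runs $w\to v\to\cdots$ and meets no colored vertex. Applying the balanced condition (satisfied by $\phi_T$ by Proposition \ref{maxhomeo}) at $v$ to the colored vertex $c$, reached from $v$ by the single edge $e$, and to any colored vertex $c'$ reachable through $w$ gives
\[
 \phi_{T,e}=\pi(v,c)=\pi(v,c')=\phi_{T,vw}\cdot\pi(w,c'),
\]
where $\phi_{T,vw}$ is the label of the flop edge $vw$. It therefore remains to realize $\pi(w,c')$ — which the balanced condition guarantees is independent of $c'$ — as a product of labels of move edges.

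The heart of the argument is the following path statement: from any vertex of $T_-$ one may travel toward the leaves across move edges only and thereby reach a colored vertex. Indeed, a vertex of $T_-$ not adjacent to the cut has at least one leaf-side child again in $T_-$, joined to it by a flop edge; following such children and using finiteness of $T_-$, one reaches a vertex of $T_-$ adjacent to the cut. Both of its leaf-side neighbors are then colored, since a trivalent one would again lie in $T_-$ and a leaf one is excluded by the preliminary remark; hence that vertex is a fusion vertex, and the edge leaving it to a colored neighbor is a fusion edge. Applying this to $w$, one obtains a path consisting of flop edges followed by a single fusion edge, all of them move edges, so $\pi(w,c')$ is the product of their labels. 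Substituting into the displayed identity shows that $\phi_{T,e}$ is a monomial in the labels of move edges.

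The step I expect to require the most care is the non-circularity of this elimination: the path issuing from $w$ must never traverse a no-move edge. The construction above does exactly this, since it moves strictly toward the leaves, leaves $T_-$ only along a fusion edge, and so never revisits the cut edge $e$, which sits on the opposite, root side of $w$. The only minor bookkeeping point is that at a fusion vertex the two colored cut edges are forced equal by the balanced condition, so if one wishes, an exit along a ``redundant'' fusion edge may be replaced by its equal sibling at no cost. As the same formula applies to every cut edge with a trivalent sibling — and these are exactly the move-free edges — all simple ratios in $\phi_T$ are monomials in the labels of the edges carrying a basic move, which is the assertion of the lemma.
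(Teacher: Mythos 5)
Your proof is correct and takes essentially the same route as the paper's: identify the move-free edges as exactly those running from a vertex of $V_-(T)$ down to a colored child whose sibling is trivalent, and use the balanced relation to rewrite such a label as the product of move-edge labels along an alternative descending path to another colored vertex. You merely make explicit the construction of that path (descend through the trivalent vertices on the root side of the colored cut until reaching a vertex with two colored children, then exit along a fusion edge), which the paper's proof leaves as an ``observe that there must exist a path'' step.
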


\begin{proof}
The simple ratios for the other edges are redundant: if an edge $e$
doesn't have an associated basic move, it must be one of the lower two
types in Figure \ref{assoc_moves}. Let $v$ be the vertex directly
above $e$.  Observe that there must exist a path from $v$ to another
colored vertex below it, such that every edge in the path is one of
the top four types in Figure \ref{assoc_moves}.  Thus the simple
ratios labeling the edges in that path appear in the reduced chart,
and the relations imply that the product of the simple ratios in that
path is equal to the simple ratio labeling $e$.
\end{proof}
\begin{definition}\label{reduced_chart}
A {\em reduced simple ratio chart} is given by restricting $\phi_T$ to
the edges that have an associated basic move; we call these edges {\em
  reduced chart edges}.
\end{definition}

\begin{example}
In the example of Figure \ref{example_charts}, the reduced simple ratio chart consists of $y/x_1, x_1/x_2, x_3/x_2, x_5/x_3, y/x_5, x_4/y$.  The simple ratio $y/x_3$ is redundant.  %
\end{example}

\section{Colored metric ribbon trees}

The multiplihedra have another geometric realization as {\em colored
  metric ribbon trees}.  
Colored trees were introduced in
Boardman-Vogt \cite{boardman-vogt}, although their construction does
not have the relations described below.

\begin{definition} 
A {\em colored rooted metric ribbon tree} is a colored rooted ribbon
tree with a metric $\lambda: E(T) \to (0,\infty)$ such that the sum of
the edge lengths in any non-self-crossing path from a colored vertex
$v \in V_{\col}(T)$ back to the root is independent of $v \in
V_{\col}(T)$.  A colored rooted metric ribbon tree is {\em stable} if
each colored resp. non-colored vertex has valency at least $2$
resp. $3$.
\end{definition}

\begin{example} 
For the tree in Figure \ref{bicolor}, an edge length map is subject to
the relations
$ \lambda_1 + \lambda_2 + \lambda_3 = \lambda_1 + \lambda_2 +
  \lambda_4 = \lambda_1 + \lambda_2 + \lambda_5 = \lambda_1 +
  \lambda_6 = \lambda_7.  $
\begin{figure}[h]
\includegraphics[height=2in]{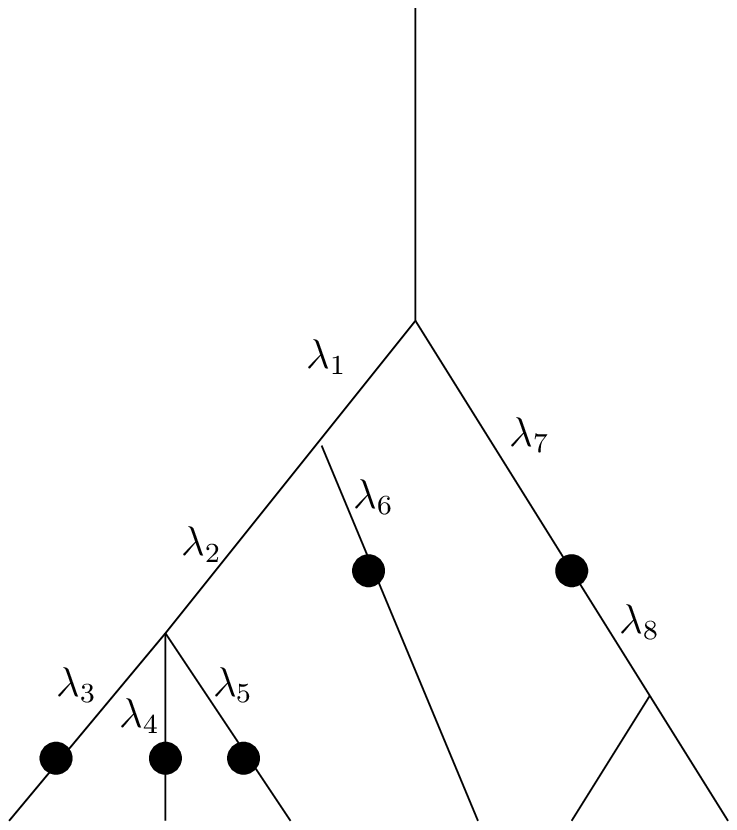}
\caption{A colored ribbon tree.  The relations on $\lambda_1, \ldots, \lambda_8$ imply that $\lambda_3 = \lambda_4 = \lambda_5, \lambda_3 + \lambda_2 = \lambda_6,$ and $\lambda_6 + \lambda_1 = \lambda_7$.}
\label{bicolor}
\end{figure}
\end{example}

For each stable colored tree $T$, we denote by $\WW_{n,1,T}$ the set
of all maps $\lambda$ colored metric trees with underlying colored
tree $\lambda$ and by $\WW_n$ the union
$$ \WW_{n,1} = \bigcup\limits_{T} \WW_{n,1,T}.$$
We define a topology on $\WW_{n,1,T}$ as follows.  Assume that a
sequence $T_i = (T,\{\lambda_i\}_{i\in \N})$ of metric trees converges
for each interior edge $e$ to a non-negative real number.  In other
words, $\lambda_i(e) \to \lambda_\infty(e) \in [0, \infty)$ for every
  $e \in E(T)$.
We say that the corresponding colored metric trees $T_i$ converge to
$T_\infty$ if
\begin{enumerate}
\item $T_\infty$ is the tree obtained from $T$ by collapsing edges $e$
  for which $\lambda_\infty(e) := \lim_{i \to \infty} \lambda_i(e)
  =0$.  This defines a surjective morphism of colored rooted ribbon
  trees, $f: T \to T_\infty$.
\item $V_{\col}(T_\infty) = f(V_\col(T)) $
\item $\lambda_\infty(e) = \lim_{i \to \infty} \lambda_i(f^{-1}(e))$,
if this limit is non-zero. 
\end{enumerate}

\begin{proposition}
$\WW_{n,1,T}$ is a polyhedral cone in $\R^n$, where 
$n = |E(T)| - |V_\col(T)| + 1$.  
\end{proposition}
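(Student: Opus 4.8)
The plan is to realize $\WW_{n,1,T}$ explicitly as the solution set of the defining linear relations and show it is a full-dimensional polyhedral cone of the stated dimension. First I would set up the ambient space. A metric on $T$ is a map $\lambda: E(T) \to (0,\infty)$, so the closed version lives in $\Hom(E(T), \R_{\ge 0}) \cong \R_{\ge 0}^{|E(T)|}$. The coloring condition from the definition of a colored rooted metric ribbon tree imposes that the total edge-length along any non-self-crossing path from a colored vertex $v \in V_{\col}(T)$ back to the root be independent of the choice of $v$. I would fix one reference colored vertex $v_*$ and write, for each other colored vertex $v$, the single linear equation $\sum_{e \in P(v,\text{root})} \lambda(e) = \sum_{e \in P(v_*,\text{root})} \lambda(e)$, where $P(\cdot,\text{root})$ denotes the unique path to the root. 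This gives $|V_\col(T)| - 1$ linear equations cutting out $\WW_{n,1,T}$ inside the nonnegative orthant.

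The main work is to check that these equations are linearly independent, so that the solution subspace has dimension exactly $|E(T)| - (|V_\col(T)| - 1) = |E(T)| - |V_\col(T)| + 1 =: n$. For independence I would argue combinatorially: order the colored vertices and, for each one other than $v_*$, exhibit an edge lying on its path to the root but not on the path of any earlier colored vertex, which makes the corresponding relation introduce a genuinely new constraint. Concretely, since $T$ is a tree the path from a colored vertex to the root is determined, and I would use the ribbon/rooted structure together with the defining property of colored trees (exactly one colored vertex on each leaf-to-root path) to locate such a distinguishing edge; this forces the coefficient matrix of the relations to have full rank $|V_\col(T)| - 1$. Once independence is established, the common solution set is a linear subspace $L \subset \R^{|E(T)|}$ of dimension $n$, and $\WW_{n,1,T}$ is its intersection with the open orthant $(0,\infty)^{|E(T)|}$, hence the relative interior of the polyhedral cone $L \cap \R_{\ge 0}^{|E(T)|}$, which is a polyhedral cone in $\R^n$ after identifying $L \cong \R^n$.

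The step I expect to be the genuine obstacle is proving that the relations are linearly independent while simultaneously confirming the cone is full-dimensional in $L$ — i.e., that the interior $(0,\infty)^{|E(T)|} \cap L$ is nonempty so that $n$ really is the dimension and not merely an upper bound. Linear independence is the trickier half: one must rule out degenerate colored trees where path overlaps could make two relations coincide, and here I would lean on stability (each colored vertex has valence at least $2$, each non-colored vertex at least $3$) to guarantee the tree has enough structure that the distinguishing-edge argument goes through. Full-dimensionality should then follow by constructing an explicit balanced metric: assign all edges above the colored vertices equal small lengths and adjust the root-side edges to equalize path sums, producing a strictly positive solution in the interior of the cone.
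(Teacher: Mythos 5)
Your proposal follows essentially the same route as the paper: realize $\WW_{n,1,T}$ as the solution set of the $|V_\col(T)|-1$ linear path-sum relations inside the nonnegative orthant of $\R^{|E(T)|}$ and count dimensions. The paper simply asserts the independence of the relations and the polyhedral structure (writing dependent variables as combinations of $n$ independent ones and intersecting half-spaces), whereas you additionally verify linear independence via a distinguishing-edge argument and full-dimensionality via an explicit positive balanced metric; both of these checks are correct and go through as you sketch them.
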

\begin{proof}
There is an $\R_+$ action on $\WW_{n,1,T}$, given by $(\delta \cdot
\lambda)(e):= \delta \lambda(e)$, so it is clearly a cone.  The
dimension follows from the fact that there are $|E(T)|$
variables and $|V_\col(T)|-1$ relations.  The polyhedral structure can be
seen by writing $|V_\col(T)| - 1$ variables as linear combinations of $n$
independent variables.  Then the condition that all $\lambda(e) \ge 0$
means that $\WW_{n,T}$ is an intersection of half-spaces.
\end{proof}

\begin{example}
In the example of Figure \ref{bicolor}, $|E(T)| = 8$, and $|V_\col(T)| = 5$. 
We can choose independent variables to be $\lambda_1, \lambda_2, \lambda_3, \lambda_8$, and express the remaining variables as
$ \lambda_4  =  \lambda_3, \ \lambda_5  =  \lambda_3, \ \lambda_6
 =  \lambda_2 + \lambda_3, \ \lambda_7  =  \lambda_1 + \lambda_2 +
\lambda_3.  $
Thus the space of admissible edge lengths is parametrized by points in
the polyhedral cone that is the intersection of $\R_+^4$ (for the
independent variables being non-negative) with the half-spaces
$\lambda_4 \ge 0, \lambda_5 \ge 0, \lambda_6 \geq 0$ and
$\lambda_7\geq 0$.

\end{example}
   
Exponentiating the labellings of the edges gives a map 
$$\Theta: \WW_{n,1} \to M_{n, 1}, \quad (\Theta(\lambda)(e) =
e^{-\lambda(e)}$$
Since $\lambda \geq 0$, the image of a cone $\WW_{n,1,T}$ is
identified directly with the subset of $\ol{M}_{n,1,\leq T}$
consisting of balanced labellings with $\phi_{T,e} \in (0,1]$ for every
$e \in \Edge(T)$.

\begin{example}
Consider the case $n=3$, where we have fixed the parametrization of
the elements of $M_{n,1}$ so that the interior circle is identified
with a line of height $L$ in half-space.  Let $x = z_2-z_1$ and $y =
z_3 - z_2$.  The images of $\WW_{3,1,T}$ subdivide $\R_{>0}^2$ into 6
regions, see Figure \ref{conesmultiplihedron}, each of which
corresponds to a cone in $\R^2$ via the homeomorphism $(x,y) \mapsto
(\log x, \log y)$.
  
\begin{figure}[h]
\includegraphics[height=3in]{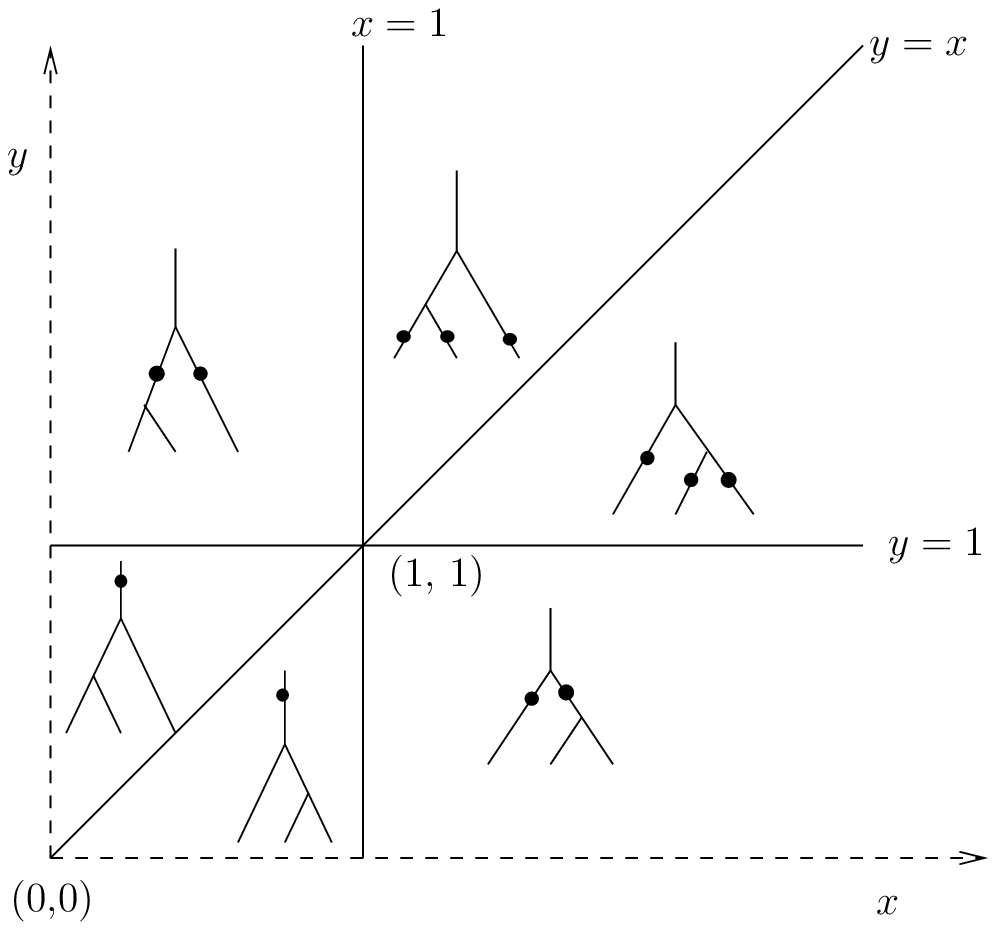}
\caption{The image of the cones of $\WW_{3,1}$ in the moduli space $M_{3,1}$.} 
\label{conesmultiplihedron}
\end{figure}

\end{example}

There is a natural compactification $\ol{\WW}_{n,1}$ of $\WW_{n,1}$ by
allowing edges to have length $\infty$. The map $\Theta$ extends to
the compactifications by taking limits in appropriate charts.

\begin{theorem} \label{homeo3} The map 
$\Theta: \ol{\WW}_{n,1} \to \ol{M}_{n,1}$ is a homeomorphism, with the
  property that for any combinatorial type ${T}$,
  $\Theta(\ol{\WW}_{n,1,T})$ intersects $M_{n,{T},1}$ in a single
  point.
\end{theorem}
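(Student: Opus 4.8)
The plan is to realize $\Theta$ as a continuous bijection between a compact space and a Hausdorff space, so that it is automatically a homeomorphism. The domain $\ol{\WW}_{n,1}$ is a finite union of the cones $\ol{\WW}_{n,1,T}$, each of which is the closed subset of the compact cube $[0,\infty]^{\Edge(T)}$ cut out by the (closed) balancing relations, hence $\ol{\WW}_{n,1}$ is compact; and $\ol{M}_{n,1}$ is compact and Hausdorff by the corollary above. Thus it suffices to prove that $\Theta$ is continuous and bijective.

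The main engine is Proposition \ref{maxhomeo}. Fix a maximal colored tree $T$. The assignment $\lambda\mapsto(e^{-\lambda(e)})_{e\in\Edge(T)}$ sends the additive balancing condition on $\lambda$ (equal sums from each colored vertex to the root) to the multiplicative balancing condition defining $X(T)$, and is a homeomorphism $\exp$ from $\ol{\WW}_{n,1,T}$ onto the set of balanced labellings with all coordinates in $[0,1]$. Composing with the homeomorphism $\phi_T^{-1}\colon X(T)\to\ol{M}_{n,1,\le T}$ of Proposition \ref{maxhomeo} shows that $\Theta=\phi_T^{-1}\circ\exp$ restricts to a homeomorphism of $\ol{\WW}_{n,1,T}$ onto the closed set $\Co_T:=\phi_T^{-1}(\{\varphi\in X(T):\varphi_e\le 1\ \forall e\})\subset\ol{M}_{n,1,\le T}$; in particular $\Theta$ is continuous on each cone. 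Since $\phi_{T,e}(S)=0$ exactly when the type of $S$ carries the edge $e$, the corner $\varphi=0$ (the image of the all-infinite metric $\lambda\equiv\infty$) is the unique point of $\ol{\WW}_{n,1,T}$ whose image has combinatorial type equal to $T$; every other point of the cone has some $\varphi_e>0$ and hence maps to a strictly coarser type. This already yields the second assertion: $\Theta(\ol{\WW}_{n,1,T})$ meets $M_{n,{T},1}$ in the single point $\Theta(\lambda\equiv\infty)$, for any combinatorial type $T$ (the non-maximal cases follow in the same way, or by passing to a refining maximal tree and using Corollary \ref{nonmaxhomeo}).

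It remains to glue the per-cone homeomorphisms into a global bijection, which amounts to showing that the closed chambers $\Co_T$, as $T$ ranges over maximal colored trees, cover $\ol{M}_{n,1}$ and overlap only along their shared faces. Every $[S]$ lies in $\ol{M}_{n,1,\le T}$ for some maximal $T$; if the coordinate $\varphi_e=\phi_{T,e}([S])$ of a reduced-chart edge exceeds $1$, I pass to the maximal tree $T'$ obtained by the basic move (flop, fusion, or splitting) at $e$, under which this coordinate is replaced by its reciprocal. Iterating drives all reduced-chart coordinates into $[0,1]$ and places $[S]$ in a chamber $\Co_{T'}$; this is the analogue for the multiplihedron of the standard ``flip into the correct chamber'' argument for the associahedron (Theorem \ref{homeo2}). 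The chambers meet along the walls $\{\varphi_e=1\}=\{\lambda_e=0\}$, and the relations among the simple ratios $\rho_{ij}$, $\rho_{ijkl}$ recorded in the proof of Proposition \ref{maxhomeo} show that the parametrizations of a common wall coming from $T$ and from $T'$ agree, so $\Theta$ is well defined and continuous across walls; lower-dimensional faces are handled inductively using the local product decomposition of Corollary \ref{nonmaxhomeo}. With the cover and the wall-compatibility in hand, injectivity is immediate and the gluing lemma for closed covers produces a global continuous bijection, hence a homeomorphism.

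I expect the covering-and-compatibility step to be the main obstacle, and the difficulty to be concentrated at the deepest strata, where $\ol{M}_{n,1}$ is not a manifold-with-corners. The first instance is the four-valent singular point of the ``Chinese lantern'' $\ol{M}_{4,1}$: there several chambers come together, and one must check that the different simple-ratio charts glue to a single toric (rather than corner) local model and that $\Theta$ is a local homeomorphism. The balancing relations and the classification of basic moves are exactly the tools that force the charts to match, so this step should reduce to a finite, if delicate, bookkeeping with the fusion and splitting moves that are absent in the associahedral case.
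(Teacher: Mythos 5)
Your proposal follows the same route as the paper: exponentiate the edge lengths and use Proposition \ref{maxhomeo} to identify each compactified cone $\ol{\WW}_{n,1,T}$ with the chamber of balanced labellings taking values in $[0,1]^{\Edge(T)}$, then assemble the chambers. In fact the paper's own proof consists of nothing more than this cone-by-cone identification; the covering, wall-compatibility, and deep-stratum issues you correctly single out as the real work are left implicit there, so your outline is, if anything, the more complete of the two.
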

\noindent This is the colored analog of Theorem \ref{homeo2}.

\begin{proof}
As $\lambda(e) \to \infty$, the identification $\lambda(e) \to \phi_{T,e} = e^{-\lambda}$
implies $\phi_{T,e} \to 0$. Thus the image of a compactified
cone 
$\ol{\WW}_{n,1,T}$ is identified with the subset of $\ol{M}_{n,1,\leq
  T}$ consisting of balanced labellings with $\phi_{T,e} \in [0,1]$ for
every $e \in E(T)$.
\end{proof}

\section{Toric varieties and moment polytopes.}

In this section we show
\begin{theorem}  \label{thmtoric} $\ol{M}_{n,1}$ is homeomorphic to the non-negative
part of an embedded toric variety $V$ in $\P^k(\C)$, where $k$ is the
number of maximal colored trees with $n$ leaves.
\end{theorem}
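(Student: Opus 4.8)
The plan is to assemble $V$ from the local charts of Proposition~\ref{maxhomeo} and then exhibit it as a projective toric variety. \textbf{Local toric charts.} For a maximal colored tree $T$, the balancing conditions defining $X(T)$ are the multiplicative relations $\pi(v_0,v)=\pi(v_0,v')$, which are binomial equations in the edge coordinates $\varphi:\Edge(T)\to\R_{\ge 0}$. Hence $X(T)$ is exactly the non-negative real locus $(V_T)_{\ge 0}$ of the affine toric variety $V_T=\Spec\C[\Edge(T)]/I_T$, where $I_T$ is the binomial ideal generated by the balancing relations, and $G(T)$ is the positive real part of the dense torus of $V_T$. By Proposition~\ref{maxhomeo}, $\phi_T$ identifies $\ol{M}_{n,1,\le T}$ homeomorphically with $(V_T)_{\ge 0}$, so $\ol{M}_{n,1}$ is locally the non-negative part of an affine toric variety.

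\textbf{Gluing.} On an overlap $\ol{M}_{n,1,\le T}\cap\ol{M}_{n,1,\le T'}$ the transition $\phi_{T'}\circ\phi_T^{-1}$ is a Laurent-monomial map: this follows from the Recursion relation $\rho_{ik}=\rho_{ij}/\rho_{jk}$ together with the flop, fusion and splitting relations recorded for the basic moves. Since any two maximal colored trees are joined by a sequence of basic moves, it suffices to verify the monomial form for a single basic move. The $V_T$ then glue along torus-equivariant open immersions into an abstract toric variety $V$, whose dense torus is the complexification of $M_{n,1}\cong\R_{>0}^{\,n-1}$, and the chart identifications give a homeomorphism $\ol{M}_{n,1}\cong V_{\ge 0}$.

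\textbf{Projective realization.} To place $V$ inside $\P^k(\C)$ I pass to the dual, metric-tree picture of Theorem~\ref{homeo3}: exponentiating edge lengths identifies the cones $\WW_{n,1,T}$ with the maximal cones of a complete fan $\Sigma$ in $\R^{\,n-1}$, indexed by the maximal colored trees. I claim $\Sigma$ is the inner normal fan of a convex lattice polytope $P$---a convex realization of the multiplihedron---whose vertices $v_T$ are indexed by the maximal trees. Granting this, $V=X_\Sigma=X_P$, and the monomials $x^{v_T}$ define the embedding $\Phi\colon[S]\mapsto\bigl[\,x^{v_T}([S])\,\bigr]_{T}$ into the projective space $\P^k(\C)$ of the statement, with image closure $V$. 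On the chart $\ol{M}_{n,1,\le T}$ the coordinate $x^{v_T}$ is the distinguished non-vanishing one, so $\Phi$ restricts to the affine toric embedding of $(V_T)_{\ge 0}$; injectivity of $\Phi$ globally follows from the separation already provided by the cross-ratio embedding $\rho_{n,1}$. The standard moment-map homeomorphism $V_{\ge 0}\xrightarrow{\sim}P$ composed with the chart identifications yields $\ol{M}_{n,1}\cong V_{\ge 0}$.

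\textbf{Main obstacle.} The crux is the polytopality of $\Sigma$: showing the fan cut out by the colored metric trees is the normal fan of an actual convex polytope, equivalently producing the lattice realization with vertices $v_T$. The non-simple vertices---already visible as the Chinese-lantern singularity of $\ol{M}_{4,1}$---force $\Sigma$ to be non-simplicial and $V$ to be singular, so one must check that the balancing monoid at each maximal tree really is the affine chart of $X_\Sigma$ at the corresponding non-simplicial cone, and that the moment-map homeomorphism matches the toric singularities of $\ol{M}_{n,1}$ stratum by stratum rather than merely giving an abstract homeomorphism of balls.
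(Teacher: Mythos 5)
Your architecture is right, but the proposal has a genuine gap at exactly the point you flag as the ``main obstacle,'' and that point is not a technicality one can defer: it is the entire content of the theorem. You construct $V$ abstractly by gluing the affine binomial charts $\Spec \C[\Edge(T)]/I_T$ along monomial transitions, obtaining (at best) an abstract complete toric variety $X_\Sigma$; but a complete fan need not be the normal fan of any polytope, and complete toric varieties need not be projective. So the statement ``$V$ is an embedded toric variety in $\P^k(\C)$'' does not follow from your gluing step, and your ``Granting this'' concedes the one assertion that actually has to be proved. The paper resolves this by going in the opposite direction: it never builds $V$ from a fan at all. Instead it writes down, for each maximal colored tree $T$, an explicit integral weight vector $\mu_T = (a_1b_1(1+\delta_1),\ldots,a_{n-1}b_{n-1}(1+\delta_{n-1}), -\sum_i \delta_i a_i b_i)$ adapted from Forcey, defines $V\subset\P^{k-1}(\C)$ directly as the closure of the image of the monomial map $(x_1:\cdots:x_{n-1}:y)\mapsto(\ul{x}^{\mu_{T_1}}:\cdots:\ul{x}^{\mu_{T_k}})$, and then proves (Lemma \ref{flop2_lemma}) that for trees differing by a basic move the ratio $\ul{x}^{\mu_{T'}}/\ul{x}^{\mu_T}$ is a positive power of the simple ratio $\phi_{T,e}$ of the affected edge, and in general a positive monomial in the simple ratios of $T$. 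That lemma is what identifies the non-negative part of each affine slice $V\cap\bA_i$ with the reduced simple ratio chart of Proposition \ref{maxhomeo}, and the convex polytope you want (the hull of the $\mu_T$) comes out afterwards as a corollary via the moment map, rather than being an input.

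Two smaller points. First, your gluing step asserts that $\phi_{T'}\circ\phi_T^{-1}$ is Laurent-monomial ``by the Recursion relation together with the flop, fusion and splitting relations,'' but those relations are exactly what Lemma \ref{flop2_lemma} establishes by the explicit $a_ib_j$ computation; as written you are citing the conclusion you need rather than deriving it. Second, your final concern about matching toric singularities ``stratum by stratum'' is handled in the paper automatically, because the identification of $\ol{M}_{n,1,\le T_1}$ with the non-negative part of $V\cap\bA_1$ is a single explicit map $(\phi_{T_1,e_1},\ldots,\phi_{T_1,e_l})\mapsto(1:\phi_{T_1,e_1}^{m_1}:\cdots:\phi_{T_1,e_l}^{m_l}:*:\cdots:*)$ that is a homeomorphism onto its image for non-negative arguments; no separate stratumwise check is needed once the weight vectors are in hand. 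In short: to complete your proof you must actually produce the lattice polytope (equivalently the weight vectors), and that construction is the theorem.
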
 
\noindent In particular, $\ol{M}_{n,1}$ is isomorphic as a
$CW$-complex to a convex polytope; this reproduces the result of
Forcey \cite{forcey-2007}.  Using this we prove 
the main Theorem \ref{main}. 
First we define the toric variety $V$. 
Recall from Section \ref{simple_charts_2} that a point in $M_{n,1}$ can be
identified with a projective coordinate
$ {\ul{x}} = (x_1: x_2: \ldots: x_{n-1}: y),$
by parametrizing such that $z_0 = \infty$, and setting $x_i = z_{i+1}
- z_i$ and $y$ to be the height of the line.  Let $T$ be a maximal
colored tree.  Adapting the algorithm of Forcey in \cite{forcey-2007},
we associate a {\em weight vector} $\mu_T \in \Z^{n}$ to the tree
$T$ as follows. A pair of adjacent leaves in $T$, say $i$ and $i+1$,
determines a unique vertex in $T$, which we label $v_i$. Let $a_i$ be
the number of leaves on the left side of $v_i$, and let $b_i$ be the
number of leaves on the right side of $v_i$.  Let
\[
\delta_{i} = 
\left\{ 
\begin{array}{cl}
0 &  \mbox{if $v_i$ is below the level of the colored vertices, and}\\
1& \mbox{if $v_i$ is above the colored vertices}.
\end{array}
\right.
\]
Set
\[
\mu_T := (a_1 b_1(1 + \delta_1) , \ldots, a_i b_i(1+\delta_i), \ldots, a_{n-1} b_{n-1}(1+\delta_{n-1}), - \sum\limits_i \delta_i a_i b_i).
\]   
\begin{example}
The tree in Figure \ref{exampleweight2} has weight vector $(2, 16, 6, 1, 4,-14)$, and monomial $x_1^2 x_2^{16} x_3^6 x_4 x_5^4 y^{-14} $.
\begin{figure}[h]
\includegraphics[height=2.5in]{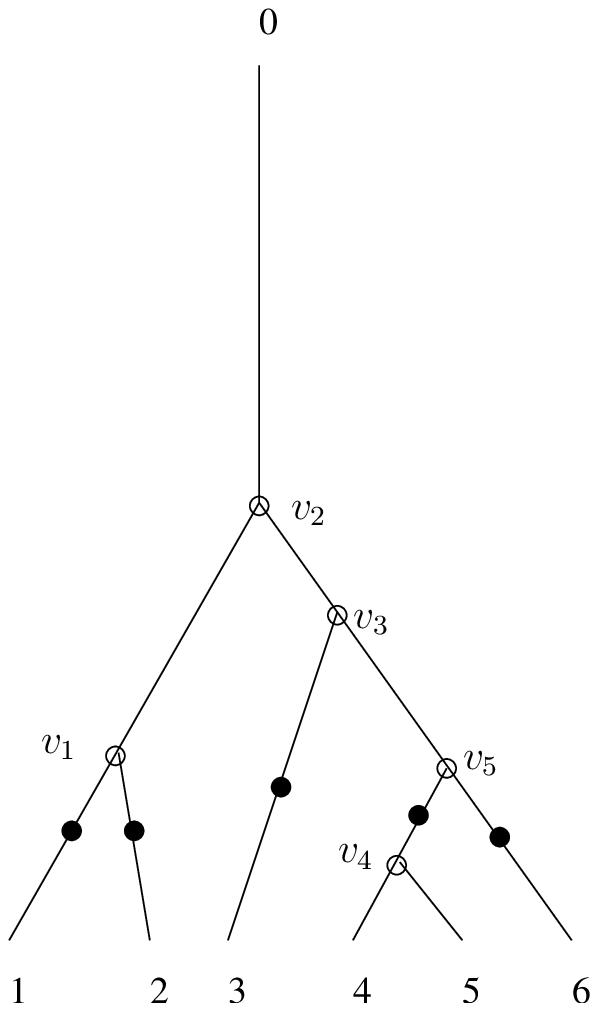}
\caption{A maximal colored tree, whose weight vector is $(2, 16, 6, 1, 4,-14)$.}\label{exampleweight2}
\end{figure}
\end{example}

Fix some ordering of the $k$ maximal colored trees with $n$ leaves,
$T_1, \ldots, T_k$. The projective toric variety $V \subset
\P^{k-1}(\C)$ is the closure of the image of the embedding
\begin{equation}\label{embedding}
( x_1 : \ldots : x_{n-1}: y ) \mapsto ( {\ul{x}}^{\mu_{T_1}} : \ldots:
  {\ul{x}}^{\mu_{T_k}}).
\end{equation}
The entries in the weight vectors always sum to $n(n-1)/2$, so the map is well-defined on the homogeneous coordinates.

\begin{lemma}\label{flop2_lemma}
Suppose that two maximal colored trees $T$ and $T^\prime$ differ by
a single basic move involving an edge $e \in \Edge(T)$.  Let $\phi_{T,e}$ denote
the simple ratio labeling the edge $e$ in the chart determined by $T$.
Then
\[
\frac{{\ul{x}}^{\mu_{T^\prime}}}{{\ul{x}}^{\mu_{T}}} = \phi_{T,e}^m
\]
for an integer $m>0$.  In general, for two maximal trees $T$ and $T^\prime$,
\[
\frac{{\ul{x}}^{\mu_{T^\prime}}}{{\ul{x}}^{\mu_{T}}} = \phi_{T,e_1}^{m_1} \phi_{T,e_2}^{m_2} \ldots \phi_{T,e_r}^{m_r}
\]
for some edges $e_1, \ldots, e_r$ of $T$ and positive integers $m_1, \ldots, m_r$.
\end{lemma}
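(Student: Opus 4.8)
The plan is to prove the single--move statement by a direct computation with the weight vectors, and then to bootstrap to the general case by composing basic moves; the serious point will be the \emph{positivity} of the exponents.

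For the first assertion I would compare the data $(a_i,b_i,\delta_i)$ of $T$ and $T'$, using that a basic move alters this data only very locally. In a \emph{flop} on an edge $e$ joining trivalent vertices $v_r$ (below) and $v_s$ (above), no colored vertex lies strictly between $v_r$ and $v_s$, so both stay on the same side of the colored level and every $\delta_i$ is preserved; only the left/right leaf counts of $v_r,v_s$ change. Writing the three subtrees meeting the contracted edge as $X,Y,Z$ in planar order, with $v_s$ grouping $X$ with $Y\cup Z$ and $v_r$ grouping $Y$ with $Z$, one computes $\Delta(a_sb_s)=-|X||Z|$ and $\Delta(a_rb_r)=+|X||Z|$, so the $x_r$-- and $x_s$--exponents of $\mu$ change by $\pm|X||Z|(1+\delta)$ and the $y$--exponent is unchanged. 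Since $\phi_{T,e}=x_r/x_s$, this gives ${\ul{x}}^{\mu_{T'}-\mu_T}=\phi_{T,e}^{\,m}$ with $m=|X||Z|(1+\delta)>0$. In a \emph{fusion} or \emph{splitting} through $v_i$ the bracketing (hence all $a_j,b_j$) is unchanged and exactly one indicator flips, namely $\delta_i$, as $v_i$ crosses the colored cut while all other vertices keep their side; then the $x_i$-- and $y$--exponents change by $\mp a_ib_i$ and $\pm a_ib_i$, so ${\ul{x}}^{\mu_{T'}-\mu_T}=(x_i/y)^{a_ib_i}$ or $(y/x_i)^{a_ib_i}$, which is $\phi_{T,e}^{\,a_ib_i}$ for the edge $e$ from $v_i$ to its colored neighbour. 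In every case $m>0$.

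For the general statement I would use that any two maximal colored trees are joined by a sequence of basic moves $T=S_0\to S_1\to\cdots\to S_L=T'$ and telescope:
$$\frac{{\ul{x}}^{\mu_{T'}}}{{\ul{x}}^{\mu_T}}=\prod_{l=1}^{L}\frac{{\ul{x}}^{\mu_{S_l}}}{{\ul{x}}^{\mu_{S_{l-1}}}}=\prod_{l=1}^{L}\phi_{S_{l-1},e_l}^{\,m_l},\qquad m_l>0.$$
Each factor is a simple ratio \emph{for $S_{l-1}$}, so it remains to rewrite everything in the simple ratios of $T$ with non-negative exponents (dropping the vanishing ones then yields the edges $e_1,\dots,e_r$ and positive integers $m_1,\dots,m_r$ of the statement). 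The transition rule I would establish, case by case exactly as above, is: under a single move $T\to S$ on an edge $e$, every $\phi_{S,f}$ pulls back to a monomial in the $\phi_{T,\cdot}$ with non-negative exponents, with one exception --- the \emph{reverse edge} $f_0\in\Edge(S)$, whose move undoes $T\to S$, satisfies $\phi_{S,f_0}=\phi_{T,e}^{-1}$. Indeed, edges disjoint from the move are unchanged, the single subtree that reattaches across the move picks up one positive factor of $\phi_{T,e}$, and only $f_0$ inverts $\phi_{T,e}$.

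The main obstacle is precisely this positivity: a reverse edge contributes a negative factor $\phi_{T,e}^{-1}$, and one must show such contributions never survive. Invariantly this is the assertion that $\mu_T$ is a vertex of the weight polytope $\mathrm{conv}\{\mu_{T_i}\}$ whose tangent cone is generated exactly by the exponent vectors of the basic-move ratios $\phi_{T,e}$, and that this vertex is smooth, so that $\mu_{T'}-\mu_T$ --- a lattice point of the polytope based at $\mu_T$ --- is automatically a non-negative integer combination of those generators. I would make this effective by taking $S_1$ to be the first vertex of a \emph{shortest} basic-move path from $T$ to $T'$ and proving that the reverse edge $f_0$ of the step $T\to S_1$ then occurs with exponent $0$ in the $S_1$--expansion of ${\ul{x}}^{\mu_{T'}-\mu_{S_1}}$ (it points away from $T'$), so that the induction on $L$ never produces a negative power of any $\phi_{T,\cdot}$; equivalently, one argues that ${\ul{x}}^{\mu_{T'}-\mu_T}$ stays bounded on the chart $\ol{M}_{n,1,\le T}$, a neighbourhood of the vertex $v_T$ at which all $\phi_{T,e}$ vanish, whence a monomial in the $\phi_{T,e}$ bounded as those variables tend to $0$ can only have non-negative exponents. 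Verifying this shortest-path (or boundedness) claim is the step I expect to require the most care.
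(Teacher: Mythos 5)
Your treatment of a single basic move is correct and is essentially the computation in the paper: a flop changes only the two affected entries of the weight vector and the quotient is $(x_i/x_j)^{a_ib_j}=\phi_{T,e}^{a_ib_j}$, while a fusion/splitting changes only the $i$-th entry and the $y$-exponent, giving $(y/x_i)^{a_ib_i}$; in each case the exponent is a positive integer.

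The general case is where your proposal has a genuine gap. You telescope over an arbitrary sequence of basic moves and must then re-express each factor $\phi_{S_{l-1},e_l}$ in the chart of $T$; you correctly identify that the reverse edge of each step threatens a negative exponent of some $\phi_{T,e}$, but you do not close this. Of the strategies you sketch, two are circular in this context: the smoothness of the vertex $\mu_T$ of the weight polytope and the boundedness of ${\ul{x}}^{\mu_{T'}-\mu_T}$ on the chart $\ol{M}_{n,1,\le T}$ are consequences of this lemma (it is exactly what is used to prove Theorem \ref{thmtoric} and its corollary), so they cannot be assumed here. The third, the claim that along a shortest basic-move path the reverse edge of the first step never reappears, is plausible but unproven, and you flag it yourself as the delicate point. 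The paper takes a different route that makes positivity automatic: it decomposes the passage from the fixed tree $T$ to any maximal tree $T'$ into a sequence of \emph{independent} basic moves, meaning each move toggles the order of a \emph{distinct} pair of vertices in the partial order on $V(T)$. Independence guarantees that when the step toggling $(v_i\le v_j)$ is performed, the relation $v_i\le v_j$ held already in $T$, so the monomial $x_i/x_j$ (or $y/x_i$, $x_i/y$) contributed by that step is the product, with exponent $+1$ each, of the chart ratios $\phi_{T,e}$ along the path from $v_i$ to $v_j$ in $T$; no inversion ever occurs and the induction closes. To complete your version you would need either to prove your shortest-path claim or to import this independence decomposition (which the paper itself asserts without proof, so it deserves at least a sentence of justification in either write-up).
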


\begin{proof}
  
First let us consider the case of a single
flop. Without loss of generality consider the situation in Figure
\ref{flop_tree}.  Say $T$ is on the left, and $T^\prime$ is on the
right, and the affected edges are in bold.
\begin{figure}[h]
\includegraphics[height=2in]{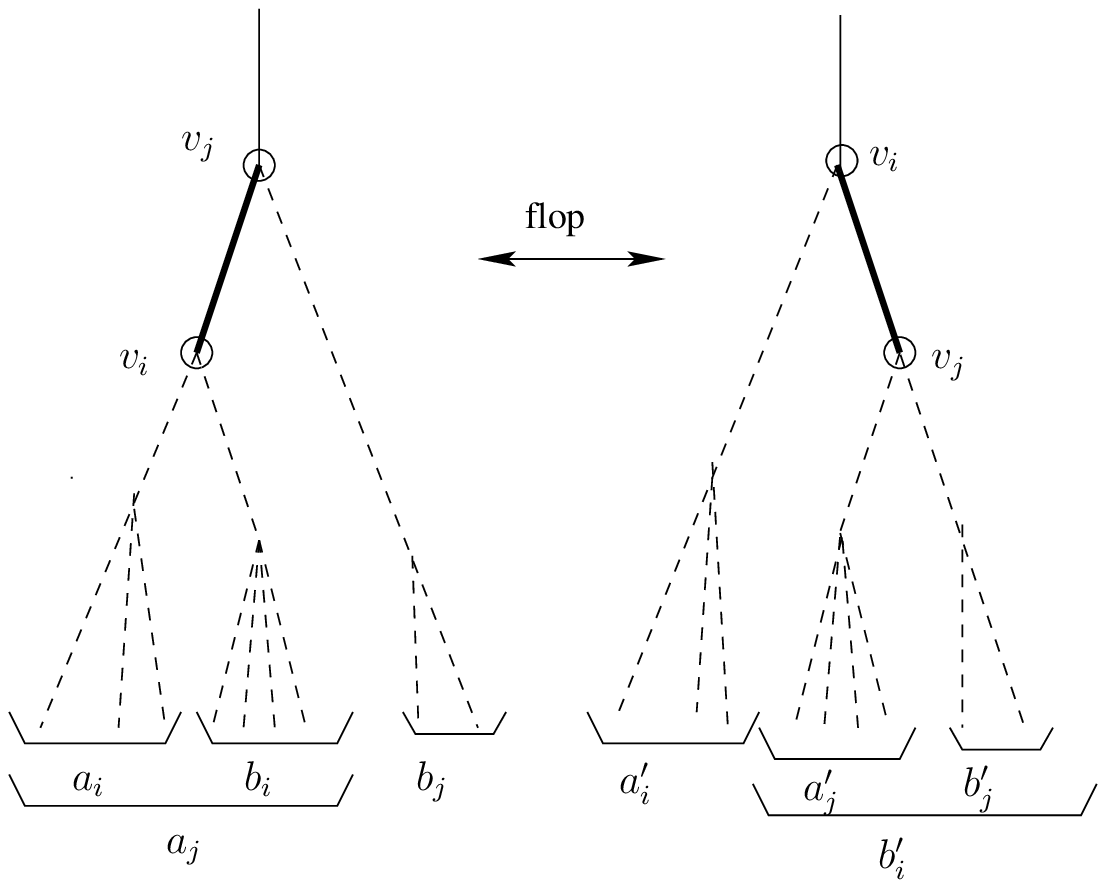}
\caption{The effect of a flop on weight vectors.}\label{flop_tree}
\end{figure}
The weight vectors $\mu_T$ and $\mu_{T^\prime}$ are the same in all entries except entries $i$ and $j$, where
$$
(\mu_T)_i  =  a_i b_i, \quad 
(\mu_T)_j  =  a_j b_j = (a_i + b_i) b_j, \quad (\mu_{T^\prime})_i = a_i (b_i + b_j),  \quad 
 (\mu_{T^\prime})_j  = b_i b_j.
$$
Therefore,
$$ 
\frac{{\ul{x}}^{\mu_{T^\prime}}}{{\ul{x}}^{\mu_{T}}}  =  \frac{x_i^{a_i (b_i + b_j)} x_j^{b_i b_j}}{x_i^{a_i b_i} x_j^{(a_i + b_i) b_j}}
 =  \frac{x_i^{a_i b_j}}{x_j^{a_i b_j}}
 =  \left( \frac{x_i}{x_j} \right)^{a_i b_j}
$$
and observe that $\phi_{T,e} = x_i/x_j$ is the ratio labeling that edge of $T$, and $a_ib_j \geq 1$.

For the other kinds of basic move it suffices to consider fusion, in which a pair of colored vertices are below $v_i$ in $T$, and above $v_i$ in $T^\prime$.  The weight vectors
of $T$ and $T^\prime$ are identical in all entries except for the
$i$-th entry, which corresponds to the exponent of $x_i$, and the
$n+1$-th entry, which corresponds to the exponent of $y$:
$$
(\mu_T)_i  =  2 a_i b_i, \quad 
\mu_{T^\prime}  =  a_i b_i, \quad 
(\mu_{T^\prime})_{n+1} - (\mu_{T})_{n+1}  =  -(0) - (- a_i b_i)
$$
Therefore 
$$
\frac{{\ul{x}}^{\mu_{T^\prime}}}{\mu_{T}}  =  \frac{x_i^{a_i b_i}y^{-0}}{x_i^{2 a_i b_i} y^{-a_i b_i}}
                                      =  \frac{y^{a_i b_i}}{x_i^{a_i b_i}}
                                      =  \left( \frac{y}{x_i}\right)^{a_i b_i}$$
where $\phi_{T,e} = y/x_i$ is the ratio labeling the two edges below $v_i$ of
$T$, and $a_ib_j \geq 1$.

The vertices are partially ordered by their positions in the tree; the
effect of basic moves on the partial ordering are individual changes $(v_i
\leq v_j) \leftrightarrow (v_j \leq v_i)$, or $(v_\col \leq v_i) \leftrightarrow (v_i \leq v_\col)$, between adjacent vertices.  In general, every maximal tree is
obtained from a {\em fixed} tree $T$ by a sequence of independent basic 
moves -- by independent we just mean that each one involves a
different pair of vertices.  We prove the general case by induction on the number of
independent basic moves needed to get from a fixed maximal tree $T$, to any
other maximal tree $T^\prime$.  Having proved the base case,
now consider a tree $T^\prime$ obtained after a sequence of $k+1$
flops. Write $\widetilde{T}$ for a tree which is $k$ independent moves
away from $T$ and one move away from $T^\prime$.  Suppose that the the
final move between $\widetilde{T}$ and $T^\prime$ is described by the
$(v_i \leq v_j) \to (v_j \leq v_i)$. By the inductive 
hypothesis and the base step, 
$$ \frac{{\ul{x}}^{\mu_{T^\prime}}}{{\ul{x}}^{\mu_{T}}}
 =  \frac{{\ul{x}}^{\mu_{T^\prime}}}{{\ul{x}}^{\mu_{\widetilde{T}}}} \frac{{\ul{x}}
^{\mu_{\widetilde{T}}}}{{\ul{x}}^{\mu_T}}  = 
\left(\frac{x_i}{x_j} \right)^m \phi_{T,e_1}^{m_1} \phi_{T,e_2}^{m_2} \ldots \phi_{T,e_r}^{m_r}$$ 
for some positive integers $m_1, \ldots, m_r$ and $m$, and some edges
$e_1, \ldots, e_r$ of $T$. Since none of the
previous flops involved the pair $v_i$ and $v_j$, the partial order in
the original tree $T$ must have also had $v_i \leq v_j$, although they
were possibly not adjacent in $T$.  In any case, the ratio $x_i/x_j$
is a product of the ratios in the chart $\phi_T$ labeling the edges from $v_i$ to $v_j$.
The case where the final move is one of $(v_\col, v_i) \leftrightarrow (v_i,v_\col)$ is 
similarly straightforward.  This completes the inductive step.
 \end{proof}

\begin{proof}[Proof of Theorem \ref{thmtoric}] 

We use Lemma \ref{flop2_lemma} to identify the simple ratios in a
reduced chart for each maximal colored tree $T_i$, with the
non-negative part of the affine slice $V \cap \mathbb{A}_i$.  Consider
$T_1$.  The affine piece $V \cap \mathbb{A}_1$ consists of all points
\[
\left( 1: \frac{{\ul{x}}^{\mu_{T_2}}}{{\ul{x}}^{\mu_{T_1}}}: \ldots : \frac{{\ul{x}}^{\mu_{T_k}}}{{\ul{x}}^{\mu_{T_1}}} \right)
\] 
where the entries may be 0. Now let $\phi_{T_1,e_1}, \ldots, \phi_{T_1,e_l}$ be the simple
ratio coordinates in a reduced chart for the open set $M_{n,1,\leq T_1} \subset
\ol{M}_{n,1}$ (Definition \ref{reduced_chart}).  By construction, the
edges $e_1,\ldots, e_l$ of $T_1$ have associated
basic moves. Thus they determine a set $T_1(e_1), \ldots, T_1(e_l)$ of
maximal colored trees, where each 
$T_1(e_i)$ is obtained from $T_1$ by the basic move associated to
the edge $e_i$.  Without loss of generality, assume that
the $l$ maximal trees $T_2, \ldots, T_{l+1}$ are respectively
$T_1(e_1), \ldots, T_1(e_l)$. By Lemma \ref{flop2_lemma}, we
identify the non-negative part of $V \cap \mathbb{A}_1$ with the chart
$\phi_{T_1}$ by the map
$$ \ol{M}_{n,1,T_1}  \lra  V\cap \mathbb{A}_1 \quad  (\phi_{T_1,e_1}, \ldots, \phi_{T_1,e_l})  \mapsto
 (1: \phi_{T_1,e_1}^{m_1} : \phi_{T_1,e_2}^{m_2}: \ldots : \phi_{T_1,e_l}^{m_l}: * : \ldots :*) $$
where $m_1, \ldots, m_l$ are positive integers depending on the
combinatorics of $T_1$, and the entries labeled $*$ are higher products of
$\phi_{T_1,e_1}, \ldots, \phi_{T_1,e_l}l$.  This map is well-defined, one-to-one and onto for
$\phi_{T_1,e_i}$ and $\phi_{T_1,e_i}^{m_i}$ which are all in the non-negative range $ [0,\infty)$.
\end{proof}

\begin{corollary} $\ol{M}_{n,1}$ is $CW$-isomorphic to the convex hull of the weight vectors in $\R^n$, and thus $CW$-isomorphic to a $(n-1)$-dimensional polytope. 
\end{corollary}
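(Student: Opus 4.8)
The plan is to combine Theorem \ref{thmtoric} with the standard dictionary between projective toric varieties and their moment polytopes. By Theorem \ref{thmtoric}, $\ol{M}_{n,1}$ is homeomorphic to the non-negative real part $V_{\ge 0}$ of the toric variety $V \subset \P^{k-1}(\C)$ cut out by the embedding \eqref{embedding} with weight vectors $\mu_{T_1}, \ldots, \mu_{T_k}$. For such a variety the algebraic moment map
\[
\Phi: V_{\ge 0} \to \R^n, \qquad (p_1 : \cdots : p_k) \mapsto \frac{\sum_i p_i \, \mu_{T_i}}{\sum_i p_i},
\]
restricts to a homeomorphism of $V_{\ge 0}$ onto the convex hull $\Delta := \on{conv}(\mu_{T_1}, \ldots, \mu_{T_k})$; this is the non-negative-real version of the Atiyah--Guillemin--Sternberg description of the moment image of a projective toric variety embedded by lattice points (as in Fulton's book on toric varieties, or Sottile's account of real toric varieties and the moment map). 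Composing with the homeomorphism of Theorem \ref{thmtoric} yields a homeomorphism $\ol{M}_{n,1} \to \Delta$.

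First I would upgrade this homeomorphism to a CW-isomorphism. In the toric dictionary, the torus-orbit closures of $V$ are in inclusion-preserving bijection with the faces of $\Delta$, and $\Phi$ carries the non-negative part of each orbit onto the relative interior of the corresponding face. On the other hand, the proof of Theorem \ref{thmtoric} matches the reduced charts $\phi_{T_i}$ with the affine slices $V \cap \mathbb{A}_i$, so the stratification of $\ol{M}_{n,1}$ by combinatorial types $M_{n,1,T}$ corresponds exactly to the orbit stratification of $V_{\ge 0}$. Tracking this correspondence through $\Phi$ shows that each stratum $M_{n,1,T}$ maps homeomorphically onto the relative interior of a face of $\Delta$, with incidences governed by the edge-contraction partial order $T_0 \le T_1$; this is precisely a CW-isomorphism onto $\Delta$.

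It remains to compute the dimension. Each weight vector $\mu_T$ has coordinate sum
\[
\sum_{i=1}^{n-1} a_i b_i (1 + \delta_i) \;-\; \sum_i \delta_i a_i b_i \;=\; \sum_{i=1}^{n-1} a_i b_i \;=\; \binom{n}{2},
\]
the last equality being the standard identity that each of the $\binom{n}{2}$ pairs of leaves is separated at exactly one trivalent vertex $v_i$. Thus all $\mu_{T_j}$ lie on the affine hyperplane $\{\sum_j x_j = \binom{n}{2}\}$ in $\R^n$, so $\dim \Delta \le n-1$; since $\ol{M}_{n,1}$ has dimension $n-1$ and maps homeomorphically onto $\Delta$, we conclude $\dim \Delta = n-1$ and $\Delta$ is a genuine $(n-1)$-dimensional polytope.

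The main obstacle is the CW-isomorphism rather than the bare homeomorphism: one must verify that the face lattice of $\Delta$ is identified with the poset of colored ribbon trees under edge contraction, not merely that the two spaces are homeomorphic. The non-negative moment-map homeomorphism and the stratum-matching from Theorem \ref{thmtoric} supply the two halves of this identification, but care is needed to check that the orbit closure corresponding to a given face of $\Delta$ is exactly the closed stratum $\ol{M}_{n,1,\le T}$ for the appropriate $T$, i.e.\ that vanishing of a simple ratio $\phi_{T,e}$ matches vanishing of the corresponding monomial quotient. This last point is exactly what Lemma \ref{flop2_lemma} controls, since it expresses each ratio $\ul{x}^{\mu_{T'}}/\ul{x}^{\mu_T}$ as a positive power product of the $\phi_{T,e}$.
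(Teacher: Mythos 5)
Your proposal is correct and follows essentially the same route as the paper, which simply invokes the moment-map homeomorphism from the non-negative part of a toric variety onto the convex hull of its weight vectors (citing \cite{fulton-intro} and \cite{sottile-2002}). The additional details you supply --- the coordinate-sum identity $\sum_i a_i b_i = \binom{n}{2}$ placing the weight vectors on an affine hyperplane, and the matching of strata to faces via Lemma \ref{flop2_lemma} --- are exactly the points the paper leaves implicit in its one-line citation, and they are handled correctly.
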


\begin{proof}
The non-negative part of a projective toric variety constructed with
weight vectors is homeomorphic, via the moment map, to the convex hull
of the weight vectors (see, for example, \cite{fulton-intro},
\cite{sottile-2002}).
\end{proof}

\begin{proof}[Proof of Theorem \ref{main}]
By induction on $n$: The one-dimensional spaces $\ol{M}_{2,1},
\ol{M}_3, J_{2}$ and $K_3$ are all compact and connected, and so
$CW$-isomorphic.  It suffices, therefore, to show that $\ol{M}_{n,1}$
is the cone on its boundary. This is true since it is homeomorphic to
a convex polytope.
\end{proof}

\section{Stable weighted disks}

Fukaya, Oh, Ohta, and Ono \cite{fooo} introduced another geometric
realization of the multiplihedron, although the CW-structure is
slightly different.  
A {\em weighted stable $n+1$-marked disk} consists of
\begin{enumerate}
\item a stable nodal disk $(\ul{\Sigma} = (\Sigma_1,\ldots, \Sigma_m),
  \ul{z} = (z_0,\ldots, z_n))$
\item  for each component $\Sigma_1,\ldots, \Sigma_m$ of $\ul{\Sigma}$,
a {\em weight} $x_i \in [0,1]$
\end{enumerate}
with the following property: if $\Sigma_i$ is further away from
$\Sigma_j$ from the {\em root marking} $z_0$ then $x_i \leq
x_j$.  An {\em isomorphism} of rooted stable disks is an
isomorphism of stable disks intertwining with the weights.
Let $\ol{M}^w_{n+1}$ denote the moduli space of stable weighted marked
disks, equipped with the natural extension of the Gromov topology in
which a sequence $(\ul{\Sigma}_i, \ul{z}_i, \ul{\lambda}_i)$ converges
to $(\ul{\Sigma},\ul{z},\ul{\lambda})$ if $(\ul{\Sigma}_i, \ul{z}_i)$
Gromov converges to $(\ul{\Sigma},\ul{z})$ and the weights on the
limit curve are pulled back from those on $\ul{\Sigma}$ via the
morphism of trees appearing in the limit.  

For example, $\ol{M}^w_{3}$ is an interval; $\ol{M}^w_{4}$ is a
hexagon consisting of a square and two triangles, joined along two
edges, see Figure \ref{weighted4}.  Each triangle is defined by the
inequality $0 \leq x_2 \leq x_1 \leq 1$.  The moduli space
$\ol{M}^w_5$ has $23$ cells of dimension $2$ on the boundary ($2$
projecting onto $2$-cell of $\ol{M}_5$, $10$ projecting onto $1$-cells
of $\ol{M}_5$, and $11$ projecting onto vertices of $\ol{M}_5$.)  On
the other hand, the multiplihedron $\ol{M}_{4,1}$ has $13$ cells of
dimension $2$ on the boundary, see Figure \ref{singularity}.

\begin{figure}[ht]
\begin{picture}(0,0)%
\includegraphics{weighted4.pstex}%
\end{picture}%
\setlength{\unitlength}{3947sp}%
\begingroup\makeatletter\ifx\SetFigFont\undefined%
\gdef\SetFigFont#1#2#3#4#5{%
  \reset@font\fontsize{#1}{#2pt}%
  \fontfamily{#3}\fontseries{#4}\fontshape{#5}%
  \selectfont}%
\fi\endgroup%
\begin{picture}(4224,1704)(1189,-2653)
\put(3248,-1853){\makebox(0,0)[lb]{{{$x_1$}%
}}}
\put(1988,-2011){\makebox(0,0)[lb]{{{$x_1$}%
}}}
\put(1988,-1591){\makebox(0,0)[lb]{{{$x_2$}%
}}}
\put(4578,-1579){\makebox(0,0)[rb]{{{$x_2$}%
}}}
\put(4595,-1970){\makebox(0,0)[rb]{{{$x_1$}%
}}}
\end{picture}%

\caption{Moduli of weighted $4$-marked disks}
\label{weighted4}
\end{figure}

\section{Stable scaled affine lines.}\label{complex_multi}

In this section we re-interpret the moduli space of quilted disks as a
{\em moduli space of stable scaled lines}.  This construction has the
advantage that it works for any field.  Working over $k =\C$ gives a
moduli space introduced Ziltener's study \cite{zilt:phd} of gauged
pseudoholomorphic maps from the complex plane; we show it is a
projective variety with toric singularities.

\begin{definition} Let $k$ be a field.  
A {\em scaled marked line} is a datum $(\bA,\ul{z},\phi)$, where $\bA$
is an affine line over $k$, $\ul{z} = (z_1,\ldots,z_n) \in \bA$ are
distinct points, and $\phi \in \Omega^1(\bA,k)^k$ is a translationally
invariant area form.  An {\em isomorphism of scaled marked lines} is
an isomorphism $\psi: \bA \to \bA$ that intertwines the area forms and
markings.  A scaled line is {\em stable} if the automorphism group is
finite, that is, has at least one marking.  Denote by $M_{n,1}(k)$ the
corresponding moduli space of stable, scaled marked lines.
\end{definition}

In the case $k = \R$, $M_{n,1}(\R)$ has as a component (given by
requiring that the markings appear in order) the moduli space
$M_{n,1}$ of the previous section, through identifying $(z_1, \ldots,
z_n, \phi)$ with $(z_0 = \infty, z_1, \ldots, z_n, L_\phi)$, where
$L_\phi \subset \bA$ is a line of height $1/\phi(1)$.  $M_{n,1}(k)$
has a natural compactification, obtained by allowing the points to
come together and the volume form $\phi$ to scale.  For any nodal
curve $\ol{C}$ with markings $z_0,\ldots, z_n$, and component
$\ol{C}_\alpha$ of $\ol{C}$, we write ${z}_{\alpha i}$ for the special
point in $\ol{C}_\alpha$ that is either the marking $z_i$, or the node
closest to $z_i$.

\begin{definition} A {\em (genus zero) nodal scaled
marked line} is a datum $(\ol{C},z,\phi)$, where
  $\ol{C}$ is a (genus zero) projective nodal curve, $\ul{z} =
  (z_0,\ldots,z_n)$ is a collection of markings disjoint from the
  nodes, and for each component $\ol{C}_\alpha$ of $\ol{C}$,
  the affine line $C_\alpha := \ol{C}_\alpha \setminus\{ {z}_{\alpha 0}\}$ is
  equipped with a (possibly zero or infinite) translationally
  invariant volume form $\phi_i \in \Omega^1(C_\alpha,k)^k$.  We call a
  volume form $\phi_i$ {\em degenerate} if it is zero or infinite.  An
  automorphism of a stable nodal scaled curve is an automorphism of
  the nodal curve preserving the volume forms and the markings.  A
  nodal scaled marked curve is {\em stable} if it has finite
  automorphism group, or equivalently, if each component with
  non-degenerate (resp. degenerate) volume form has at least two
  (resp. three) special points.
\end{definition}

The affine structure on $C_\alpha$ is unique up to dilation, so that
$\Omega^1(C_\alpha,k)^k$ is well-defined.  The {\em combinatorial type} of
a nodal scaled marked affine line is a rooted colored tree: 
Vertices represent components of the nodal curve, edges represent
nodes, labeled semi-infinite edges represent the markings, with the
root always labelled by $z_0$.  Every path from a leaf back to the
root must pass through exactly one colored vertex.

Now we specialize to the case $k = \C$.  $M_{n,1}(\C)$ contains as a
subspace those scaled marked curves such that all markings lie on the
projective real line, $\R P:=\R \cup \{\infty\}$; these are naturally
identified with marked disks.  More accurately, $M_{n,1}(\C)$ admits
an antiholomorphic involution induced by the antiholomorphic
involution of $\P^1(\C)$.  The involution extends to an
antiholomorphic involution of $\ol{M}_{n,1}(\C)$. The multiplihedron
$\ol{M}_{n,1}$ can be identified with the subset of the fixed point
set such that the points are in the required order.

We introduce coordinates on $M_{n,1}(\C)$ in the same way as we did
for $M_{n,1}$.  Define two types of coordinates, of the form
$\rho_{ijkl}$ where $i, j, k, l$ are distinct indices in $0, 1,
\ldots, n$, and of the form $\rho_{ij}$, where $i,j$ are distinct
indices in $1, \ldots, n$. The $\rho_{ijkl}$ are defined as before,
and the $\rho_{ij}$ are defined as follows: given a representative
$(z_1, z_2, \ldots, z_n, \phi)$, 
$$ \rho_{ij}([z_1, \ldots, z_n, \phi]) := (\phi(1)(z_j -
  z_i))^{-1}.$$
The coordinates extend to the compactification $\ol{M}_{n,1}(\C)$. For
the coordinates $\rho_{ijkl}$, we evaluate the cross-ratio at a component $\ol{C}_\alpha$ in the bubble tree
for which at least three of $z_{\alpha i}, z_{\alpha j}, z_{\alpha k}$
and $z_{\alpha l}$ are distinct, normalizing by 
\begin{equation}\label{norm}
\rho_{ijkl} = \left\{ \begin{array}{ccccc} \infty, & \mbox{if} & z_{\alpha i} =
  z_{\alpha j} & \mbox{or} & z_{\alpha k} = z_{\alpha l},\\ 1, & \mbox{if} & z_{\alpha i} = z_{\alpha k} &
  \mbox{or} & z_{\alpha j} = z_{\alpha l},\\ 0, & \mbox{if} & z_{\alpha i} = z_{\alpha l} & \mbox{or} &
  z_{\alpha j} = z_{\alpha k}.
\end{array}\right.
\end{equation}

For the coordinates $\rho_{ij}$, we evaluate them at the unique
component $\ol{C}_\alpha$ at which $z_{\alpha 0}, z_{\alpha i}$ and
$z_{\alpha j}$ are distinct, normalizing by
\[
\rho_{ij} = \left\{ \begin{array}{ll} 0, & \mbox{if $C_\alpha$ has
    infinite scaling,}\\ \infty, & \mbox{if $C_\alpha$ has zero scaling.}
\end{array}
\right.
\]  
The same arguments as in the real case show that the product of
forgetful morphisms defines an embedding
$$ \rho_{n,1}: \ol{M}_{n,1} \to (\P^1(\C))^{(n+1)n(n-1)(n-2)/4! +
  n(n-1)/2} .$$
The coordinates $\rho_{ijkl}$ and $\rho_{ij}$ also satisfy recursion relations
$$
\rho_{jklm} = \frac{\rho_{ijkm}-1}{\rho_{ijkm}-\rho_{ijkl}}\label{recursion},
\quad 
\rho_{jk} = \frac{\rho_{ij}}{\rho_{ijk0}}\label{relation}.
$$
Let $\ol{A}_{n,1}(\C)$ denote the closure of the algebraic variety
defined by the two types of cross ratio coordinate and the relations
(\ref{recursion}).

 \begin{theorem}\label{bijection}
 The map $\rho_{n,1}: \ol{M}_{n,1}(\C) \to \ol{A}_{n,1}(\C)$ is a bijection.
 \end{theorem}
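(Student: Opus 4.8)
The plan is to adapt the proof of Theorem~\ref{injection_n} (McDuff--Salamon, Appendix~D), which already controls the underlying marked curve, and to treat the new scaling coordinates $\rho_{ij}$ separately. I regard $\rho_{n,1}$ as recording two packets of data: first all the ordinary cross-ratios $\rho_{ijkl}$, and second all the scaling coordinates $\rho_{ij}$. I will show that the first packet reconstructs the \emph{stabilized} underlying marked curve, that the second packet reconstructs the non-degenerate (colored) components together with the volume forms, and that these two reconstructions glue to a unique stable scaled marked line; surjectivity I will obtain from compactness.

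For injectivity, suppose $S=(\ol{C},\ul{z},\ul{\phi})$ and $S'=(\ol{C}',\ul{z}',\ul{\phi}')$ have the same image. Forgetting the volume forms and stabilizing sends $S$ to a genus-zero stable $(n{+}1)$-marked curve, and the coordinates $\rho_{ijkl}$ are exactly the cross-ratios of that curve; by Theorem~\ref{injection_n} the stabilizations of $S$ and $S'$ are isomorphic, so I may identify their combinatorial types and the positions of all markings and nodes. What remains invisible to the $\rho_{ijkl}$ is precisely the location of the non-degenerate (colored) component and the magnitude of $\phi$, since such a component carries only two special points and is contracted under stabilization. These are recovered from the $\rho_{ij}$: evaluating at the unique component $\ol C_\alpha$ on which $z_{\alpha 0},z_{\alpha i},z_{\alpha j}$ are distinct, the relation $\rho_{ij}=(\phi(1)(z_j-z_i))^{-1}$ determines $\phi(1)$ there from the already-known difference $z_j-z_i$, while the degenerate values $\rho_{ij}\in\{0,\infty\}$ record whether the scaling along the path to $\ol C_\alpha$ is infinite or zero (cf.\ \eqref{norm} and the normalization of $\rho_{ij}$). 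Reading off the finite nonzero values thus pins down where the colored component sits and with what volume form, so $\ul{\phi}=\ul{\phi}'$ and $S\cong S'$.

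For surjectivity I will use compactness together with the interpretation of $\ol{A}_{n,1}(\C)$ as the closure of the image of the open stratum $M_{n,1}(\C)$. The space $\ol{M}_{n,1}(\C)$ is sequentially compact in its Gromov topology (Gromov compactness for stable genus-zero curves, the scaling datum adding only a $[0,\infty]$-valued parameter per component), so its image under the continuous map $\rho_{n,1}$ is compact, hence closed. This closed set contains $\rho_{n,1}(M_{n,1}(\C))$ and therefore its closure $\ol{A}_{n,1}(\C)$; conversely every boundary point maps, by continuity and density of the interior, to a limit of interior images and so lies in $\ol{A}_{n,1}(\C)$. Hence the image equals $\ol{A}_{n,1}(\C)$. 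Coherence of the per-component scaling data produced in the injectivity step is automatic in this direction, since the preimages are genuine curves; indeed the defining relations $\rho_{jklm}=(\rho_{ijkm}-1)/(\rho_{ijkm}-\rho_{ijkl})$ and $\rho_{jk}=\rho_{ij}/\rho_{ijk0}$ are exactly the coherence conditions that hold on the image.

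The main obstacle is the boundary analysis in the injectivity step: verifying, stratum by stratum over the colored combinatorial types, that the recipe ``evaluate $\rho_{ij}$ at the unique component where $z_{\alpha 0},z_{\alpha i},z_{\alpha j}$ are distinct,'' together with the normalizations and the relation $\rho_{jk}=\rho_{ij}/\rho_{ijk0}$, unambiguously locates the colored component (including the bivalent components that the ordinary cross-ratios cannot see) and determines the volume form in every degenerate configuration. In the interior $M_{n,1}(\C)$ this is a one-line computation, but on the boundary one must check that the transitions between finite, zero, and infinite values of $\rho_{ij}$ are forced to match the infinite versus zero scaling of the corresponding components, which is where the bulk of the bookkeeping lies.
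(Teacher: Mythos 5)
Your injectivity argument is essentially the one the paper gives: the $\rho_{ijkl}$ pin down the stabilized curve via Theorem~\ref{injection_n}, and the $\rho_{ij}$ recover the colored components and the volume forms. That part is fine, although the ``stratum by stratum'' bookkeeping you defer is real work (it is the content of the paper's Lemma~\ref{refine_lemma}, which describes exactly how the values $0$, finite nonzero, and $\infty$ of the $\rho_{ij}$ propagate along the tree).

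The gap is in surjectivity. You deduce that the image of $\rho_{n,1}$ is closed from compactness of $\ol{M}_{n,1}(\C)$ in a Gromov topology, and you identify $\ol{A}_{n,1}(\C)$ with the closure of $\rho_{n,1}(M_{n,1}(\C))$ using density of the interior. But in this development the topology on $\ol{M}_{n,1}(\C)$ is \emph{defined} by pulling back along $\rho_{n,1}$, and compactness of the moduli space together with closedness of its image are stated as \emph{consequences} of Theorem~\ref{bijection}; invoking them here is circular. To break the circle you would have to prove, independently, a sequential compactness theorem for stable scaled marked lines, continuity of all $\rho_{ijkl}$ and $\rho_{ij}$ with respect to that convergence, and density of the interior stratum (a gluing statement) --- none of which you supply, and which together amount to at least as much work as the direct route. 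The paper instead proves surjectivity constructively: given an arbitrary point of $\ol{A}_{n,1}(\C)$, it first builds the underlying stable $(n+1)$-marked curve from the $\rho_{ijkl}$ via Theorem~\ref{injection_n}, then uses Lemma~\ref{refine_lemma} to partition the vertices of its tree into the sets $V_0$, $V_f$, $V_\infty$ according to the values of the $\rho_{ij}$, and finally inserts colored components with the prescribed volume forms to produce an explicit preimage. Some version of that explicit reconstruction (or an independent compactness theorem) is needed; as written, your surjectivity step assumes what is to be proved.
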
 

The proof of the bijection is an extension of the corresponding result
for genus zero stable nodal $(n+1)$-pointed curves,
$\ol{{M}}_{n+1}(\C)$. In this case the cross-ratios $\rho_{ijkl}$
satisfy (\ref{recursion}). Let $\ol{A}_n(\C)$ denote the closure of
the algebraic variety defined by the cross-ratio coordinates and the
relation (\ref{recursion}). The image of the canonical embedding
$\rho_n(\ol{{M}}_{n+1}(\C))$ with cross-ratios is contained in
$\ol{A}_n(\C)$, the
map $\rho_n : \ol{{M}}_{n+1}(\C) \to \ol{A}_n(\C)$ is a bijection
\cite[Theorem D.4.5]{mcd-sal}.

\begin{proof}[Proof of Theorem \ref{bijection}]

First we show that $\rho_{n,1}$ is injective. Given a nodal stable
scaled marked line $(\ol{C},\ul{z},\phi)$, by construction the
combinatorial type uniquely determines which cross-ratios
$\rho_{ijkl}$ are $0,1$ or $\infty$, and which cross-ratios
$\rho_{ij}$ are $0$ and $\infty$.  In addition, the isomorphism class
of each component of $\ol{C}$ is determined by the cross-ratios
$\rho_{ijkl}$ with values in $\P^1(\C) \setminus \{0,1, \infty\}$ and
$\rho_{ij}$ with values in $\P^1(\C) \setminus\{0,\infty\}$, so the
map $\rho_{n,1}$ is injective.  To show that $\rho_{n,1}$ is
surjective, let $\mathbf{\rho} \in \ol{A}_{n,1}(\C)$. By the result
for stable curves \cite[D.4.5]{mcd-sal}, there is a unique stable,
nodal $(n+1)$-marked genus zero curve of combinatorial type given by a
rooted tree $T$ (the root corresponds to the marking $z_0$), which
realizes the cross-ratios of the form $\rho_{ijkl}$.

\begin{lemma}\label{refine_lemma}
Let $\alpha \in V(T)$, and suppose that for some $1\leq i <j \leq n$,
$z_{\alpha_i}, z_{\alpha_j}, z_{\alpha_0} $ are distinct at $\alpha$.
\begin{enumerate}
\item If $\rho_{ij} = 0$, then for every vertex $\beta \in V(T)$ in a
  path from $\alpha$ to the root (including $\alpha$ itself),
  $\rho_{kl} = 0$ for every distinct triple $z_{\beta_k},
  z_{\beta_l}, z_{\beta_0}$.
\item If $\rho_{ij}=\infty$, then for every vertex $\beta \in V(T)$
  in a path from $\alpha$ away from the root (including $\alpha$
  itself), $\rho_{kl}=\infty$ for every distinct triple $z_{\beta_k},
  z_{\beta_l}, z_{\beta_0}$.
\item If $0<|\rho_{ij}| < \infty$, then
\begin{enumerate}

\item for every other distinct triple $z_{\alpha_k}, z_{\alpha_l},
  z_{\alpha_0}$ on $\alpha$, $\rho_{kl} \notin \{0, \infty\}$;

\item for every vertex $\beta \in V(T)$ adjacent to $\alpha$ towards the root, and every distinct triple $z_{\beta_k}, z_{\beta_l}, z_{\beta_0}$,  $\rho_{kl} = 0$;

\item for every vertex $\beta \in V(T)$ adjacent to $\alpha$ away from
  the root, and every distinct triple $z_{\beta_k}, z_{\beta_l},
  z_{\beta_0}$, $\rho_{kl} = \infty$.
\end{enumerate}

\end{enumerate}
\end{lemma}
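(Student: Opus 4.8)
The plan is to deduce all three cases from two elementary propagation principles together with an induction along paths in $T$. The only tools needed are the relation $\rho_{jk}=\rho_{ij}/\rho_{ijk0}$ and the normalization (\ref{norm}) for $\rho_{ijkl}$, which records exactly when a cross-ratio degenerates to $0$, $1$, or $\infty$ according to which of the four points collide on the component where it is evaluated. First I would record the \emph{within-component principle}: fix a vertex $\gamma$ and suppose several of the points $z_{\gamma a},z_{\gamma b},z_{\gamma c},z_{\gamma 0}$ are pairwise distinct on $\gamma$; then each cross-ratio $\rho_{abc0}$ evaluated on $\gamma$ lies in $\P^1(\C)\setminus\{0,1,\infty\}$, so the relation shows that any two of the numbers $\rho_{kl}$ for pairs $k,l$ among these indices differ by multiplication by a finite nonzero factor. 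Consequently, among all distinct triples supported on a single component the values $\rho_{kl}$ are simultaneously $0$, simultaneously $\infty$, or simultaneously finite and nonzero. This already yields the first assertion of the case $0<|\rho_{ij}|<\infty$, and it supplies the base step $\beta=\alpha$ of the other two cases.

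Next I would establish the \emph{one-step principle} across an edge. Let $\beta$ be adjacent to $\alpha$ on the root side. On the component $\alpha$, every marking lying outside the $\alpha$-subtree is reached through the single root-side node, hence $z_{\alpha l}=z_{\alpha 0}$ for all such $l$; choosing $l$ in a branch of $\beta$ that meets the $\alpha$-branch exactly at $\beta$ (such an $l$ exists whenever $\beta$ supports a distinct triple at all) makes $\beta$ the evaluation component of $\rho_{jl}$ for any $j$ in the $\alpha$-subtree. Now evaluate $\rho_{ijl0}$ on $\alpha$ with $i,j$ in the $\alpha$-subtree: the collision $z_{\alpha l}=z_{\alpha 0}$ forces $\rho_{ijl0}=\infty$ by (\ref{norm}), so the relation gives $\rho_{jl}=\rho_{ij}/\rho_{ijl0}=\rho_{ij}/\infty$, which is $0$ whenever $\rho_{ij}$ is finite. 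This exhibits a vanishing cross-ratio among the distinct triples on $\beta$, and the within-component principle then forces all of them to vanish. The leaf-side case is dual: if $\beta$ lies away from the root, two markings $k,l$ of the $\beta$-subtree coincide on $\alpha$, so the relevant cross-ratio vanishes and $\rho_{kl}=\rho_{ik}/0=\infty$, where $\rho_{ik}$ is evaluated on $\alpha$ and is nonzero in the situations where this step is invoked.

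With these two principles in hand the lemma follows by induction along paths. In the case $\rho_{ij}=0$ the within-component principle gives the conclusion on $\alpha$ itself, and each step toward the root propagates a vanishing cross-ratio by the root-side principle; iterating along the path to the root completes this case. The case $\rho_{ij}=\infty$ is the mirror image: the within-component principle spreads the value $\infty$ over $\alpha$, and the leaf-side principle then propagates $\infty$ away from the root. In the case $0<|\rho_{ij}|<\infty$, the claim that every triple on $\alpha$ is again finite and nonzero is immediate, while the claims that $\rho_{kl}=0$ on the root-side neighbour and $\rho_{kl}=\infty$ on the leaf-side neighbour are precisely the two halves of the one-step principle applied to the immediate neighbours of $\alpha$. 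The part requiring the most care—and the main obstacle—is the positional bookkeeping in (\ref{norm}): one must match the abstract slot of the colliding pair to the correct degenerate value ($0$ versus $\infty$) and, crucially, choose the pivot index inside the appropriate subtree so that every quotient $\rho_{ij}/\rho_{ijk0}$ that arises is a determinate form ($0/\infty$ on the root side, $\text{nonzero}/0$ on the leaf side) rather than $0/0$ or $\infty/\infty$. Once the relations are set up this way, no indeterminate forms occur and the induction goes through.
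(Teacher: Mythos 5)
Your proposal is correct and follows essentially the same route as the paper: both arguments rest entirely on the relation $\rho_{jk}=\rho_{ij}/\rho_{ijk0}$ together with the normalization (\ref{norm}), first spreading a value among all distinct triples on a single component and then propagating it across an edge via the determinate forms $0/\infty$ (root side) and $\mathrm{nonzero}/0$ (leaf side), with an induction along paths. Your packaging into a ``within-component'' and a ``one-step'' principle is a clean reorganization of the paper's case-by-case computation, and the minor differences (e.g.\ evaluating the degenerate four-point cross-ratio on $\alpha$ rather than on the adjacent vertex) are immaterial since each cross-ratio is a single well-defined element of $\P^1(\C)$.
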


\begin{proof}
(a) First, we show that $\rho_{kl} = 0$ for every $k,l$ such that
  $z_{\alpha_k}, z_{\alpha_l}, z_{\alpha_0}$ are distinct.  Without
  loss of generality suppose that $z_{\alpha_k}$ is distinct from
  $z_{\alpha_j}$ and $z_{\alpha_0}$.  Then $\rho_{ijk0} \neq 0$ and so
  by (\ref{relation}), $\rho_{jk} = \rho_{ij}/\rho_{ijk0} = 0$.  Now
  without loss of generality suppose that $z_{\alpha_l}$ is distinct
  from $z_{\alpha_k}$ and $z_{\alpha_0}$.  Then $\rho_{jkl0} \neq 0$
  so by (\ref{relation}) $\rho_{kl} = \rho_{jk}/\rho_{jkl0} = 0/
  \rho_{jkl0} = 0$.  Now consider the vertex $\beta$ that is
  immediately adjacent to $\alpha$ in the direction of the root.
  Without loss of generality, suppose that $z_{\beta_k}$ is distinct
  from $ z_{\beta_i} = z_{\beta_j}$.  The combinatorics of $T$ at
  $\alpha$ and $\beta$ imply that $\rho_{ijk0} = \infty$, hence by
  (\ref{relation}), $\rho_{jk} = \frac{\rho_{ij}}{ \rho_{ijk0}} =
  0/\infty = 0$.  Applying the first argument that $\rho_{kl} = 0$
  for all $k,l$ with $z_{\beta_k}, z_{\beta_l}, z_{\beta_0}$ distinct.
  The result holds by remaining vertices in the path from $\alpha$ by
  induction.
(b)
First, we show that $\rho_{kl}=\infty$ for every $k,l$ such that $z_{\alpha_k}, z_{\alpha_l}, z_{\alpha_0}$ are distinct.  Without loss of generality suppose that $z_{\alpha_k}$ is distinct from $z_{\alpha_j}$ and $z_{\alpha_0}$. Then $\rho_{ijk0}\neq \infty$, hence by (\ref{relation}) $\rho_{jk} = \rho_{ij}/\rho_{ijk0} = \infty/\rho_{ijk0} = \infty$.  Now without loss of generality suppose that $z_{\alpha_l}$ is distinct from $z_{\alpha_k}$ and $z_{\alpha_0}$.  Then $\rho_{jkl0} \neq \infty$ hence by (\ref{relation}) $\rho_{kl} = \rho_{jk}/\rho_{jkl0} = \infty/\rho_{jkl0} = \infty$.  Now consider a vertex $\beta$ that is immediately adjacent to $\alpha$ away from the root.  It is now enough to show that $\rho_{mn}=\infty$ for some $m,n$ such that $z_{\beta_m}, z_{\beta_n}$ and $z_{\beta_0}$ are distinct. Pick $k$ and $l$ such that $z_{\alpha_k}, z_{\alpha_l}, z_{\alpha_0}$ are distinct (hence by the previous argument $\rho_{kl} = \infty$), and such that $\beta$ is adjacent to $\alpha$ through a node that identifies $z_{\alpha_l}$ with $z_{\beta_0}$. Now let $z_{\beta_m}$ be distinct from $z_{\beta_l}$ and $z_{\beta_0}$. Then $\rho_{klm0} \neq \infty$, so by (\ref{relation}), $\rho_{lm} =\rho_{kl}/\rho_{klm0} = \infty/\rho_{klm0} = \infty$.

(c) {\it Proof of (i):} If $z_{\alpha_k}$ is distinct from
$z_{\alpha_i}, z_{\alpha_j}$ and $z_{\alpha_0}$, then $\rho_{ijk0}
\notin \{0,1,\infty\}$ so $\rho_{jk} = \rho_{ij}/\rho_{ijk0}$ hence
$0 < |\rho_{jk}| < \infty$. Repeating this argument implies that $0<
|\rho_{kl}| < \infty$ for any $k$ and $l$ such that $z_{\alpha_k},
z_{\alpha_l}, z_{\alpha_0}$ are distinct.  {\it Proof of (ii):} In
light of (a) and the proof of (c)(i), it is enough to prove that for
any $k$ such that $z_{\beta_j}, z_{\beta_k}$ and $z_{\beta_0}$ are
distinct, then $\rho_{jk} = 0$. Note that since $\beta$ is closer to
the root than $\alpha$, $z_{\beta_i} = z_{\beta_j}$. Hence,
$\rho_{ijk0} = \infty$, and by (\ref{relation}), $\rho_{jk} =
\rho_{ij}/\rho_{ijk0} = 0$.  {\it Proof of (iii):} In light of (b)
and (c)(i), it is enough to prove the following case: if $\alpha$ is
incident to $\beta$ in such a way that $z_{\beta_i}=z_{\beta_0}$ is
distinct from $z_{\beta_j}$ and $z_{\beta_k}$, then $\rho_{jk} =
\infty$. In this case, $\rho_{ijk0} = 0$, so (\ref{relation}) implies
$\rho_{jk} = \rho_{ij}/\rho_{ijk0} = \infty$.

\end{proof}

By Lemma \ref{refine_lemma}, the vertices of the tree $T$ can be
partitioned into subsets for which the cross-ratios $\rho_{ij}$
defined on them are 0, $\infty$, or finite non-zero. Let
\begin{eqnarray*}
V_0 & := & \{ \alpha \in V(T) \lvert z_{\alpha_i}, z_{\alpha_j},
z_{\alpha_0} \mbox{ are distinct and } \ \rho_{ij} = 0\},\\
V_f & := & \{ \alpha \in V(T) \lvert z_{\alpha_i}, z_{\alpha_j},
z_{\alpha_0} \mbox{ are distinct and } \ 0< |\rho_{ij} | < \infty\},\\
V_\infty & := & \{ \alpha \in V(T) \lvert z_{\alpha_i}, z_{\alpha_j},
z_{\alpha_0} \mbox{ are distinct and } \ \rho_{ij} =\infty\}.
\end{eqnarray*}
If $V_0$ is empty, turn the marked point $z_0$ into a nodal point and
attach it to the nodal point $\zeta$ of a scaled curve
$(\ol{C}^\prime,z_0, \zeta, \phi)$.  If $V_0$ is non-empty, by Lemma
\ref{refine_lemma} it must be a connected sub-tree which includes the
component containing the root $z_0$.  If a marked point $z_i$, $i=1,
\ldots, n$ is on a component labeled by $\alpha \in V_0$, turn the
marked point $z_i$ into a nodal point $z_{\alpha_i}$ and attach it to
the nodal point $\zeta$ of a scaled curve $(\ol{C},\zeta, z_i, v)$.
If $\alpha \in V_f$ then by Lemma \ref{refine_lemma} it is attached by
a node to $V_0$.  Suppose that $z_{\alpha_i}, z_{\alpha_j}$ and
$z_{\alpha_0}$ are distinct and $0 < |\rho_{ij} | <\infty$.  Identify
this sphere component and its markings with a stable marked curve with
the same markings, and a volume form $\phi$ determined by
parametrizing $z_{\alpha_0} = \infty, z_{\alpha_i} = 0,
z_{\alpha_j}=1$ and putting $1/\phi(1) = \rho_{ij}$.  Finally, suppose
that $\alpha \in V_{\infty}$ is connected by a nodal point
$z_{\alpha_0}$ to a nodal point $z_{\beta_i}$ of $V_0$.  Then insert a
stable marked curve $(\ol{C}, \zeta_0, \zeta_1, \phi)$ such that the
node identifications are $\zeta_0$ with $z_{\beta_i}$, and $\zeta_1$
with $z_{\alpha_0}$.

At the end of this process one obtains a stable nodal, marked scaled curve in $\ol{M}_{n,1}(\C)$, whose  combinatorial type is a colored tree refining the tree $T$, and whose image under the cross-ratio embedding is the same as the original point $\mathbf{\rho} \in \ol{A}_{n,1}(\C)$.
\end{proof}

Given the combinatorial type of a stable scaled curve, one can choose
a local chart of cross-ratios according to the same prescription as
given in the real case.

\begin{figure}[h]
\includegraphics[height=2in]{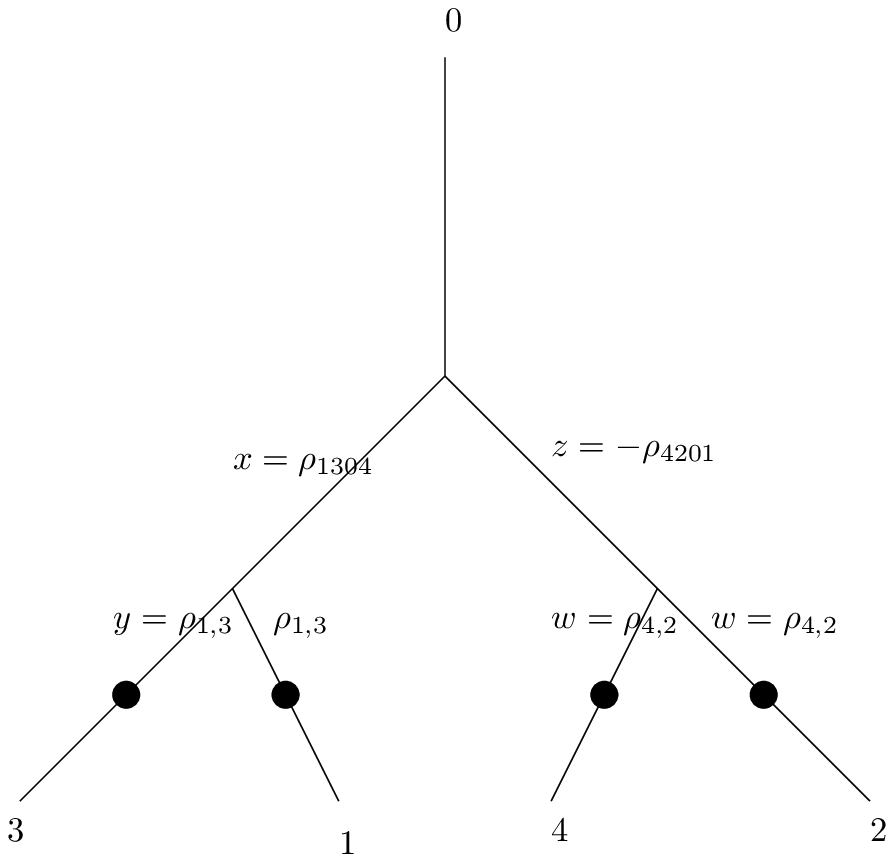}
\caption{A cross-ratio chart in $\ol{M}_{4,1}(\C)$.  The relation is $xy = zw$, which gives a toric singularity.} 
\label{complex_chart}
\end{figure}

The subset $G({T}) = X({T}) \cap \Hom(\Edge({T}),\C^*) $ of points
with non-zero labels is the kernel of the homomorphism
$ \Hom(\Edge({T}),\C^*) \to \Hom(\Vert^-({T}),\C^*) $
given by taking the product of labels from the given vertex to the
colored vertex above it, and is therefore an algebraic torus.  The
torus $G({T})$ acts on $X({T})$ by multiplication with a dense orbit.
Choose a planar structure on $T$.  Let
$$ \phi_T: \ol{M}_{n,1,\leq T}(\C) \to X(T) $$
denote the map given by the simple ratios in Definition
\eqref{simpleratios} (now allowed to be complex).  After re-labelling
it suffices to consider the case that the ordering is the standard
ordering.  We denote by $X^*(T)$ the Zariski open subset of $X(T)$
defined by the equations $1 + x_{i+1}/x_i + \ldots + x_{j}/x_i = (z_i
- z_j)/(z_i - z_{i+1}) = 0$, for $1 \leq i < j \leq n$.

\begin{theorem}  \label{toricprop} 
$\phi_T$ is an isomorphism of $\ol{M}_{n,1,\leq T}(\C)$ onto $X^*(T)$.
\end{theorem}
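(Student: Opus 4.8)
The plan is to deduce the theorem from two ingredients already in place: the purely algebraic identities relating simple ratios to cross-ratios (established over $\R$ in the proof of Proposition~\ref{maxhomeo}, and valid verbatim over $\C$) together with the bijection of Theorem~\ref{bijection}. Throughout $T$ is the fixed maximal colored tree and I write $\zeta = (\zeta_e)_{e\in\Edge(T)}$ for the simple ratios produced by \eqref{simpleratios} and $\rho=(\rho_e)_{e\in\Edge(T)}$ for a chart of cross-ratios covering $\ol{M}_{n,1,\leq T}(\C)$, each $\rho_e$ being of the form $\rho_{ijk0}$ or $\rho_{ij}$. The basic fact I would record first is that the computation of Proposition~\ref{maxhomeo} is entirely rational: for each edge $e$ one has $\rho_e = \zeta_e\, f_e(\zeta)$, where $f_e(\zeta)$ is one of the partial sums $1 + x_{i+1}/x_i + \cdots$, a polynomial in the $\zeta$'s, and where $X^*(T)$ is by definition precisely the open locus of $X(T)$ on which all these factors $f_e$ (equivalently all the expressions $(z_i-z_j)/(z_i-z_{i+1})$) are nonzero.

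The first claim is that $\phi_T$ is a morphism landing in $X^*(T)$. That its image consists of balanced labellings is the same path-independence argument as in Proposition~\ref{maxhomeo}: the product of simple ratios along a path from a vertex to a colored vertex above it computes the intrinsic scaling $\phi$ of the corresponding component, hence is independent of the chosen colored vertex. That the image lies in $X^*(T)$ is automatic for a genuine curve, since on the component realizing a given edge the marked and nodal points appearing in $f_e$ are distinct, so the coordinate differences computing $f_e$ do not vanish. Algebraicity follows by inverting $\rho_e = \zeta_e f_e(\zeta)$ to express each $\zeta_e$ as a rational function of the cross-ratios, which are the coordinates giving $\ol{M}_{n,1,\leq T}(\C)$ its variety structure through Theorem~\ref{bijection}.

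Next I would construct the inverse on $X^*(T)$. Given $\zeta\in X^*(T)$, define $\rho_e := \zeta_e f_e(\zeta)$ and extend to all cross-ratios $\rho_{ijkl},\rho_{ij}$ via the recursion together with the relation $\rho_{jk}=\rho_{ij}/\rho_{ijk0}$; balancing of $\zeta$ guarantees consistency, so the resulting tuple lies in $\ol{A}_{n,1}(\C)$. By Theorem~\ref{bijection} there is a unique stable scaled marked curve $S$ realizing these cross-ratios, and on $X^*(T)$ one has $\rho_e=0 \iff \zeta_e=0$ (because $f_e\neq 0$ there), so the type of $S$ is obtained from $T$ by contracting exactly the edges with $\zeta_e=0$; thus $S\in\ol{M}_{n,1,\leq T}(\C)$. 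This assignment $\zeta\mapsto S$ is regular, and the identities $\rho_e=\zeta_e f_e(\zeta)$ show it is a two-sided inverse to $\phi_T$, which in particular gives surjectivity onto $X^*(T)$.

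The step I expect to be the main obstacle is exactly the reconstruction and the role of the open condition cutting out $X^*(T)$. Over $\R_{\ge 0}$ the factors $f_e$ are sums of non-negative terms containing a $1$, hence strictly positive, which is why Proposition~\ref{maxhomeo} yields a homeomorphism onto \emph{all} of $X(T)$; over $\C$ a balanced labelling may have some $f_e(\zeta)=0$, and such a labelling corresponds to no genuine curve, since the reconstructed marked points would collide incompatibly with the type $\leq T$. The crux is therefore to check that nonvanishing of the partial sums $f_e$ is \textbf{precisely} the condition ensuring $\rho_e=\zeta_e f_e(\zeta)$ vanishes only along contracted edges, so that Theorem~\ref{bijection} returns a curve of type $\leq T$ rather than a more degenerate one, and that the map $\zeta\mapsto S$ is regular and not merely bijective. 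Once this is established, both composites are the identity by the matching identities $\rho_e=\zeta_e f_e(\zeta)$, completing the proof that $\phi_T$ is an isomorphism onto $X^*(T)$.
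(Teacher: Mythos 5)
Your proposal is correct and follows essentially the same route as the paper: both rest on the identity $\rho_e = \zeta_e f_e(\zeta)$ established in Proposition \ref{maxhomeo} and on the observation that reconstructing a curve from a balanced labelling succeeds exactly where the partial sums $1 + x_{i+1}/x_i + \cdots$ are nonzero, which is the defining condition of $X^*(T)$. The only cosmetic difference is that you obtain surjectivity by feeding the synthesized cross-ratios into Theorem \ref{bijection}, whereas the paper reconstructs the nodal scaled curve directly from the labelling and then notes, as you do, that the exceptional locus is an affine subvariety of $X(T)$ avoiding the origin.
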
    

\begin{proof}  Let $T$ be a maximal colored tree 
and consider the map $ \ol{M}_{n,1,\leq T} \to X(T)$ given by the
simple ratios.  The same argument as in the real case shows that any
$\lambda \in X(T)$ is in the image of some quilted disk unless at some
stage the reconstruction procedure assigns the same position to two
markings $i,j$ in different branches; in this case we have $\lambda
\in X^*(T)$.  The set of exceptional points in $X(T)$ is an affine
subvariety of $X(T)$ disjoint from $0 \in X(T)$ hence the Theorem.
\end{proof}

\begin{corollary}  Let $T$ be a colored tree.  
There exists an isomorphism of a Zariski open neighborhood of
$M_{n,1,T}(\C) \times \{0 \}$ in $M_{n,1,{T}}(\C) \times X({T})$ with
a Zariski open neighborhood of $M_{n,1,{T}}(\C)$ in
$\ol{M}_{n,1}(\C)$.  Thus $\ol{M}_{n,1}(\C)$ is a projective variety
with at most toric singularities.
\end{corollary}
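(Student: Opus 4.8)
The plan is to establish the local product structure as the algebraic counterpart of Corollary \ref{nonmaxhomeo}, upgrading every homeomorphism there to an isomorphism of complex varieties by invoking Theorem \ref{toricprop} in place of Proposition \ref{maxhomeo}, and then to read off the two assertions (toric singularities and projectivity) from the resulting local model.

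First I would choose a maximal colored tree $T^m$ together with a morphism of trees $f: T^m \to T$, so that $T$ is obtained from $T^m$ by contracting the edges internal to the fibers $T^m_v := f^{-1}(v)$, $v \in V(T)$, and each $T^m_v$ is again a maximal colored tree. By Theorem \ref{toricprop} the simple-ratio map $\phi_{T^m}$ identifies the Zariski-open chart $\ol{M}_{n,1,\leq T^m}(\C)$ with $X^*(T^m)$. Restricting a balanced labelling of $T^m$ to each fiber $T^m_v$ and pushing it forward along $f$ via Lemma \ref{morphism} yields a morphism
$$ X(T^m) \to \Big( \prod_{v \in V(T)} X(T^m_v) \Big) \times X(T). $$
The key step is to verify that, after inverting the labels of the edges of $T$ (those edges of $T^m$ not contracted by $f$), this restricts to an isomorphism of the open subvariety $\{ \varphi \in X(T^m) : \varphi(e) \neq 0 \ \forall e \in \Edge(T) \}$ — which under $\phi_{T^m}$ corresponds precisely to $\ol{M}_{n,1,\leq T}(\C)$ — onto $\big( \prod_{v} G(T^m_v) \big) \times X(T)$, where $G(T^m_v) \subset X(T^m_v)$ is the dense-torus locus of nonzero labels (with its collision locus removed). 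Since each $T^m_v$ is maximal, Theorem \ref{toricprop} identifies $\prod_v G(T^m_v)$ with the product of the open strata of the $v$-components, which is the smooth variety $M_{n,1,T}(\C)$, while the remaining factor $X(T)$ records the normal smoothing directions and its fixed point $0 \in X(T)$ corresponds to $M_{n,1,T}(\C)$ itself. This gives the asserted isomorphism of a neighborhood of $M_{n,1,T}(\C)$ in $\ol{M}_{n,1}(\C)$ with a neighborhood of $M_{n,1,T}(\C) \times \{0\}$ in $M_{n,1,T}(\C) \times X(T)$.

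For the two consequences: the stratum $M_{n,1,T}(\C)$ is a product of configuration spaces of marked genus-zero curves and scaled lines, hence smooth, whereas $X(T)$ is an affine toric variety — it is cut out in the affine space $\Hom(\Edge(T),\C)$ by the binomial relations expressing the balanced condition, and carries the dense torus $G(T)$ acting with a dense orbit, as recorded before Theorem \ref{toricprop}. A product of a smooth variety with an affine toric variety has at most toric singularities, and since the strata $M_{n,1,T}(\C)$ cover $\ol{M}_{n,1}(\C)$ and each admits such a neighborhood, $\ol{M}_{n,1}(\C)$ has at most toric singularities everywhere. Projectivity follows from Theorem \ref{bijection}: the bijection $\rho_{n,1}$ identifies $\ol{M}_{n,1}(\C)$ with $\ol{A}_{n,1}(\C)$, the closure of an algebraic subvariety of $(\P^1(\C))^N$, which is a closed subvariety of a projective variety and hence projective.

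The main obstacle is the middle step, namely verifying that the factorization map is a genuine isomorphism of varieties and not merely a bijection of points. Concretely, one must show that localizing the binomial coordinate ring of $X(T^m)$ at the variables indexed by $\Edge(T)$ splits it as a tensor product of the coordinate rings of the fiber pieces $G(T^m_v)$ and of $X(T)$; equivalently, that after inverting those labels the balanced relations of $T^m$ decouple into the balanced relations internal to each fiber together with those of the contracted tree $T$. This is a purely toric monoid computation, but it is where the balanced condition must be used delicately. The one feature special to the complex case is the passage from $X$ to the open locus $X^*$ removing the collision subvariety, where two markings in distinct branches coincide; I would check that this locus meets only the along-stratum factors $G(T^m_v)$ and is disjoint from the central fiber $M_{n,1,T}(\C) \times \{0\}$, so that the full factor $X(T)$ survives and $0 \in X(T)$ indeed corresponds to $M_{n,1,T}(\C)$.
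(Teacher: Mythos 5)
Your proposal follows essentially the same route as the paper, whose proof of this corollary consists only of the remark that the argument of Corollary \ref{nonmaxhomeo} carries over to the complex setting; your write-up is precisely that adaptation, with Theorem \ref{toricprop} replacing Proposition \ref{maxhomeo} and the factorization map $X(T^m) \to \bigl( \prod_{v} X(T^m_v) \bigr) \times X(T)$ playing the same role. The points you flag as needing care --- the decoupling of the balanced (binomial) relations after localization, and the fact that the collision locus removed in $X^*$ misses the central fibre --- are exactly the details the paper leaves to the reader.
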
 

The proof is similar to the real case in Corollary \ref{nonmaxhomeo}
and left to the reader.  This completes the proof of Theorem 1.2 in
the introduction.

\addtocontents{toc}{\protect\enlargethispage*{1000pt}}

\end{document}